\documentclass[a4paper,11pt]{article}

\usepackage[left=3.1cm, right=3.1cm, top=3cm, bottom=2.75cm]{geometry}

\usepackage[english]{babel}
\usepackage[utf8]{inputenc}
\usepackage[T1]{fontenc}

\usepackage{amsfonts}
\usepackage{mathrsfs}
\usepackage{bbm}
\usepackage{xcolor}
\usepackage{latexsym}
\usepackage{mathdots}
\usepackage{amssymb}
\usepackage{mathtools}
\usepackage{wasysym}

\usepackage{enumitem}
\setlist{nolistsep}
\usepackage{amsthm}
\usepackage[backend=biber,
    bibencoding=utf8,
    style=numeric, 
    giveninits=true,
    isbn=false,
    doi=true,
    url=false,
    clearlang=true,
    defernumbers=true,
]{biblatex}
\usepackage{csquotes}
\usepackage{xcolor}
\usepackage[linktocpage=true]{hyperref}
\usepackage[capitalize]{cleveref}
\hypersetup{citecolor = black,colorlinks,
    		linkcolor = black,
			urlcolor = blue}

\newtheoremstyle{plain}{3mm}{3mm}{\slshape}{}{\bfseries}{.}{.5em}{}
\newtheoremstyle{claim}{3mm}{3mm}{}{}{\itshape}{.}{.5em}{}
\newtheoremstyle{definition}{2mm}{2mm}{}{}{\bfseries}{.}{.5em}{}
\theoremstyle{plain}
	
\newtheorem{theorem}{Theorem}
\numberwithin{theorem}{section}
\newtheorem{lemma}[theorem]{Lemma}
\newtheorem{proposition}[theorem]{Proposition}

\theoremstyle{claim}

\theoremstyle{definition}

\newtheorem{remark}[theorem]{Remark}

\newcommand{\N}{\mathbb{N}}
\newcommand{\Z}{\mathbb{Z}}
\newcommand{\R}{\mathbb{R}}
\newcommand{\C}{\mathbb{C}}

\newcommand{\T}{\mathbb{T}}
\renewcommand{\P}{\mathbb{P}}  
\newcommand{\one}{\boldsymbol{1}}  

\title{A Joint Local Limit Theorem for Prime Divisor Counting Functions}
\author{Jovan Andreevski}
\date{\today}
\begin{document}
\maketitle

\begin{abstract}
\noindent
We establish a uniform two-dimensional local limit theorem for the joint distribution of $\Omega(n)$ and $\Omega(n+1)$. More precisely, along a set of scales of logarithmic density one, the normalized joint counts are uniformly approximated by the density of two independent Gaussian random variables, each with mean and variance $\log\log x$, up to a power-saving error. The proof builds upon a recent circle-method framework of Tao and Teräväinen. Our results equally hold for the additive function $\omega$. Additionally, we explore applications of our local limit theorem in the setting of uniquely ergodic systems.
\end{abstract}
\tableofcontents

\section{Introduction}\label{sec:introduction}

A central theme in probabilistic number theory is the study of the statistical behaviour of additive functions. Among the most extensively studied additive functions are the function counting the number of distinct prime divisors of a natural number $n$ 
\begin{align}
    \omega(n) := \sum_p \one_{p|n},
\end{align}
alongside the function counting the number of prime divisors of $n$ with multiplicity
\begin{align}
    \Omega(n) := \sum_{p} \sum_{j \geq 1} \one_{p^j|n} = \sum_{p} v_p(n),
\end{align}
where $v_p(n) := \sum_{j \geq 1} \one_{p^j|n}$ is the \(p\)-adic valuation of $n$.

Although these functions fluctuate considerably, their values obey remarkably regular statistical laws. Hardy and Ramanujan~\cite{HardyRamanujan1917} proved that both have normal order $\log_2n := \log\log n$, meaning that for every $\varepsilon >0$ and $\nu \in \{\omega, \Omega\}$,
\begin{align}
\frac{1}{x} \cdot
\#\left\{
n\leq x:
\left|\nu(n)-\log_2n\right|
>
\varepsilon\log_2n
\right\}
\longrightarrow 0,
\qquad
\text{as} \,\, x \to \infty.
\end{align}
The Erd\H{o}s--Kac theorem~\cite{ErdosKac1940} identifies the fluctuations around
this normal order. The theorem shows that, for every
$t\in\R$ and $\nu\in\{\omega,\Omega\}$,
\begin{align}
\label{eq:erdos-kac}
 \frac1x \cdot
 \#\left\{n\leq x:\frac{\nu(n)-\log_2x}{\sqrt{\log_2x}}\leq t\right\}\
  \to \frac1{\sqrt{2\pi}}\int_{-\infty}^{t}e^{-u^2/2}\,\mathrm du,
\qquad
\text{as} \,\, x \to \infty.
\end{align}
Therefore, the Erd\H{o}s--Kac theorem shows that, after centering and normalization, $\omega(n)$ and $\Omega(n)$ have the same Gaussian limiting behaviour as a sum of genuinely independent random variables.


The Erd\H{o}s–Kac theorem was subsequently refined and extended in several directions. In their original paper, Erd\H{o}s and Kac already established \cref{eq:erdos-kac} for a wide range of additive functions, and subsequently, Kubilius \cite{Kubilius1955} and Shapiro \cite{Shapiro1956} extended it to an even broader class of strongly additive functions. Rényi and Turán \cite{RenyiTuran1958} obtained the optimal rate of convergence $O((\log_2 x)^{-1/2})$ in the classical Erd\H{o}s--Kac theorem. In a different direction, Erd\H{o}s \cite{Erdos1948} studied the number of integers having exactly $m$ distinct prime factors, obtaining asymptotic formulas valid uniformly for $m$ in the central range. Subsequently, Sathe and Selberg \cite{Sathe1953I,Sathe1953II,Sathe1954III,Sathe1954IV,Selberg1954} established more general uniform asymptotic formulas, valid over wider ranges of $m$. For a systematic account of these and related developments concerning the distribution of additive functions, we refer the reader to \cite{GranvilleSoundararajan2007, Kubilius1964, Tenenbaum2015}. 

A natural follow-up question concerns the joint statistical behaviour of $\nu(n)$ and $\nu(n+1)$. Due to the distinct multiplicative structures of consecutive integers, one expects the normalized quantities $\widetilde{\nu}_x(n) = (\nu(n) - \log_2x)/ \sqrt{\log_2x}$ and $\widetilde{\nu}_x(n+1) = (\nu(n+1) - \log_2x)/ \sqrt{\log_2x}$ to become asymptotically independent as $x\to\infty$, suggesting a bivariate analogue of the Erd\H{o}s--Kac theorem. Such a result was first established by LeVeque \cite{LeVeque1949} for $\omega$, who proved convergence to a pair of independent standard Gaussian random variables, albeit without an explicit quantitative rate of convergence. Subsequently, Tanaka generalised LeVeque's result by establishing a multivariate Erd\H{o}s–Kac theorem for the joint distribution of $(\omega(\left|P_1(n)\right|),\ldots,\omega(\left|P_r(n)\right|))$, where $P_1(t),\ldots,P_r(t) \in \Z[t]$ are pairwise coprime irreducible polynomials \cite{Tanaka1955}. Quantitative estimates for such multivariate distributions were later obtained by Eda and Yamano \cite{EdaYamano1969}. More recently, Mangerel \cite{Mangerel2020} established a quantitative bivariate Erd\H{o}s--Kac theorem for $(\omega(n),\omega(n+1))$. See \cite{Kubilius1964} for an account of more general results concerning the joint distribution of shifted additive functions.  

The Erd\H{o}s--Kac theorem can be viewed as a central limit theorem for $\omega(n)$ and the results of LeVeque and Mangerel as bivariate central limit theorems for the pair $(\omega(n), \omega(n+1))$. Similarly, we can interpret the results of Erd\H{o}s, Sathe and Selberg as local limit theorems for $\omega(n)$. Naturally, this leads to the question whether we can establish a bivariate local limit theorem for $(\omega(n), \omega(n+1))$. More precisely, we seek a uniform asymptotic formula for the normalized joint counts
\begin{align}
    \label{eq:joint-local-frequency}
P_x^{\nu_0,\nu_1}(\mathbf m):= \frac1x \cdot 
\#\{n\leq x:
\nu_0(n)=m_0,\ \nu_1(n+1)=m_1\},
\end{align}
where $\mathbf{m} = (m_0, m_1) \in \Z^2$, and $\nu_0,\nu_1\in\{\omega,\Omega\}$.


Our main theorem answers this question affirmatively with a power-saving error term. For $t>0$ and $\mathbf m=(m_0,m_1)\in\Z^2$, set
\begin{equation}\label{eq:intro-Gaussian}
 G_t(\boldsymbol m)
 :=\frac1{2\pi t}
 \exp\!\left(-\frac{(m_0-t)^2+(m_1-t)^2}{2t}\right).
\end{equation}

\begin{theorem}[Joint Local Limit Theorem]\label{thm:intro-main}
There exist an absolute constant $c>0$ and a set $\mathcal X\subset\N$ such that
\begin{equation}\label{eq:intro-exceptional}
 \frac1{\log X}\sum_{\substack{n\leq X\\n\notin\mathcal X}}\frac1n
 \ll (\log\log X)^{-c}
\end{equation}
for all sufficiently large $X$, and
\begin{equation}\label{eq:intro-main}
 P_x^{\nu_0,\nu_1}(\mathbf m)
 =G_{\log_2x}(\boldsymbol{m})+O((\log_2x)^{-1-c})
\end{equation}
uniformly for all $x\in\mathcal X$, all $\boldsymbol{m} = (m_0,m_1)\in\Z^2$, and all $\nu_0,\nu_1\in\{\omega,\Omega\}$.
\end{theorem}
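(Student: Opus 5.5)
Our plan is to use Fourier inversion on $\T^2$. For $\boldsymbol\theta=(\theta_0,\theta_1)\in\T^2$ set
\begin{equation*}
 S_x(\boldsymbol\theta):=\frac1x\sum_{n\le x}e\bigl(\theta_0\nu_0(n)+\theta_1\nu_1(n+1)\bigr),
\end{equation*}
so that
\begin{equation*}
 P_x^{\nu_0,\nu_1}(\mathbf m)=\int_{\T^2}S_x(\boldsymbol\theta)\,e(-\mathbf m\cdot\boldsymbol\theta)\,d\boldsymbol\theta .
\end{equation*}
Likewise $G_t(\mathbf m)$ equals, up to an error $O(e^{-ct})$, the inverse Fourier transform of $\exp\bigl(t\sum_j(e(\theta_j)-1)\bigr)$. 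This is the characteristic function of two independent Poisson variables with parameter $t$. Its local limit theorem approximates $G_t$ to within $O(t^{-3/2})$, which is already acceptable.

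The task therefore reduces to comparing $S_x(\boldsymbol\theta)$ with the model
\begin{equation*}
 F_x(\boldsymbol\theta)=\prod_{j}F_{\nu_j}(\theta_j)\,(\log x)^{e(\theta_0)+e(\theta_1)-2},
\end{equation*}
where the $F_{\nu_j}$ are the Selberg--Delange constant factors, which are analytic in $\theta$. We will then show
\begin{equation*}
 \int_{\T^2}|S_x-F_x|\ll(\log_2x)^{-1-c}\qquad\text{for }x\in\mathcal X .
\end{equation*}

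We split $\T^2$ into a major region $\|\theta_0\|,\|\theta_1\|\le\delta$, where $\delta=(\log_2x)^{-1/2+\epsilon}$, and its complement. On the complement $|F_x|\le\exp(-c\delta^2\log_2x)$ is negligible, so the work is to show that $|S_x(\boldsymbol\theta)|$ is small there on average. Here we follow the Tao--Teräväinen circle-method framework for correlations of multiplicative functions at logarithmic scales. For fixed $\boldsymbol\theta$, the functions $g_j(n)=e(\theta_j\nu_j(n))$ are bounded multiplicative functions, and $S_x$ is a two-point correlation $\frac1x\sum g_0(n)g_1(n+1)$. Tao's entropy decrement argument, in the form sharpened by Tao--Teräväinen, evaluates such correlations with logarithmic weights, and converts to natural density for all $x$ outside a set $\mathcal X^c$ of logarithmic density $(\log_2X)^{-c}$. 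In that evaluation the correlation factorizes approximately into products of local mean values of $g_0$ and $g_1$. By Halász-type bounds these are governed by $\exp\bigl(-\sum_{p\le x}(1-\mathrm{Re}\,e(\theta_j))/p\bigr)\approx(\log x)^{-(1-\cos2\pi\theta_j)}$, which is a power of $\log x$ and hence tiny away from $\theta_j=0$.

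On the major region we need an asymptotic rather than a bound. There we write
\begin{equation*}
 g_j=g_j^{\flat}\ast g_j^{\sharp},
\end{equation*}
where $g_j^\flat$ is supported on primes below $x^{\epsilon}$ and $g_j^\sharp$ on the large primes, and expand $g_j(n)$ as $\exp(\theta_j\cdot\text{small-prime sum})$ times a large-prime part. The small-prime parts for $n$ and $n+1$ become genuinely independent via the Chinese remainder theorem, since no prime divides both $n$ and $n+1$, and this yields exactly $F_x$ up to $O((\log x)^{-c})$ relative errors. The large-prime contribution is handled by the same Tao--Teräväinen correlation estimate, applied uniformly in $\boldsymbol\theta$ by a net argument with Lipschitz control. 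The claims for $\omega$ and $\Omega$ differ only in the local factors $F_{\nu_j}$, which are harmless near $0$.

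The main obstacle is the uniformity required: we need an error $(\log_2x)^{-1-c}$ after integrating over $\boldsymbol\theta$, with the same exceptional set $\mathcal X$ serving all $\boldsymbol\theta$ simultaneously. Our first approach is to discretize $\T^2$ into $(\log_2x)^{O(1)}$ points, using that $\partial_\theta S_x\ll\log_2x$ on average by Turán--Kubilius. We then take $\mathcal X$ to be the intersection of the good sets for each point; the union bound is affordable because the entropy decrement gives a logarithmic-density exceptional set of size $(\log_2X)^{-A}$ for any fixed $A$, at the cost of the final exponent $c$. Finally, Gaussian integration of $F_x$ over the major region yields $G_{\log_2x}(\mathbf m)+O((\log_2x)^{-3/2})$, uniformly in $\mathbf m$.
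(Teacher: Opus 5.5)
Your overall architecture matches the paper's: a circle method on $\T^2$, Tao--Ter\"av\"ainen on the minor arcs, and a small-prime model on the major arc. Your minor-arc treatment is sound. For $\max_i\|\theta_i\|_{\T}>\delta=(\log_2x)^{-1/2+\epsilon}$, Lemma~\ref{lem:nonpret} gives $M\gg(\log_2X)^{2\epsilon}$, so Theorem~\ref{thm:prelim-TT} saves $\exp(-c(\log_2X)^{2\epsilon})$ in both the bound and the exceptional set. A $(\log_2X)^{O(1)}$-point net with a union bound is then a legitimate substitute for the paper's Tonelli-then-Markov step. Note, though, that the theorem only gives $\mathcal L^{-c}$, not a factorization with decay $(\log x)^{-(1-\cos2\pi\theta_j)}$.

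The gap is on the major arc, where the actual content lies, and three of your claims there fail.
\begin{enumerate}
\item The small-prime parts of $n$ and $n+1$ are not independent. The events $p\mid n$ and $p\mid n+1$ are disjoint, so the limiting local law has $\mathbf P(v_p(n)=v_p(n+1)=0)=1-2/p$ rather than $(1-1/p)^2$. Hence the limit is not the product $\prod_jF_{\nu_j}(\theta_j)(\log x)^{e(\theta_j)-1}$. The discrepancy is a factor $1+O(|\theta_0\theta_1|)$, which is harmless here, but ``exactly $F_x$'' is false.
\item The Chinese remainder theorem cannot handle all primes below $x^{\epsilon}$ at once, since their product vastly exceeds $x$. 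You need a cutoff on the smooth parts plus the fundamental lemma. With $z$ a fixed power of $x$, this gives only an additive error $O(e^{-c\log x/\log z})$ in $S_x(\boldsymbol\theta)$, which does not decay with $x$. Integrated over a major arc of area $\asymp\delta^2\gg(\log_2x)^{-1}$, that bound is not even $o((\log_2x)^{-1})$. So your $O((\log x)^{-c})$ relative error is unsupported. You need $z=x^{1/s}$ with $s\to\infty$, e.g.\ $s\asymp\log_3x$, to make the sieve error $\ll(\log_2x)^{-A}$; the paper takes $z=x^{R^{-1/2}}$.
\item Theorem~\ref{thm:prelim-TT} cannot handle the large-prime contribution on the major arc. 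Its hypothesis $\exp M(g_1;\cdot)\gg\mathcal L\to\infty$ is non-pretentiousness. This fails for $e(\theta\nu)$ when $\|\theta\|_{\T}\ll(\log_2x)^{-1/2}$, and a fortiori for $e(\theta\nu_{>z})$, whose distance to $1$ is $O(\theta^2\log s)$.
\end{enumerate}

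In your setup the large primes are actually controlled by something much simpler: on your narrow arc they can be dropped trivially. Indeed,
$\bigl|e(\theta_0\nu_0(n)+\theta_1\nu_1(n+1))-e(\theta_0\nu_0(n_{\le z})+\theta_1\nu_1((n+1)_{\le z}))\bigr|\ll\|\boldsymbol\theta\|\bigl(\nu_{0,>z}(n)+\nu_{1,>z}(n+1)\bigr)$, whose average over $n\le x$ is $\ll\delta\log s$. The total cost after integrating over the arc is therefore $\ll\delta^3\log s$, which is acceptable when the exponent $\epsilon$ in $\delta$ is below $1/6$. Together with the sieve transfer to the correct two-point model (Proposition~\ref{prop:low-sieve-transfer}, sieve density $2/p$) and a bivariate local limit theorem for that model (Proposition~\ref{comparison-models} and Theorem~\ref{thm:2d-llt}), this would give a complete proof.

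This is the ``enlarge $R$ to $(\log_2X)^{\theta}$ with $\theta>2/5$'' option that the paper mentions and declines in Section~\ref{subsubsec:low-truncation}. The paper instead keeps $R=(\log_2X)^{1/4}$, where the crude truncation loss $R^{-5/2}$ is too large. It recovers the missing saving by retaining the weights $W_R$ through the sieve transfer and applying the concentration bound~\eqref{eq:concentration-estimate} (Proposition~\ref{prop:truncation-main}).

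As written, your proposal lacks both the growth condition on $s$ and a valid large-prime mechanism. It instead attributes the major-arc asymptotic to a false independence, an error rate the sieve does not provide, and a correlation estimate whose hypothesis fails there.
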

We note that, in the central range, the Gaussian main term is of order $(\log_2 x)^{-1}$, so the error term in Theorem \ref{thm:intro-main} represents a power saving relative to the main term. Our proof allows us to take $c = 1/16$.

The proof of our result builds upon a recent advance due to Tao and Ter\"av\"ainen~\cite{TaoTeravainen2025} on conjectures of Erd{\H{o}}s, Pomerance and S{\'a}rk{\"o}zy \cite{ErdosSarkozyPomerance1985} and of Hildebrand
\cite{Hildebrand1987}, concerning the proportion of consecutive integers having the same number of prime factors (with or without multiplicity) or the same number of divisors. In \cite[Theorem 1.7]{TaoTeravainen2025}, they show that there exists a set $\mathcal X\subset\mathbb N$ of logarithmic density one such that, for all $x\in\mathcal X$, one has
\begin{equation}\label{eq:intro-TT}
 \frac1x \cdot \#\{n\leq x:\nu(n)=\nu(n+1)\}
 =\frac{1+O((\log_2x)^{-c})}{2\sqrt{\pi\log_2x}},
 \qquad \nu\in\{\omega,\Omega\},
\end{equation}
for some absolute constant \(c>0\). Their theorem also establishes an analogous result for the divisor function $\tau$. We remark that in \cite[Remark 4.2]{TaoTeravainen2025}, Tao and Ter\"av\"ainen suggest the possibility that their approach could be extended to obtain a two-dimensional local limit theorem as the one we establish.

The approach in \cite{TaoTeravainen2025} uses the circle method to separate probabilities such as $\mathbf{P}(\Omega(\mathbf{n}) = \Omega(\mathbf{n+1}))$ into a "low-frequency" and a "high-frequency" term. The low-frequency term is handled using sieve theory and a probabilistic model for small prime factors, whereas the high-frequency term is controlled by a new quantitative correlation estimate for multiplicative functions \cite[Theorem 3.1]{TaoTeravainen2025}. 

Our proof broadly follows the same approach. In our case, handling the high-frequency term is a rather straightforward technical adaptation of \cite{TaoTeravainen2025}. The main contribution occurs in the handling of the low-frequency term, where we establish a bivariate local limit theorem that handles all four possible pairs $(\nu_0, \nu_1) \in \{\omega, \Omega\}^2$. Moreover, we require several quantitative refinements in order to ensure precision in the error term of the required $(\log_2x)^{-1}$ scale. 

Theorem~\ref{thm:intro-main} relates to and extends several previous results. Recall that we write $P_x^{\nu_0,\nu_1}(\mathbf m)$ for the normalized joint counts in \cref{eq:joint-local-frequency}. 

First, we notice that our result recovers the result of Tao and Ter\"av\"ainen. Taking $m_1 = m_0 = m$ in our theorem and summing, we obtain for $\nu \in \{\omega, \Omega\}$
\begin{align*}
    \frac1x \cdot \#\{n\leq x:\nu(n)=\nu(n+1)\} = \sum_{m \geq 0} P_x^{\nu,\nu}(m, m) 
\end{align*}
Splitting the sum in the central range $\left| m - \log_2x \right| < (\log_2x)^{\frac{c+1}{2}}$ and its complement, where $0 < c < 1/2$ is the constant supplied by Theorem \ref{thm:intro-main}, yields
\begin{align*}
    \sum_{m \geq 0} P_x^{\nu,\nu}(m, m) = \frac{1}{2\pi \log_2x}
\sum_{m\in\mathbb{Z}} e^{-(m-\log_2x)^2/\log_2x}
+ O\left((\log_2x)^{-1/2-c/2}\right),
\end{align*}
where the contribution from the complimentary range is shown to be negligible by higher-moment estimates of $\omega$ and $\Omega$ \cite[Theorem 1]{GranvilleSoundararajan2007}, and the use of Markov's inequality. Finally, evaluating the sum using Poisson summation (see Section \ref{subsec:prelim-Fourier}) gives the asymptotic formula in \cref{eq:intro-TT}. 

Furthermore, the argument for consecutive integers can be adapted without significant modifications to obtain a local limit theorem for $n$ and $n+a$, with $a$ being a fixed shift. Theorem~\ref{thm:intro-main} also provides a local counterpart to the results of LeVeque and Mangerel. Related joint upper bounds of the correct order of magnitude were obtained by Goudout \cite[Th\'eor\`eme~3]{Goudout2017}. In the central range, our theorem sharpens these bounds to a uniform asymptotic formula with an explicit Gaussian main term and a power-saving relative error.


Our result has consequences for asymptotic independence as well.
Charamaras and Richter
\cite[Theorem~A and Appendix~A]{CharamarasRichter2024} proved that, for any bounded functions $a,b:\mathbb N\to\mathbb C$,
\begin{align*}
 \frac1{\log N}\sum_{n\le N}
 \frac{a(\Omega(n))b(\Omega(n+1))}{n}
 =\left(\frac1N\sum_{n\le N}a(\Omega(n))\right)
  \left(\frac1N\sum_{n\le N}b(\Omega(n))\right)+o(1).
\end{align*}
In fact, their work provides quantitative versions of this assertion and yields the optimal error term $O ((\log_2N)^{-1/2})$. A subsequent result of Wang \cite[Theorem~1.9]{Wang2026Averaged} generalises this result to bounded functions $a :\mathbb N^2\to\mathbb C$, with a weaker error term. Our theorem recovers both of these results. In fact, we are able to obtain an analogous asymptotic independence result along Cesàro averages, restricted to a set of scales $\mathcal{X}$ of logarithmic density one. More precisely, for every bounded function $a:\mathbb N^2\to\mathbb C$, we have
\begin{align*}
    \frac{1}{N}\sum_{n\le N} a(\nu_0(n),\nu_1(n+1))
=
\frac{1}{N^2}\sum_{n_1, n_2\le N} a(\nu_0(n_1),\nu_1(n_2))
+o(1),
\end{align*}
as $N \to \infty$ along $\mathcal{X}$. In fact, Theorem \ref{thm:intro-main} implies the asymptotic independence of $\nu_0(n)$ and $\nu_1(n+1)$ in total variation along $\mathcal X$.

Finally, we discuss the dynamical consequences of our result. In \cite[Theorem~A]{BergelsonRichter2022}, Bergelson and Richter proved that, for a uniquely ergodic additive topological dynamical system $(Y,\mu,T)$,
\begin{align*}
 \lim_{N\to\infty}\frac1N\sum_{n\le N}F(T^{\Omega(n)}y)
 =\int_Y F\,d\mu
 \qquad (F\in C(Y),\ y\in Y).
\end{align*}
More recently, Wang \cite[Theorem~1.10]{Wang2026Averaged} established a two-point analogue along logarithmic averages, replacing the iterates $T^{\Omega(n)}$ by $T^{\Omega(n)+\Omega(n+1)}$. 
As a further consequence of Theorem \ref{thm:intro-main}, we are able to establish the following dynamical result, which can be seen as a natural continuation of the theorem of Bergelson and Richter, with ordinary convergence restricted to $\mathcal X$.

\begin{theorem}
\label{thm:joint-dynamical-limit}
Let $\mathcal X$ be the set obtained in 
Theorem~\ref{thm:intro-main}. Let
$(Y,\mu,T_0,T_1)$ be a uniquely ergodic $\mathbb N^2$-dynamical system.
Then, for every $F\in C(Y)$,
\begin{align}
\label{eq:intro-joint-dynamics}
\max_{\nu_0,\nu_1\in\{\omega,\Omega\}}
\sup_{y\in Y}
\left|
\frac1N\sum_{n\le N}
F\bigl(T_0^{\nu_0(n)}T_1^{\nu_1(n+1)}y\bigr)
-\int_Y F\,d\mu
\right|\longrightarrow 0
\end{align}
as $N\to\infty$ with $N\in\mathcal X$.
Moreover, without restricting the scales,
\begin{align}
\label{eq:intro-joint-log-dynamics}
\max_{\nu_0,\nu_1\in\{\omega,\Omega\}}
\sup_{y\in Y}
\left|
\frac1{\log N}\sum_{n\le N}
\frac{F\bigl(T_0^{\nu_0(n)}T_1^{\nu_1(n+1)}y\bigr)}{n}
-\int_Y F\,d\mu
\right|\longrightarrow 0
\end{align}
as $N\to\infty$.
\end{theorem}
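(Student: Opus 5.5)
The plan is to reduce \cref{eq:intro-joint-dynamics} to a statement about the joint law of $(\nu_0(n),\nu_1(n+1))$, and then use Theorem~\ref{thm:intro-main} to replace that law by a product of two independent Gaussians. Write $t=\log_2 N$ and
\begin{align*}
S_N(y):=\frac1N\sum_{n\le N}F\bigl(T_0^{\nu_0(n)}T_1^{\nu_1(n+1)}y\bigr)=\sum_{\mathbf m\in\Z_{\ge0}^2}P_N^{\nu_0,\nu_1}(\mathbf m)\,F\bigl(T_0^{m_0}T_1^{m_1}y\bigr).
\end{align*}
The first step is to truncate to the central box $B_N=\{\mathbf m:|m_i-t|\le K\sqrt t\}$. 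The mass of $P_N$ outside $B_N$ is at most $\|F\|_\infty\cdot O(K^{-2})$, by the Chebyshev/second-moment bounds for $\omega,\Omega$ (Tur\'an's variance estimate) applied to each coordinate separately. The Gaussian $G_t$ has comparable tail mass. Inside $B_N$, Theorem~\ref{thm:intro-main} for $N\in\mathcal X$ gives
\begin{align*}
\sum_{\mathbf m\in B_N}\bigl|P_N^{\nu_0,\nu_1}(\mathbf m)-G_t(\mathbf m)\bigr|\ll K^2 t\cdot t^{-1-c}=K^2t^{-c}.
\end{align*}
Choosing $K$ fixed and large, then letting $N\to\infty$, shows that $S_N(y)$ equals $\sum_{\mathbf m}G_t(\mathbf m)F(T_0^{m_0}T_1^{m_1}y)$ up to an error that is $o(1)$ uniformly in $y$ and in the four choices of $(\nu_0,\nu_1)$.

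The second step is the purely dynamical claim that, for a uniquely ergodic $\N^2$-action,
\begin{align*}
\sup_{y}\Bigl|\sum_{\mathbf m}G_t(\mathbf m)F(T_0^{m_0}T_1^{m_1}y)-\int F\,d\mu\Bigr|\to0\qquad(t\to\infty).
\end{align*}
This is the step I expect to be the main obstacle, and I would argue it as follows. Unique ergodicity gives uniform convergence of averages over squares $\frac1{L^2}\sum_{\mathbf m\in \mathbf a+[0,L)^2}F(T^{\mathbf m}y)\to\int F$, uniformly in $y$ and in the base point $\mathbf a$. This holds because $T^{\mathbf a}y$ is just another point of $Y$; I take the standard fact that Følner averages along squares converge uniformly. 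Fix $L$ with this uniform error below $\varepsilon$. Partition $\Z^2$ into translates of $[0,L)^2$. On each square the Gaussian $G_t$ varies by relative amount $O(L/\sqrt t)$ in the bulk, since $|\nabla G_t|\ll G_t\cdot K/\sqrt t$ on $B_N$. So we may replace $G_t$ by its square-average at cost $O(LK/\sqrt t)\|F\|_\infty+O(K^{-2})$. Each square then contributes its mass times $\int F+O(\varepsilon)$. Letting $t\to\infty$, then $\varepsilon\to0$ and $K\to\infty$, gives the claim and hence \cref{eq:intro-joint-dynamics}. The main care needed is that only $\N^2$ (not $\Z^2$) acts, but all indices $\mathbf m$ are nonnegative, so translates are fine.

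For \cref{eq:intro-joint-log-dynamics}, apply partial summation:
\begin{align*}
\frac1{\log N}\sum_{n\le N}\frac{a_n}n=\frac1{\log N}\int_1^N \frac{A(u)}{u^2}\,du+o(1),\qquad A(u)=\sum_{n\le u}a_n .
\end{align*}
Here $A(u)/u=\int F+o(1)$ for $u\in\mathcal X$, uniformly in $y$, while trivially $|A(u)/u|\le\|F\|_\infty$. The exceptional set has $\frac1{\log N}\int_{[1,N]\setminus\mathcal X}du/u\to0$ by \cref{eq:intro-exceptional}, after passing from integer to real scales: $A(u)/u$ changes by $O(1/u)$ between consecutive integers. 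Hence the logarithmic average tends to $\int F\,d\mu$ uniformly in $y$ and in $(\nu_0,\nu_1)$.
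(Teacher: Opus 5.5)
Your proposal is correct, but the dynamical step takes a different route from the paper.

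\textbf{How the paper argues.} It never passes through Gaussian-weighted orbit averages. It uses Theorem~\ref{thm:intro-main} only to get $\|P_x^{\nu_0,\nu_1}-G_{\log_2x}\|_{\ell^1}\to0$ along $\mathcal X$ (Proposition~\ref{prop:dyn-arithmetic-shifts}). Combined with $V_i(G_t)\ll t^{-1/2}$, this shows that the weights themselves have vanishing shift defect, $V_i(P_x^{\nu_0,\nu_1})\to0$. Hence the orbit measures $\sum_{\mathbf m}P_x^{\nu_0,\nu_1}(\mathbf m)\delta_{T^{\mathbf m}y}$ are asymptotically $T_0$- and $T_1$-invariant in total variation, uniformly in $y$. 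The weak-$*$ compactness argument (Lemma~\ref{lem:dyn-uniform-compactness}) is then applied to these measures directly.

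\textbf{How you argue.} You split off an arithmetic-free statement: uniform convergence of $\sum_{\mathbf m}G_t(\mathbf m)F(T^{\mathbf m}y)$. You prove it by tiling with $L\times L$ squares, using local constancy of $G_t$ at scale $L\ll\sqrt t$ and uniform convergence of F{\o}lner square averages. That F{\o}lner fact is itself proved by the same compactness argument, since the uniform weights on $[0,L)^2$ have shift defect $2/L$. So your route is essentially a two-stage version of the paper's.

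\textbf{What each buys.} Your route costs the extra tiling and local-constancy bookkeeping. In exchange, it isolates a clean statement about Gaussian averages in uniquely ergodic $\N^2$-systems, and the arithmetic enters only through an $\ell^1$ comparison. The paper's route is shorter and works for any family of probability weights with vanishing shift defect.

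\textbf{Tail control.} You bound the mass of $P_N^{\nu_0,\nu_1}$ outside a box of side $K\sqrt t$ via Tur\'an--Kubilius. The paper needs no extra arithmetic input here. It takes a box of side $(\log_2x)^{1/2+\delta}$ with $2\delta<c$ and uses that $P_x^{\nu_0,\nu_1}$ has total mass one (Lemma~\ref{lem:elem-estimate-shift-defect}). You could adopt the same trick.

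\textbf{Logarithmic averages.} Your partial-summation argument is the one the paper alludes to, written out in more detail.
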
 The background on uniquely ergodic systems and the proof of Theorem~\ref{thm:joint-dynamical-limit} are given in
Section~\ref{sec:dynamical}.


We note that removing the restriction $N\in\mathcal X$ from \eqref{eq:intro-joint-dynamics} would have a substantial arithmetic
consequence. Indeed, let $Y=\mathbb Z/2\mathbb Z$ be equipped with the uniform probability measure, take both $T_0$ and $T_1$ to
be the rotation $y\mapsto y+1$, and set $F(y)=(-1)^y$. Setting $y=0$ and $\nu_0=\nu_1=\Omega$ in \eqref{eq:intro-joint-dynamics}  would entail a proof of the two-point ordinary Chowla conjecture. On the other hand, notice that the logarithmic conclusion \eqref{eq:intro-joint-log-dynamics}, with the same choice of system, recovers the two-point logarithmically averaged Chowla theorem, first established by Tao \cite{Tao2016LogChowla}.

The rest of the paper is organized as follows. In section \ref{sec:preliminaries} we present the necessary preliminaries for the main proof. In section \ref{sec:circle-method}, we reduce Theorem \ref{thm:intro-main} to an averaged dyadic statement (Theorem \ref{thm:intro-dyadic}), introduce the circle-method decomposition into low and high frequencies, and establish the required high-frequency estimate. In section \ref{sec:low-frequencies} we prove the complementary low-frequency estimate. Finally, in section \ref{sec:dynamical} we give the proof of our main dynamical result, Theorem \ref{thm:joint-dynamical-limit}.\\

\noindent \textbf{Acknowledgments:} The author thanks Florian Richter for comments on an early draft of the article and for his guidance during the preparation of this article.

\noindent \textbf{Use of technology:} No AI tools were used to produce any of the results or the ideas in this article. The only instance of AI use was to create a referee-style report of the article before submission. For this we used GPT-6 Extra High, and only minor corrections were found. The author assumes responsibility for all content.

\section{Preliminaries}\label{sec:preliminaries}

In this section, we collect the notation and needed preliminaries from analytic number theory, Fourier analysis and \cite{TaoTeravainen2025} which are used in the proof of Theorem \ref{thm:intro-dyadic}. 

\subsection{Notation and Conventions}\label{subsec:prelim-notation}

We write $\N=\{1,2,\ldots\}$, $\N_0=\N\cup\{0\}$ and $\P$ for the primes. We write $\one_A$ and $\#A$ for the indicator and cardinality of a set $A$, respectively.  We
make use of the iterated logarithm notation
\[
 \log_1 x=\log x,\quad \log_{j+1}x=\log(\log_jx),
\]
where we take the argument of these iterated logarithms large enough so that the relevant expressions exist and are positive. 

Let $\T=\R/\Z$, identified with $[-1/2,1/2)$ and equipped with the normalised Haar measure. We write $e(t)=e^{2\pi it}$ and $\|t\|_{\T}=\min_{k\in\Z}|t-k|$. We use boldface notation for vectors. All vector norms are assumed to be Euclidean. For a scalar-, vector- or matrix-valued function $f$, we write $\|f\|_\infty$ to denote its sup norm, using the operator norm in the matrix-valued case. For a sufficiently differentiable function $f:\mathbb R\to \C$, we write \(f^{(j)}\) for its \(j\)-th derivative. For functions of several variables, we use \(\partial_{x_i}\), \(\nabla\), and \(D^2\) for partial derivatives, the gradient, and the Hessian, respectively.

Probability, expectation and variance are denoted by $\mathbf P$, $\mathbf E$ and $\mathbf{Var}$, respectively. For a random vector $\boldsymbol{S}$, we write $\boldsymbol{\operatorname{Cov}}(\boldsymbol{S})$ for its covariance matrix and $\Phi_{\boldsymbol{S}}(\boldsymbol\alpha)=\mathbf{E} e(\boldsymbol\alpha\cdot \boldsymbol{S})$ for its characteristic function.

In terms of asymptotic notation, the relations $X=O(Y)$ and $X\ll Y$ mean $|X|\leq CY$ for some constant $C$, $X=o(Y)$ means $|X|\leq c(x)Y$ for some $c(x) \rightarrow 0$ as $x \to \infty$, $X\asymp Y$ means $X\ll Y\ll X$. Note that writing subscripts in asymptotic notation indicates permitted dependencies. 

We use $\varphi$, $\mu$ and $\tau$ for Euler's totient function, the M\"obius function and the divisor function, respectively.  We write $d \mid n$ to denote that $d$ divides $n$. Additionally, we write $\operatorname{rad}(n)=\prod_{p\mid n}p$, and $p^a \parallel n$ when the $p$-adic valuation of $n$ equals $a$. 

For $z\geq2$, let $\N_{\leq z}$ and $\N_{>z}$ be the multiplicative
semigroups generated respectively by the primes $p\leq z$ and $p>z$, including
the empty product.  Every $n$ factors uniquely as
\begin{align*}
 n=n_{\leq z}n_{>z},\quad \text{where} \, \, \,
 n_{\leq z}=\prod_{p\leq z}p^{v_p(n)},
\end{align*}
and, for $\nu\in\{\omega,\Omega\}$, we write
\begin{align*}
    \nu_{\leq z}(n):=\nu(n_{\leq z}),\quad
    \nu_{>z}:=\nu-\nu_{\leq z}.
\end{align*}
Let $(a, b)$ denote the greatest common divisor of $a$ and $b$. An arithmetic function $f$ is multiplicative if $f(1)=1$ and $f(ab)=f(a)f(b)$ whenever $(a, b)=1$, and is $1$-bounded if $|f|\leq1$.
For $1$-bounded multiplicative functions $f,g$, and a scale $Y \geq 1$, define the pretentious distance
and the associated minimisation quantity by
\begin{align*}
 D(f,g;Y)^2:=\sum_{p\leq Y}\frac{1-\Re(f(p)\overline{g(p)})}{p},\qquad
 M(f;Y,Q):=\inf_{\substack{|t|\leq Y,\ q\leq Q\\ \chi\ ({\rm mod}\ q)}}
 D(f,\chi(\cdot)(\cdot)^{it};Y)^2.
\end{align*}
The infimum is taken  over all Dirichlet characters $\chi$ of modulus $ q \leq Q$.

\subsection{Preliminaries from Analytic Number Theory}\label{subsec:prelim-sieve}

We repeatedly use the following estimates due to Mertens.
\begin{theorem}\label{thm:Mertens}
    Let $z \geq 3$.
    \vspace{2mm}
    \begin{enumerate}[label=\textup{(\roman*)}]
        \item It holds that
        \begin{equation}
            \sum_{p\leq z}\frac1p=\log_2z+M+O((\log z)^{-1})
        \end{equation} for some constant $M.$ \\
        \item It holds that
        \begin{equation}
            \sum_{p\leq z}\frac{\log p}{p}=\log z+O(1).
        \end{equation}
    \end{enumerate}
\end{theorem}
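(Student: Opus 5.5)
Both parts of Theorem~\ref{thm:Mertens} follow from Chebyshev-type information on the primes combined with partial summation. I will prove (ii) first and deduce (i) from it.

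For (ii), start from Legendre's formula $\log N! = \sum_{p^k \le N} \lfloor N/p^k \rfloor \log p$. Stirling's estimate (or simply comparison with $\int_1^N \log t\,dt$) gives $\log N! = N\log N - N + O(\log N)$. On the arithmetic side, replace $\lfloor N/p\rfloor$ by $N/p + O(1)$. The error is then $O\bigl(\sum_{p\le N}\log p\bigr) = O(\vartheta(N))$, and Chebyshev's bound $\vartheta(N)\ll N$ makes this $O(N)$. Chebyshev's bound itself I obtain from the divisibility $\prod_{N<p\le 2N} p \mid \binom{2N}{N} \le 4^N$ and a dyadic sum. The prime powers with $k\ge2$ contribute at most $N\sum_p \log p/(p(p-1)) = O(N)$. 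Dividing by $N$ gives $\sum_{p\le N}\log p/p = \log N + O(1)$, and passing from integer $N$ to real $z$ costs only $O(\log z/z)$.

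For (i), write $A(t)=\sum_{p\le t}\log p/p = \log t + R(t)$ with $R(t)=O(1)$, by (ii). Abel summation with weight $1/\log t$ gives
\begin{align*}
\sum_{p\le z}\frac1p = \frac{A(z)}{\log z} + \int_2^z \frac{A(t)}{t\log^2 t}\,dt = 1 + \frac{R(z)}{\log z} + \int_2^z\frac{dt}{t\log t} + \int_2^z \frac{R(t)}{t\log^2 t}\,dt.
\end{align*}
The first integral equals $\log_2 z - \log_2 2$. The second converges absolutely as $z\to\infty$, since $R$ is bounded. Its tail over $[z,\infty)$ is $O\bigl(\int_z^\infty dt/(t\log^2 t)\bigr) = O(1/\log z)$. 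Collecting terms gives (i) with
\[
M = 1 - \log_2 2 + \int_2^\infty \frac{R(t)}{t\log^2 t}\,dt,
\]
and error $O(1/\log z)$, as claimed.

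The only genuinely arithmetic input is the Chebyshev upper bound $\vartheta(N)\ll N$; everything else is bookkeeping. Since the constant $M$ is never identified explicitly in the statement, no further work is needed. As these are classical results, in the paper itself it would suffice to cite a standard text such as \cite{Tenenbaum2015} rather than reproduce the argument.
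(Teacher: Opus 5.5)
Your argument is correct: it is the standard textbook proof, with Legendre's formula, Stirling's estimate and the Chebyshev bound $\vartheta(N)\ll N$ giving (ii), and Abel summation against $1/\log t$ then giving (i) with an explicit constant $M$ and an $O(1/\log z)$ tail. The paper gives no proof and simply quotes Mertens' estimates as classical, so your derivation is exactly the argument that the citation implicitly relies on.
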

Another standard result that we make use of is Rankin's trick which essentially stems from the following elementary inequality
\begin{align}
    \one_{\{u\geq y\}}
    \leq \left(\frac{u}{y}\right)^\delta
    \qquad \text{for } u\geq 0,\; y,\delta>0.
\end{align}
In particular, for non-negative arithmetic functions $f$ and for nonnegative random variables $U$, this yields the following two very useful inequalities
\begin{equation}\label{eq:prelim-rankin}
    \sum_{n\geq y} f(n)
    \leq y^{-\delta}\sum_n f(n)n^\delta,
    \qquad
    \mathbf{P}(U\geq y)
    \leq y^{-\delta}\mathbf{E}U^\delta.
\end{equation}
An additional useful estimate, for $z \geq e^2$ and $\delta = 1/\log z$, is
\begin{align}
    \prod_{p\leq z}\frac{1-p^{-1}}{1-p^{\delta-1}}\ll 1,
    \label{euler-product}
\end{align}
which is a straightforward consequence of the second estimate in Mertens' theorem and the elementary estimate $p^\delta-1\ll \delta\log p$, valid for \(p\leq z\).

We also use the fundamental lemma of sieve theory \cite[Theorem~6.12]{FriedlanderIwaniec2010} (see also \cite[Theorem~I.4.4]{Tenenbaum2015}).
The formulation below is deliberately stated such that the sieve-dimension condition is required only up to a fixed multiplicative constant $K$ and with a weaker exponential error, as these are sufficient for our purposes. To that end, let $\mathcal A$ be a finite set of integers and $\mathcal P \subset \P$ a set of primes, and set
\begin{align*}
    P(z):=\prod_{\substack{p\leq z\\ p \in \mathcal{P}}}p,
    \quad
    \mathcal A_d:=\#\{a\in\mathcal A:d\mid a\},
    \quad
    S(\mathcal A, \mathcal P, z) = \#\{a \in \mathcal A:(a, P(z)) = 1\}.
\end{align*}
\begin{lemma}[Fundamental Lemma of Sieve Theory]
    Suppose that there exist $X>0$ and a multiplicative function
$g:\mathbb N\to[0,1]$ such that
\begin{align}
    \mathcal{A}_d = Xg(d)+r_d
    \label{eq:sieve-density}
\end{align}
for every $d\mid P(z)$. Suppose moreover that, for some fixed
$\kappa>0$ and $K\geq1$,
\begin{align}
    \prod_{\substack{w\leq p<v\\ p\in\mathcal P}}
    (1-g(p))^{-1}
    \leq
    K\left(\frac{\log v}{\log w}\right)^\kappa
    \label{eq:sieve-dimension}
\end{align}
for all $2\leq w<v\leq z$. 

Let $D=z^s$, where $s$ is sufficiently large in terms of $\kappa$ and $K$. Then there exists a constant
$c=c(\kappa,K)>0$ such that
\begin{align}
    S(\mathcal A,\mathcal P,z)
    &=
    X\prod_{\substack{p\leq z\\ p\in\mathcal P}}
    (1-g(p))
    \left(1+O_{\kappa,K}(e^{-cs})\right)
    +O\left(
        \sum_{\substack{d\leq D\\ d\mid P(z)}} |r_d|
    \right).
    \label{eq:fundamental-sieve}
\end{align}
\end{lemma}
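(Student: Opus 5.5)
This lemma is a weakened form of the fundamental lemma as stated in \cite[Theorem~6.12]{FriedlanderIwaniec2010}, so the plan is to reduce to that result rather than reprove it. The standard statement asks for the dimension condition $\prod_{w\leq p<v,\,p\in\mathcal P}(1-g(p))^{-1}\leq (\log v/\log w)^\kappa(1+K_0/\log w)$. It then gives, for $s\geq 9\kappa+1$,
\begin{align*}
S(\mathcal A,\mathcal P,z)=X\prod_{\substack{p\leq z\\ p\in\mathcal P}}(1-g(p))\left(1+O(e^{-s}K_0^{10})\right)+O\Bigl(\sum_{\substack{d\leq D\\ d\mid P(z)}}|r_d|\Bigr).
\end{align*}
The only discrepancy is that our \eqref{eq:sieve-dimension} allows a fixed multiplicative constant $K$ rather than a factor $1+K_0/\log w$ that tends to $1$. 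So the work is to show that a constant $K$ can be absorbed at the cost of a worse exponent $c(\kappa,K)$ in the exponential error.

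\textbf{Step 1: enlarge the dimension.} Since the bound is only needed for $2\leq w<v\leq z$, I split into two cases. If $\log v/\log w\geq 2$, then $K(\log v/\log w)^\kappa\leq (\log v/\log w)^{\kappa'}$ with $\kappa'=\kappa+\log_2 K/\log 2$ (base-2 logarithm of $K$, i.e.\ $2^{\kappa'-\kappa}=K$). If $\log v/\log w<2$, I instead use the trivial consequence of \eqref{eq:sieve-dimension} (applied with the same $w,v$) that the product is at most $K2^\kappa$. This is bounded by $(\log v/\log w)^{\kappa'}(1+K_0/\log w)$ only if $K_0$ is chosen appropriately. The difficulty is that when $v$ is very close to $w$, the ratio $(\log v/\log w)^{\kappa'}$ tends to $1$, so a multiplicative constant cannot be absorbed into it.

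\textbf{Step 2: the main obstacle.} The hardest point is exactly this short-interval regime. I would handle it by noting that the Friedlander--Iwaniec argument (Buchstab iteration with Rosser weights, or the simpler Brun-type $\beta$-sieve) only uses the dimension condition through bounds of the form $\sum_{w\leq p<v} g(p)\log p/p \ll \log(v/w)+O(1)$, or equivalently through $V(w)/V(v)\ll(\log v/\log w)^\kappa$. Every such use tolerates a bounded constant factor, and this changes only the implied constants and the rate in $e^{-cs}$. Concretely, I would follow the proof in \cite[Theorem~I.4.4]{Tenenbaum2015}. There the hypothesis is already of the form $V(w)/V(v)\leq K(\log v/\log w)^\kappa$, and the conclusion has error $O(e^{-u/2})$ with constants depending on $\kappa,K$, for $u$ large. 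Setting $u=s$ gives the lemma directly with $c=1/2$ or smaller. I will verify that Tenenbaum's formulation matches: his condition for $g$ multiplicative with $0\leq g(p)<1$ is precisely \eqref{eq:sieve-dimension}. The remainder term $\sum_{d\leq D,\,d\mid P(z)}|r_d|$ arises there from the support of the upper and lower sieve weights in $d\leq D=z^s$.

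\textbf{Step 3: assemble.} Take upper and lower $\beta$-sieve weights $\lambda_d^\pm$ supported on $d\mid P(z)$, $d\leq D$. Then
\begin{align*}
\sum_d\lambda_d^-\mathcal A_d\leq S\leq\sum_d\lambda_d^+\mathcal A_d.
\end{align*}
Insert \eqref{eq:sieve-density} and use $|\lambda_d^\pm|\leq1$ to get the remainder term. The main terms equal $X\sum_d\lambda_d^\pm g(d)=X\prod_{p\leq z,\,p\in\mathcal P}(1-g(p))\left(1+O_{\kappa,K}(e^{-cs})\right)$, by the Tenenbaum bound from Step 2. Combining the two inequalities yields \eqref{eq:fundamental-sieve}.
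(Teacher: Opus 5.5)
Your plan of deducing the statement from a standard fundamental lemma is what the paper does; it gives no argument beyond citing Friedlander--Iwaniec, Theorem~6.12 (and Tenenbaum). However, your reduction is built around a discrepancy that does not exist, and your attempt to resolve it does not close.

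You have misquoted Friedlander--Iwaniec. Their fundamental lemma is stated under exactly the crude hypothesis $\prod_{w\le p<z}(1-g(p))^{-1}\le K(\log z/\log w)^{\kappa}$ for $2\le w<z$, with $D\ge z^{9\kappa+1}$. This holds both for the weight form $\sum_{d\mid P(z)}\lambda^\pm_d g(d)=(1+O(e^{-s}K^{10}))\prod_{p<z}(1-g(p))$ and for the form about $S(\mathcal A,z)$. The error $e^{-s}K^{10}$ and threshold $9\kappa+1$ you remember belong to that statement. So \eqref{eq:sieve-dimension} is literally their hypothesis. Taking $s\ge 9\kappa+1$ and absorbing $K^{10}$ into $O_{\kappa,K}$ gives the lemma with $c=1$, and Steps~1--3 are not needed.

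As written, your argument fails at Step~2. Your Step~1 observation is correct: a fixed factor $K$ cannot be absorbed into $(\log v/\log w)^{\kappa'}(1+K_0/\log w)$. For example, put $g(p)=1/2$ on one prime in each range $[e^{2^j},e^{2^{j+1}})$ and $g(p)=0$ otherwise. This satisfies \eqref{eq:sieve-dimension} with $\kappa=1$, $K=4$, but violates the other condition for every $\kappa',K_0$ when $v$ is just above such a prime $w$.

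You then rest everything on the claim that Tenenbaum's Theorem~I.4.4 is already stated under \eqref{eq:sieve-dimension}. As far as I know it is not. It is the Halberstam--Richert-type statement, with the factor $1+A/\log\eta$ in the dimension condition and error $O(u^{-u/2})$. That is exactly the form your Step~1 shows you cannot reach.

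What remains is your assertion that every use of the dimension condition in the proof ``tolerates a bounded constant factor''. This is true (the Friedlander--Iwaniec lemma is the proof of it), but you assert it rather than check it, so it is the unproved step.

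Step~3 then needs a bound for $\sum_d\lambda^\pm_d g(d)$ under \eqref{eq:sieve-dimension}. That is the Friedlander--Iwaniec weight lemma itself. Tenenbaum's theorem does not supply it, since it is a statement about $S(\mathcal A,\mathcal P,z)$ obtained from Brun's sieve, not about $\beta$-sieve weights.
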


\subsection{Preliminaries from Fourier Analysis} \label{subsec:prelim-Fourier}

Let $C_c^\infty(\mathbb R)$ denote the space of infinitely differentiable functions on $\R$ with compact support, and let \(\mathcal S(\mathbb R)\) denote the Schwartz space on $\R$. 
For $f\in C_c^\infty(\mathbb R)$, we denote its Fourier transform by $\widehat f$. We make use of \cite[Proposition 6.2.1]{SteinShakarchi2003}, which states that, for every \(f\in C_c^\infty(\mathbb R)\), \(A\ge0\), and \(j\in\mathbb N_0\),
\begin{align}
\bigl|\widehat f^{(j)}(\xi)\bigr|
\ll_{f,A,j}
(1+|\xi|)^{-A}.
\label{eq:fourier-schwartz-decay}
\end{align}
For \(f\in\mathcal S(\mathbb R)\), we shall use Fourier inversion in the form
\cite[Theorem~6.2.4]{SteinShakarchi2003}
\begin{align}
f(u) = \int_{\mathbb R}\widehat f(\xi)e(u\xi)\,d\xi,
\label{eq:fourier-inversion}
\end{align}
as well as the Poisson summation formula
\cite[Theorem~5.3.1]{SteinShakarchi2003}
\begin{align}
\sum_{n\in\mathbb Z}f(n) =
\sum_{n\in\mathbb Z}\widehat f(n),
\qquad f\in\mathcal S(\mathbb R).
\label{eq:poisson-summation}
\end{align}
Note that, for $\mathbf{S}$ being a $\Z^2$-valued random vector, for every \(\mathbf a\in\mathbb Z^2\),
\begin{align}
\mathbf P(\mathbf S=\mathbf a)
=
\int_{\mathbb T^2}
e(-\boldsymbol{\alpha}\cdot\mathbf a)
\mathbf{E} e(\boldsymbol\alpha\cdot \boldsymbol{S})\,
d\boldsymbol{\alpha},
\end{align}
which follows by the two-dimensional version of \cref{eq:fourier-inversion} on $\Z^2$. 

Next, we fix some real numbers $0<a<b$ and an even function $\eta\in C_c^\infty(\mathbb R)$ such that
\begin{align}
    0\leq \eta\leq 1,
    \qquad
    \eta=1 \ \text{on }[-a,a],
    \qquad
    \operatorname{supp}\eta\subset[-b,b].
    \label{eq:smooth-cutoff}
\end{align}
For $R>2b$, define
\begin{align}
    T_R(u)
    &:=
    \int_{-1/2}^{1/2}\eta(R\alpha)e(\alpha u)\,d\alpha.
    \label{eq:smoothing-kernel}
\end{align}
Since $\operatorname{supp}(\eta(R\cdot)) \subset [-b/R,b/R] \subset [-1/2,1/2]$, we may extend the integral to $\mathbb R$. After the change of variables $v=R\alpha$, and using the fact that $\widehat{\eta}$ is even (as $\eta$ is), it follows that
\begin{align}
    T_R(u)=\frac{1}{R}\widehat{\eta}(u/R).
    \label{eq:smoothing-kernel-identity}
\end{align}
We record several useful properties of $T_R$ in the following lemma.

\begin{lemma}
\label{lem:smooth-lattice-kernel}
For every $A,j\geq0$,
\begin{align}
    |T_R^{(j)}(u)|
    &\ll_{A,j}
    R^{-1-j}
    \left(1+\frac{|u|}{R}\right)^{-A}.
    \label{eq:kernel-decay}
\end{align}
Moreover, for every $j\geq0$,
\begin{align}
    \sum_{m\in\mathbb Z}|m|^j|T_R(m)|
    \ll_j R^j.
    \label{eq:kernel-moments}
\end{align}
Finally, $\sum_{m\in\mathbb Z}T_R(m)=1$ and $\sum_{m\in\mathbb Z}mT_R(m)=0$.
\end{lemma}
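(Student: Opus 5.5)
The whole argument rests on the identity \eqref{eq:smoothing-kernel-identity}, $T_R(u)=R^{-1}\widehat\eta(u/R)$, together with the Schwartz decay \eqref{eq:fourier-schwartz-decay} of $\widehat\eta$ and Poisson summation \eqref{eq:poisson-summation}.

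\emph{Pointwise decay.} Differentiating the identity $j$ times gives
\begin{align*}
T_R^{(j)}(u)=R^{-1-j}\,\widehat\eta^{(j)}(u/R).
\end{align*}
By \eqref{eq:fourier-schwartz-decay} applied to $f=\eta$, we have $|\widehat\eta^{(j)}(\xi)|\ll_{A,j}(1+|\xi|)^{-A}$. Substituting $\xi=u/R$ yields \eqref{eq:kernel-decay} directly. The implied constant depends on $\eta$, which is fixed once and for all.

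\emph{Moment bound.} Use \eqref{eq:kernel-decay} with $j=0$ and $A=j+2$, which gives
\begin{align*}
\sum_{m\in\Z}|m|^j|T_R(m)|\ll_j R^{-1}\sum_{m\in\Z}|m|^j\Bigl(1+\frac{|m|}{R}\Bigr)^{-j-2}.
\end{align*}
Split the sum at $|m|\le R$.
\begin{itemize}
\item For $|m|\le R$, each term is at most $R^j$, and there are $O(R)$ terms. This range contributes $O(R^{j+1})$ inside the sum.
\item For $|m|>R$, each term is $\ll R^{j+2}|m|^{-2}$. Summing over $|m|>R$ gives $\ll R^{j+2}\cdot R^{-1}=R^{j+1}$.
\end{itemize}
Multiplying by the prefactor $R^{-1}$ gives $\ll_j R^j$. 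Here $R>2b$ is bounded below, so no issue arises for small $R$.

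\emph{Sum identities.} The function $u\mapsto T_R(u)=R^{-1}\widehat\eta(u/R)$ is Schwartz, so Poisson summation \eqref{eq:poisson-summation} applies. By Fourier inversion \eqref{eq:fourier-inversion} and the evenness of $\eta$, its Fourier transform is
\begin{align*}
\widehat{T_R}(\xi)=\eta(R\xi).
\end{align*}
The same conclusion follows directly from the definition \eqref{eq:smoothing-kernel} as an integral over $\R$ of $\eta(R\alpha)e(\alpha u)$.

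\begin{itemize}
\item For the zeroth moment, Poisson gives $\sum_m T_R(m)=\sum_n\eta(Rn)$. Since $\operatorname{supp}\eta(R\cdot)\subset[-b/R,b/R]$ with $b/R<1/2$, only the term $n=0$ survives. It equals $\eta(0)=1$, because $\eta=1$ on $[-a,a]$.
\item For the first moment, there are two routes.
  \begin{itemize}
  \item Apply Poisson to $uT_R(u)$, whose Fourier transform is $-(2\pi i)^{-1}\frac{d}{d\xi}\eta(R\xi)$. The result is $\sum_n -(2\pi i)^{-1}R\eta'(Rn)$. Again only $n=0$ survives, and $\eta'(0)=0$ since $\eta$ is constant near $0$ (equivalently, $\eta$ is even).
  \item More simply, $\widehat\eta$ is even, so $T_R$ is even. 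Then $\sum_m mT_R(m)=0$ by pairing $m$ with $-m$; the series converges absolutely by the moment bound.
  \end{itemize}
\end{itemize}

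The only mildly delicate point is justifying Poisson summation. This holds because $T_R\in\mathcal S(\R)$, as $\widehat\eta$ is Schwartz, and because the support condition $R>2b$ ensures that no nonzero frequencies contribute.
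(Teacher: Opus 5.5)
Your proposal is correct and follows the paper's proof: the scaling identity plus Schwartz decay gives the pointwise bound, a $j=0$ decay estimate with $A>j+1$ gives the moment bound, and Poisson summation with $\widehat{T_R}(\xi)=\eta(R\xi)$ together with evenness of $T_R$ gives the two sum identities. The only differences are cosmetic: you split the moment sum at $|m|\le R$ where the paper uses a Riemann-sum comparison, and you add an optional second Poisson route for the first moment.
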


\begin{proof}
From \cref{eq:smoothing-kernel-identity} we obtain $T_R^{(j)}(u) = R^{-1-j}\widehat\eta^{(j)}(u/R)$. Since $\widehat\eta$ is Schwartz, \eqref{eq:kernel-decay} follows from (\ref{eq:fourier-schwartz-decay}). To prove \eqref{eq:kernel-moments}, fix $j\geq0$ and choose $A>j+1$. Applying \eqref{eq:kernel-decay} with such $A$ gives
\begin{align*}
    \sum_{m\in\mathbb Z}|m|^j|T_R(m)|
    &\ll_{A,j}
    R^{-1}
    \sum_{m\in\mathbb Z}
    |m|^j
    \left(1+\frac{|m|}{R}\right)^{-A} \\
    &=
    R^j
    \left[
        \frac1R
        \sum_{m\in\mathbb Z}
        \left(\frac{|m|}{R}\right)^j
        \left(1+\frac{|m|}{R}\right)^{-A}
    \right]
    \ll_j R^j,
\end{align*}
since $A>j+1$. Furthermore, $T_R$ is the inverse Fourier transform of $\alpha\mapsto\eta(R\alpha)$, and hence $\widehat{T_R}(\xi)=\eta(R\xi)$.
By Poisson summation,
\begin{align*}
    \sum_{m\in\mathbb Z}T_R(m)
    =
    \sum_{k\in\mathbb Z}\eta(Rk)
    =
    \eta(0)
    =
    1.
\end{align*}
Since $\eta$ is even, $T_R$ is even. Hence, $mT_R(m)$ is odd, and its absolute summability follows from \eqref{eq:kernel-moments} with $j=1$. Therefore, $\sum_{m\in\mathbb Z}mT_R(m)=0$.
\end{proof}
Finally, we record some elementary regularity estimates for the Gaussian density introduced in \cref{eq:intro-Gaussian}. Write $\mathbf y:=\mathbf u-(t,t)$. For \(i,j\in\{0,1\}\), direct differentiation gives
\[
\partial_{u_i}G_t(\mathbf u)
=
-\frac{y_i}{t}G_t(\mathbf u),
\]
\[
\partial_{u_i}\partial_{u_j}G_t(\mathbf u)
=
\left(
\frac{y_i y_j}{t^2}
-
\frac{\delta_{ij}}{t}
\right)
G_t(\mathbf u),
\]
and
\[
\partial_tG_t(\mathbf u)
=
\left(
-\frac1t
+
\frac{y_0+y_1}{t}
+
\frac{\|\mathbf y\|^2}{2t^2}
\right)
G_t(\mathbf u),
\]
where \(\delta_{ij}\) denotes the Kronecker delta. Upon writing \(\mathbf y=\sqrt t\,\mathbf v\), and using
\[
\sup_{\mathbf v\in\mathbb R^2}
\|\mathbf v\|^k e^{-\|\mathbf v\|^2/2}
<\infty
\qquad (k\geq 0),
\]
we obtain, for $t \geq 1$,
\begin{align}
\|\nabla G_t\|_\infty+\|\partial_tG_t\|_\infty
\ll t^{-3/2},
\qquad
\|D^2G_t\|_\infty\ll t^{-2},
\label{eq: Gaussian-Smoothing-Prelim}
\end{align}
where
\[
\|\nabla G_t\|_\infty
:=
\sup_{\mathbf u\in\mathbb R^2}
\|\nabla G_t(\mathbf u)\|,
\qquad
\|\partial_tG_t\|_\infty
:=
\sup_{\mathbf u\in\mathbb R^2}
|\partial_tG_t(\mathbf u)|,
\]
and $D^2G_t$ denotes the Hessian of $G_t$ with respect to $\mathbf u$.

We shall also use the following standard Gaussian estimates on \(\mathbb Z^2\), which we make use of in Section \ref{sec:dynamical}. These estimates follow from standard Gaussian integral comparisons and the fundamental theorem of calculus. For \(t\geq 1\),
\begin{align}
\label{eq:gaussian-dynamical-1}
\sum_{\mathbf m\in\mathbb Z^2}G_t(\mathbf m)
=1+O(t^{-1/2}).
\end{align}
Moreover, for \(r\geq 0\), let $Q(t,r):=\left\{\mathbf m\in\mathbb Z^2:|m_i-t|\leq r,\ i=0,1\right\}$. Then,
\begin{align}
\label{eq:gaussian-dynamical-2}
\sum_{\mathbf m\notin Q(t,r)}G_t(\mathbf m)
\ll \exp\left(-\frac{r^2}{4t}\right).
\end{align}
Finally, writing \(\mathbf e_0=(1,0)\) and \(\mathbf e_1=(0,1)\), we have
\begin{align}
\label{eq:gaussian-dynamical-3}
\sum_{\mathbf m\in\mathbb Z^2}
|G_t(\mathbf m-\mathbf e_i)-G_t(\mathbf m)|
\ll t^{-1/2},
\qquad i=0,1.
\end{align}

\subsection{Quantitative Two-point Correlations}\label{subsec:prelim-correlations}

We use the non-pretentious case of the quantitative correlation theorem from Tao and Ter\"av\"ainen~\cite[Theorem~3.1(ii)]{TaoTeravainen2025}, specialised to $W=1$ and the shifts $0,1$.

\begin{theorem}[Tao--Ter\"av\"ainen]\label{thm:prelim-TT}
There exists an absolute $c>0$ with the following property.  Let
$X\geq2$, $1\leq \mathcal{L} \leq\log X$, and let $g_1,g_2:\N\to\C$ be $1$-bounded
multiplicative functions satisfying
\begin{equation}\label{eq:prelim-TT-hypothesis}
 \exp M\bigl(g_1;X^2,(\log X)^{1/125}\bigr)\gg \mathcal{L}.
\end{equation}
There is a measurable set $\mathcal{E}\subset[\sqrt X,X]$ such that
\begin{equation}\label{eq:prelim-TT-exceptional}
 \frac1{\log X}\int_{\mathcal{E}}\frac{\mathrm dN}{N}\ll \mathcal{L}^{-c}
\end{equation}
and, for each $(h_1,h_2)=(0,1)$ or $(h_1,h_2)=(1,0)$, one has
\begin{align}
    \frac1N
    \sum_{N<n\leq2N}
    g_1(n+h_1)g_2(n+h_2)
    \ll \mathcal{L}^{-c}
\end{align}
for every $N\in[\sqrt X,X]\setminus\mathcal E$.
\end{theorem}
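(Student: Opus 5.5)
This statement is a direct specialisation of \cite[Theorem~3.1(ii)]{TaoTeravainen2025}, so the plan is to verify that our hypotheses imply theirs, rather than to reprove anything. Their theorem is formulated for a general modulus parameter $W$ and a general collection of shifts $h_1,h_2$. Its non-pretentious branch (ii) assumes that one of the functions, say $g_1$, is far from every twisted character $\chi(n)n^{it}$ with $q\le Q$ and $|t|\le X^{2}$. Quantitatively, this means $\exp M(g_1;X^2,Q)\gg\mathcal L$ for a suitable power $Q=(\log X)^{\kappa}$. The first step is to set $W=1$, so that all congruence restrictions on $n$ disappear. I will then check that the exponent $1/125$ in \eqref{eq:prelim-TT-hypothesis} is at most the admissible exponent in their statement, and that the range $1\le\mathcal L\le\log X$ lies within theirs. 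If their $Q$ is a larger power of $\log X$, the hypothesis only becomes weaker as $Q$ decreases, because $M$ is an infimum over more characters when $Q$ is larger. In that case the stated hypothesis is implied, or else one simply adopts their exponent.

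The second step is the choice of shifts. Taking $(h_1,h_2)=(0,1)$ gives the correlation $\sum g_1(n)g_2(n+1)$ directly. For $(h_1,h_2)=(1,0)$ I would either apply the theorem with this second shift pair, since their statement allows arbitrary distinct bounded shifts, or reduce to the first case. The reduction substitutes $n\mapsto n-1$, which changes the range $(N,2N]$ only by $O(1)$ terms and hence by $O(1/N)=O(\mathcal L^{-c})$, using $N\ge\sqrt X$ and $\mathcal L\le\log X$. Note that the non-pretentiousness is always placed on $g_1$, whichever shift it carries, and this is how their theorem is phrased.

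The third step handles the exceptional set. Their theorem produces an exceptional set of $N$ for each shift configuration, each of logarithmic measure $\ll\mathcal L^{-c}$ in $[\sqrt X,X]$. I will take $\mathcal E$ to be the union of the two sets, which preserves \eqref{eq:prelim-TT-exceptional} up to a factor of $2$. The constant $c$ is taken to be the minimum of the constants arising in the two applications.

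The only real obstacle is bookkeeping: matching normalisations, such as whether their average is over $(N,2N]$ or $[1,N]$, and whether their exceptional set is measured logarithmically on $[\sqrt X,X]$ or on $[X^{\varepsilon},X]$. If their interval is longer, restricting to $[\sqrt X,X]$ only shrinks $\mathcal E$. If their average is over $n\le N$, the dyadic version follows by differencing at $N$ and $2N$, after enlarging $\mathcal E$ by its dilate $\mathcal E/2$, which has comparable logarithmic measure.
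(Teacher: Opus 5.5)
Your plan is the paper's own: the paper gives no argument beyond invoking \cite[Theorem~3.1(ii)]{TaoTeravainen2025} with $W=1$ and shifts $\{0,1\}$. Your main route does the same: apply that theorem for $(h_1,h_2)=(0,1)$ and for $(1,0)$, take $\mathcal E$ to be the union of the two exceptional sets, and take $c$ to be the smaller constant.

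Two of your side remarks are inverted, though.

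First, the monotonicity in $Q$. Since $M(g_1;X^2,Q)$ is non-increasing in $Q$, the hypothesis with the larger $Q$ is the stronger one. So \eqref{eq:prelim-TT-hypothesis} implies a hypothesis of the form $\exp M(g_1;X^2,(\log X)^{\kappa})\gg\mathcal L$ only when $\kappa\le 1/125$. Your check that ``$1/125$ is at most the admissible exponent'' is therefore backwards. In the scenario you describe (their $Q$ larger than $(\log X)^{1/125}$), the implication runs the useless way: their hypothesis implies ours, not conversely. In that case you really would have to change the exponent in the statement.

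Second, the reduction by translation. The substitution $n\mapsto n-1$ turns $\sum g_1(n+1)g_2(n)$ into $\sum g_1(m)g_2(m-1)$, which has shifts $(0,-1)$, not $(0,1)$. No translation can change which function sits at the larger argument. Swapping the roles of $g_1$ and $g_2$ would move the non-pretentiousness hypothesis onto the second function, where you do not have it. So the case $(1,0)$ has to come from applying the cited theorem to that shift pair directly, as in your first option.
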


For $\nu\in\{\omega,\Omega\}$ and $\alpha\in\T$, set $f_{\nu,\alpha}(n)=e(\alpha\nu(n))$. These functions are multiplicative, $1$-bounded, and satisfy $f_{\nu,\alpha}(p)=e(\alpha)$ for every prime. The non-pretentiousness argument used in the high-frequency analysis of \cite[p. 36]{TaoTeravainen2025} yields the following uniform estimate.

\begin{lemma}\label{lem:nonpret}
For sufficiently large $X$, uniformly in $\nu\in\{\omega,\Omega\}$ and
$\alpha\in\T$,
\begin{equation}\label{eq:prelim-phase-distance}
 M\bigl(f_{\nu,\alpha};X^2,(\log X)^{1/125}\bigr)
 \geq c_0\|\alpha\|_{\T}^2\log_2X-O(1)
\end{equation}
for an absolute $c_0>0$. 
\end{lemma}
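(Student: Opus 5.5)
Our plan is to bound $M(f_{\nu,\alpha};X^2,Q)$ from below, with $Q=(\log X)^{1/125}$, uniformly over all admissible twists. Fix $|t|\leq X^2$, $q\leq Q$ and a character $\chi$ modulo $q$. Since $f_{\nu,\alpha}(p)=e(\alpha)$ for every prime,
\begin{align*}
D\bigl(f_{\nu,\alpha},\chi(\cdot)(\cdot)^{it};X^2\bigr)^2=\sum_{p\leq X^2}\frac{1-\Re\bigl(e(\alpha)\overline{\chi(p)}p^{-it}\bigr)}{p}.
\end{align*}
Neither $\nu$ nor the behaviour at prime powers enters, which gives the uniformity in $\nu$ for free. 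It suffices to show that this sum is at least $c_0\|\alpha\|_{\T}^2\log_2X-O(1)$, with the $O(1)$ independent of $t,q,\chi$. We discard primes $p\mid q$ and primes $p\leq Q$; they contribute at most $\sum_{p\leq Q}2/p\ll\log_3 X$. This loss looks too large, but it is harmless: if $\|\alpha\|_\T^2\log_2 X\ll\log_3X$ the claim is trivial after adjusting $c_0$, and otherwise $\log_3X$ is absorbed into $\tfrac12 c_0\|\alpha\|_\T^2\log_2X$. Restricting to primes $Q<p\leq X^2$ (up to a cutoff) loses only a constant factor, since $\sum_{Q<p\leq X}1/p=\log_2X-\log_2Q+O(1)=(1-o(1))\log_2X$.

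The key step is to split into residue classes. Write $\chi(p)=e(\theta_a)$ for $p\equiv a\pmod q$ with $(a,q)=1$. The summand is then $1-\cos(2\pi(\alpha-\theta_a-\tfrac{t}{2\pi}\log p))$. We treat two ranges of $t$ separately.

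\emph{Case 1: $|t|\leq 1/\log X$.} We have $p^{-it}=1+O(|t|\log p)$, which costs $O(1)$ over $p\leq X$. For $|t|\geq 1/\log X$ we would instead use that $p^{-it}$ oscillates: by the prime number theorem in arithmetic progressions (Siegel--Walfisz, valid since $q\leq(\log X)^{1/125}$), $\sum_{p\equiv a,\,p\leq y}p^{-1-it}$ is controlled by $\frac1{\varphi(q)}\int u^{-1-it}\,du/\log u$. This yields the standard estimate $\sum_{p\leq X,\,p\equiv a}\Re(\zeta p^{-it})/p\leq\frac{1}{\varphi(q)}\log(1/(|t|\log X))\cdot$ bounded $+O(1/\varphi(q))$ for $|\zeta|=1$.

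Combining the two regimes, the standard inequality (as in Granville--Soundararajan type arguments, and as on \cite[p.~36]{TaoTeravainen2025}) is
\begin{align*}
\sum_{p\leq X}\frac{1-\Re(e(\alpha)\overline{\chi(p)}p^{-it})}{p}\geq \sum_{Q<p\leq X^{\epsilon}}\frac{1-\cos(2\pi(\alpha-\theta_{p}))}{p}-O(1)
\end{align*}
in the small-$t$ range. In the large-$t$ range, taking $|t|\geq (\log X)^{-1/2}$ say, the sum is at least $\tfrac12\log_2X-O(\log_3 X)$ by Vinogradov--Korobov zero-free regions for $L(s,\chi)$ with $q\leq Q$. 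This uses the bound $\sum_{p\leq X}\chi(p)p^{-1-it}\leq\log\frac{1}{|1+it|-1}+O(1)$, uniform for $|t|\leq X^2$, and that bound is the main obstacle. We should make sure it also handles a possible Siegel zero; with $q\leq(\log X)^{1/125}$ Siegel's theorem suffices, and since the constant there is ineffective only the implied constants are affected.

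It remains to treat small $t$. If $\chi$ is principal, $\theta_a=0$, and the sum is $\geq(1-\cos2\pi\alpha)(\log_2X-O(\log_3X))\gg\|\alpha\|_\T^2\log_2X-O(\log_3 X)$. If $\chi$ is non-principal, the values $\theta_a$ equidistribute in a way that forces many classes to have $\|\alpha-\theta_a\|_\T\geq 1/4$ or so. Since each class has mass $\frac{1}{\varphi(q)}\log_2X$, averaging $1-\cos$ over a nontrivial subgroup of roots of unity containing at least two points gives $\gg 1$, hence $\gg\log_2X\geq\|\alpha\|_\T^2\log_2X$. This completes the bound, with the $\log_3X$ losses absorbed as explained above.
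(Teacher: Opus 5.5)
You have the right architecture: principal versus non-principal $\chi$, small $t$ via Mertens/Siegel--Walfisz in progressions, and large $t$ via Vinogradov--Korobov. The paper gives no proof of its own here; it imports the bound from Tao--Ter\"av\"ainen, so your argument has to stand on its own. The cases $|t|\leq 1/\log X$ are essentially fine, modulo the cutoff issue noted at the end.

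\textbf{The range $|t|>1/\log X$ is unproved.} Both analytic inputs you quote there are false as stated.

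First, your estimate with $\log(1/(|t|\log X))$ fails already for $q=1$, $\zeta=1$, $|t|=2/\log X$. Every $p\leq X^{1/2}$ has $\Re p^{-it}\geq\cos 1$, so the sum is $\gg\log_2X$, while your right-hand side is $O(1)$. The correct statement is $\sum_{\exp(1/|t|)<p\leq X^2}p^{-1-it}=O(1)$ for $1/\log X\leq|t|\leq1$. For principal $\chi$ this gives
\[
D\bigl(f_{\nu,\alpha},(\cdot)^{it};X^2\bigr)^2\geq(1-\cos 2\pi\alpha)\log\tfrac1{|t|}+\log(|t|\log X)-O(1)\geq\min(1,1-\cos2\pi\alpha)\log_2X-O(1).
\]
Your window $1/\log X<|t|<(\log X)^{-1/2}$ is covered only by the false estimate. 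Near $|t|\approx1/\log X$ the distance really is only about $(1-\cos2\pi\alpha)\log_2X$, so nothing weaker than this two-term bound will do.

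Second, the right-hand side of $\sum_{p\leq X}\chi(p)p^{-1-it}\leq\log\frac{1}{|1+it|-1}+O(1)$ is $\approx-\log|t|$, about $-2\log X$ at $|t|=X^2$. That is below the trivial lower bound $-\log_2X-O(1)$ for the real part, so the inequality is false. Also, at $|t|\approx X^2$, VK gives $\tfrac13-\epsilon$, not $\tfrac12$. What you need is this: by the VK zero-free region for $L(s,\chi)$, uniform in $q\leq Q$ (an exceptional zero is harmless once $|t|\geq1$), one has $\sum_{y<p\leq X^2}\chi(p)p^{-1-it}=O(1)$ whenever $\log y\geq(\log X)^{2/3+\epsilon}$. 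Bounding the primes $p\leq y$ trivially gives $\bigl|\sum_{p\leq X^2}\chi(p)p^{-1-it}\bigr|\leq(\tfrac23+\epsilon)\log_2X+O(1)$. Hence $D^2\geq(\tfrac13-\epsilon)\log_2X-O(1)$ for $1\leq|t|\leq X^2$.

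\textbf{The absorption of $\log_3X$ losses is invalid.} If $\|\alpha\|_{\T}^2\log_2X\asymp\log_3X$, the target $c_0\|\alpha\|_{\T}^2\log_2X-O(1)$ tends to infinity, and no choice of the absolute constant $c_0$ makes it trivial. An additive $O(\log_3X)$ loss proves only $c\|\alpha\|_{\T}^2\log_2X-O(\log_3X)$. That suffices for the paper's application ($\|\alpha\|_{\T}>R^{-1}$) but not for the lemma as stated. No such loss is needed:
\begin{itemize}
\item Dropping primes only decreases a sum of nonnegative terms, so it costs nothing.
\item The non-principal and $|t|\geq1$ cases give $\gg\log_2X$, where additive losses are harmless.
\item For principal $\chi_0$, use $D\bigl(f_{\nu,\alpha},\chi_0(\cdot)(\cdot)^{it};X^2\bigr)^2\geq\tfrac12D\bigl(f_{\nu,\alpha},(\cdot)^{it};X^2\bigr)^2$, valid since $1\geq\tfrac12(1-\Re w)$ for $|w|\leq1$. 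This discards $p\mid q$, and the display above then keeps every loss multiplicative.
\end{itemize}

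\textbf{Siegel--Walfisz cannot start at $p>Q$.} It requires $q\leq(\log p)^{A}$. With $q$ up to $(\log X)^{1/125}$, you must drop the primes below $\exp((\log X)^{\delta})$ and apply it only above that. The class average is then exactly $1$ for non-principal $\chi$, since $\sum_a\chi(a)=0$.
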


\section{The Circle Method and High Frequencies}\label{sec:circle-method}

To prove Theorem \ref{thm:intro-main}, it will suffice to establish the following averaged dyadic statement concerning the normalized joint counts, whose proof is the main objective of this and the following section.

\begin{theorem}\label{thm:intro-dyadic}
There is an absolute constant $c_0>0$ such that, for all sufficiently large $X$,
\begin{equation}\label{eq:intro-dyadic}
 \frac1{\log X}\int_{\sqrt X}^{X}
 \max_{\nu_0,\nu_1\in\{\omega,\Omega\}}
 \sup_{\boldsymbol m\in\Z^2}
 \bigl|\Tilde{P}_x^{\nu_0,\nu_1}(\boldsymbol m)-G_{\log_2x}(\boldsymbol m)\bigr|
 \,\frac{\mathrm dx}{x}
 \ll (\log\log X)^{-1-c_0},
\end{equation}
where, for $\boldsymbol{m} = (m_0, m_1) \in \Z^2$, $\Tilde{P}_x$ is the dyadic normalized count
\begin{align}
 \Tilde{P}_x^{\nu_0,\nu_1}(\boldsymbol m)
 :=\frac1x \cdot \#\{x<n\leq2x:\nu_0(n)=m_0,\ \nu_1(n+1)=m_1\}.
\end{align}
\end{theorem}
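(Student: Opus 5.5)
We prove Theorem~\ref{thm:intro-dyadic} with the circle method of \cite{TaoTeravainen2025}. For fixed $\nu_0,\nu_1$ and $\boldsymbol m$, Fourier inversion on $\Z^2$ gives
\begin{align*}
 \Tilde{P}_x^{\nu_0,\nu_1}(\boldsymbol m)
 =\int_{\T^2} e(-\boldsymbol\alpha\cdot\boldsymbol m)\,
 \frac1x\sum_{x<n\le 2x} f_{\nu_0,\alpha_0}(n)f_{\nu_1,\alpha_1}(n+1)\,d\boldsymbol\alpha .
\end{align*}
We split $\T^2$ into a low-frequency region $\|\alpha_0\|_\T,\|\alpha_1\|_\T\le \varepsilon$ and its complement. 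The threshold is $\varepsilon=(\log_2X)^{-1/2+\theta}$ for a small $\theta>0$. To avoid boundary issues, we split smoothly: we insert the cutoff $\eta(R\alpha_0)\eta(R\alpha_1)$ with $R\approx\varepsilon^{-1}$. By \cref{eq:smoothing-kernel-identity}, the low-frequency piece is then the convolution of the joint distribution with $T_R\otimes T_R$. The crude supremum bound $|\cdot|\le\int|\cdot|$ on the integrand is affordable on the high-frequency part. The reason is that the target $(\log_2X)^{-1-c_0}$ only has to beat a region of measure $\ll 1$.

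\emph{High frequencies.} Suppose $\|\alpha_0\|_\T\ge\varepsilon$ (the case $\|\alpha_1\|_\T\ge \varepsilon$ is symmetric via the shift $(1,0)$). By Lemma~\ref{lem:nonpret}, $\exp M(f_{\nu_0,\alpha_0};X^2,(\log X)^{1/125})\gg \exp(c_0(\log_2X)^{2\theta})$. Since we need only power savings in $\log_2X$, we may take $\mathcal L=(\log_2X)^{A}$ with $A$ large and apply Theorem~\ref{thm:prelim-TT}. This bounds the correlation by $(\log_2X)^{-Ac}$ outside an exceptional set $\mathcal E_{\boldsymbol\alpha}$ of logarithmic measure $\ll (\log_2 X)^{-Ac}$.

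The difficulty is that $\mathcal E$ depends on $\boldsymbol\alpha$. We handle this as in \cite{TaoTeravainen2025}. We discretize $\boldsymbol\alpha$ to a grid of mesh $(\log_2 X)^{-B}$. Changing $\alpha$ by $\delta$ changes $f_{\nu,\alpha}(n)$ by $O(\delta\,\nu(n))$, which is $O(\delta\log_2X)$ on average. We then take the union of exceptional sets over the $(\log_2 X)^{2B}$ grid points, choosing $A$ large relative to $B$. Alternatively, we integrate the bound over $x$ first and use Fubini, which directly yields the $L^1(dx/x)$ form of \eqref{eq:intro-dyadic}. Either route makes the high-frequency contribution $\ll (\log_2X)^{-1-c_0}$ on average. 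The supremum over $\boldsymbol m$ is harmless, since the bound is uniform in $\boldsymbol m$.

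\emph{Low frequencies.} We must show that $\sum_{\boldsymbol k}T_R(k_0)T_R(k_1)\Tilde P_x(\boldsymbol m-\boldsymbol k)=G_{\log_2x}(\boldsymbol m)+O((\log_2x)^{-1-c_0})$. We also need to recover $\Tilde P_x$ itself from its smoothed version. We write $\nu=\nu_{\le z}+\nu_{>z}$ with $z=x^{1/(\log_2x)^{C}}$. By the fundamental lemma of sieve theory and Mertens, the pair $(\nu_{0,\le z}(n),\nu_{1,\le z}(n+1))$ is modelled by independent sums $\boldsymbol S=\sum_{p\le z}(\boldsymbol X_p)$. Here each summand reflects the local densities of $p\mid n$ and $p\mid n+1$, which are mutually exclusive. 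Rankin's trick \eqref{eq:prelim-rankin} controls moduli exceeding the sieve level. The large-prime part $\nu_{>z}$ takes bounded values with multiplicity (weight $\ll C\log_2\log_2 x$ variance shift), and these are absorbed into the mean/variance.

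A two-dimensional local CLT for $\boldsymbol S$ follows from expanding $\Phi_{\boldsymbol S}$ to third order near $0$. The covariance is $\log_2x\cdot I+O(1)$, and the cross-covariance is $O(1)$ because $\sum_p p^{-2}<\infty$. Comparing with the Gaussian, the $O(1)$ mean/variance corrections change $G_t$ by $O(t^{-3/2})$ via \eqref{eq: Gaussian-Smoothing-Prelim}.

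Finally, we remove the smoothing. We Taylor expand $G_t(\boldsymbol m-\boldsymbol k)$ to second order and use $\sum T_R=1$, $\sum mT_R=0$ and \eqref{eq:kernel-moments}. This shows the smoothed Gaussian is $G_t+O(R^2t^{-2})=G_t+O(t^{-1-2\theta})$ when $R\approx t^{1/2-\theta}$.

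\emph{Main obstacle.} The hardest step is the low-frequency local CLT at the $t^{-1}$ precision level, carried out uniformly for all four pairs $(\nu_0,\nu_1)$. The sieve errors and the $\omega$ versus $\Omega$ prime-power corrections must all be $o(t^{-1})$ after integrating $\Phi$ over the $\varepsilon$-box of area $t^{-1+2\theta}$. This forces $\theta$ small and dictates the final value of $c_0$.
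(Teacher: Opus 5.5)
Your proposal is correct and has the same architecture as the paper. For the high frequencies you use the smooth cutoff $\eta(R\alpha_0)\eta(R\alpha_1)$, Theorem~\ref{thm:prelim-TT} and Lemma~\ref{lem:nonpret}, with Tonelli in $x$. Your ``alternative'' Fubini route is exactly the proof of Proposition~\ref{prop:circle-high}, so the grid discretization is not needed. For the low frequencies you use truncation at $z$, the sieve transfer to the random model, and Gaussian smoothing.

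The genuine difference is the scale. You take $R\approx(\log_2X)^{1/2-\theta}$, while the paper takes $R=(\log_2X)^{1/4}$ and mentions and sets aside a larger $R$ at the end of Section~\ref{subsubsec:low-truncation}.

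\emph{The paper's regime.} Write $t=\log_2x$. With $R=t^{1/4}$, the crude bound \eqref{eq:naive-truncation} with $W_R\le1$ only gives $t^{-5/8}$. The paper therefore needs Proposition~\ref{prop:truncation-main}, that is, the sieve transfer for general $F$ together with the concentration bound \eqref{eq:concentration-estimate}. That bound in turn rests on the pointwise Theorem~\ref{thm:2d-llt} and the global decay estimate \eqref{eq:global-decay-estimate}.

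\emph{Your regime.} Here the crude argument already suffices, in either of two forms:
\begin{itemize}
\item The pointwise bound $0\le\Delta_i\le(\log_2x)^C+O(1)$ gives $|L_x-\widetilde L_x|\ll t^{C}R^{-3}=t^{C-3/2+3\theta}$.
\item On the Fourier side, $|e(a+b)-e(a)|\le2\pi|b|$ and $\frac1x\sum_{x<n\le2x}\Delta_i(n)\ll C\log_3x$ show that truncation changes $C_x^{\nu_0,\nu_1}(\boldsymbol\alpha)$ by $\ll R^{-1}\log_3x$ on $\operatorname{supp}\Psi_R$. That set has area $\ll R^{-2}$, so the error is $\ll R^{-3}\log_3x$.
\end{itemize}
Moreover, $\mathbf E[T_R(S_0-m_0)T_R(S_1-m_1)]=\int_{\T^2}\Psi_R(\boldsymbol\alpha)e(-\boldsymbol\alpha\cdot\mathbf m)\Phi_{\mathbf S}(\boldsymbol\alpha)\,d\boldsymbol\alpha$. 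So you only need the expansion of $\Phi_{\mathbf S}$ on $\|\boldsymbol\alpha\|\ll R^{-1}=o(1)$, and this works directly for all four models. You need no pointwise local limit theorem, no reduction to $(\omega,\omega)$, and no decay away from the origin.

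\emph{The price.} The smoothing error from Lemma~\ref{lem:Gaussian-Smoothing} weakens to $R^2t^{-2}=t^{-1-2\theta}$. The non-pretentiousness lower bound weakens to $M\gg t^{2\theta}$, which is still enough to admit your polynomial $\mathcal L=(\log_2X)^A$. The bookkeeping gives $c_0=\min\{2\theta,\tfrac12-3\theta-C\}$, so any $c_0<1/5$ is attainable, compared with $1/8$ in the paper.

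Two points need fixing when you write this out.
\begin{itemize}
\item $\nu_{>z}$ cannot be ``absorbed into the mean/variance'' of a model. The fundamental lemma gives no model for primes above $z$, so the large-prime part must be discarded by the Lipschitz bound above. You must also impose $C+3\theta<\tfrac12$ explicitly (or $\theta<\tfrac16$ in the Fourier-side version).
\item The model $\mathbf S$ has mean $(\log_2z)(1,1)+O(1)$ and covariance $(\log_2z)I_2+O(1)$, not $\log_2x$. The shift $\log_2x-\log_2z=C\log_3x$ costs only $O(t^{-3/2}\log_3x)$ via $\|\partial_tG_t\|_\infty\ll t^{-3/2}$, as in the remark following Theorem~\ref{thm:2d-llt}.
\end{itemize}
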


\begin{proof}[Deduction of Theorem~\ref{thm:intro-main}]
We denote by $P_x^{\nu_0,\nu_1}(\boldsymbol m)$ the normalized joint count in \cref{eq:joint-local-frequency}. We initially consider $x\in[X^{3/4},X]$, a fixed range of scales. This allows us to decompose $[1,x]$ into dyadic intervals while keeping all relevant dyadic scales $x/2^j$ inside \([\sqrt X,X]\), where Theorem \ref{thm:intro-dyadic} applies. Applying Theorem \ref{thm:intro-dyadic} will yield the conclusion of Theorem \ref{thm:intro-main} on the restricted range $[X^{3/4},X]$, outside of an exceptional set $\mathcal E_X$ of small logarithmic measure. Since $\mathcal E_X$ depends on $X$, it remains to use these scale-dependent exceptional sets to construct one single set $\mathcal X\subset\mathbb N$ of good scales. Note that there is nothing particular about the exponent $3/4$ and any fixed exponent strictly between
$1/2$ and $1$ would suffice. 

To that end, if we take $J=\lceil C\log_3X\rceil$, with $C>0$ being sufficiently large, dyadic decomposition gives
\begin{align}
 P_x^{\nu_0,\nu_1}(\boldsymbol m)
 =\sum_{j=0}^{J-1}2^{-j-1}
 \Tilde{P}_{x/2^{j+1}}^{\nu_0,\nu_1}(\boldsymbol m)+O(2^{-J}).
 \label{eq:dyadic}
\end{align}
Notice that the scales $x/2^{j+1}$ belong to $[\sqrt X,X]$ for all $0\leq j\leq J-1$. Moreover, two elementary calculations yield $|\log_2({x/2^{j+1}})-\log_2x|\ll J/\log x$ and
$\sup_{\boldsymbol m}|\partial_tG_t(\boldsymbol m)|\ll t^{-3/2}$. Applying the mean value theorem, we obtain
\begin{align}
\sup_{\mathbf m\in\mathbb Z^2}
\left|
G_{\log_2({x/2^{j+1}})}(\mathbf m)
-
G_{\log_2x}(\mathbf m)
\right|
\ll (\log_2X)^{-3/2}\frac{J}{\log X}.
\label{eq:gausiaan-dyadic}
\end{align}
Subtracting $G_{\log_2x}(\mathbf m)$ from both sides of \eqref{eq:dyadic} and using \eqref{eq:gausiaan-dyadic}, we obtain
\begin{align*}
P_x^{\nu_0,\nu_1}(\mathbf m)-G_{\log_2 x}(\mathbf m)
&=
\sum_{j=0}^{J-1}
2^{-j-1}
\left(
\widetilde P_{x/2^{j+1}}^{\nu_0,\nu_1}(\mathbf m)
-
G_{\log_2(x/2^{j+1})}(\mathbf m)
\right)
\notag\\
& \quad +
O\!\left(
2^{-J}
+
(\log_2 X)^{-3/2}\frac{J}{\log X}
\right).
\end{align*}
Taking the maximum over $(\nu_0, \nu_1)$ and the supremum over $\boldsymbol{m}$, and integrating against $\mathrm dx/x$ on $[X^{3/4},X]$, we can apply Theorem \ref{thm:intro-dyadic} to obtain
\begin{align*}
    \frac{1}{\log X}
\int_{X^{3/4}}^{X}
\max_{\nu_0,\nu_1\in\{\omega,\Omega\}}
 \sup_{\boldsymbol m\in\Z^2}
 \bigl|P_x^{\nu_0,\nu_1}(\boldsymbol m)-G_{\log_2x}(\boldsymbol m)\bigr|\,\frac{dx}{x}
\ll
(\log_2 X)^{-1-c_0},
\end{align*}
for some constant $c_0 > 0$. Passing from real to integer scales comes at a negligible error, so we also obtain 
\begin{align*}
    \frac{1}{\log X} \sum_{X^{3/4} \leq x \leq X} \frac{1}{x} \max_{\nu_0,\nu_1\in\{\omega,\Omega\}}
 \sup_{\boldsymbol m\in\Z^2}
 \bigl|P_x^{\nu_0,\nu_1}(\boldsymbol m)-G_{\log_2x}(\boldsymbol m)\bigr| \ll
(\log_2 X)^{-1-c_0},
\end{align*}
where $x$ is now an integer.

A standard application of Markov's inequality alongside the estimate $\log_2 x=\log_2 X+O(1)$ on $[X^{3/4},X]$, shows that there exists a set $\mathcal{E}_X\subset [X^{3/4},X]\cap\mathbb N$ such that
\begin{align*}
\frac{1}{\log X}\sum_{x\in \mathcal{E}_X}\frac{1}{x}
\ll
(\log_2 X)^{-c_0/2},
\end{align*}
and
\begin{align*}
P_x^{\nu_0,\nu_1}(\mathbf m)
=
G_{\log_2 x}(\mathbf m)
+
O\!\left((\log_2 x)^{-1-c_0/2}\right)
\end{align*}
uniformly for $x\in ([X^{3/4},X]\cap\N) \setminus \mathcal{E}_X$, $\mathbf m\in\mathbb Z^2$, and $\nu_0,\nu_1\in\{\omega,\Omega\}$.

In order to finalise deduction of Theorem \ref{thm:intro-main} we need one common set $\mathcal{X}$ of good scales. Therefore, fix $0<\alpha<\log(4/3)$ and define $Y_j:=\exp(\exp(\alpha j)), \, j\geq 1$. It follows that $[Y_j,Y_{j+1}) \subseteq [Y_{j+1}^{3/4},Y_{j+1}]$, and so, for $j$ large enough, we can define the sets
\begin{align*}
\mathcal B_j
:=
\mathcal E_{Y_{j+1}}\cap [Y_j,Y_{j+1}),
\end{align*}
and choosing $j_0$ sufficiently large, we can construct the set
\begin{align*}
\mathcal X
:=
\bigcup_{j\geq j_0}
\left(
[Y_j,Y_{j+1})\cap\mathbb N\setminus\mathcal B_j
\right).
\end{align*}
It is now not too difficult to confirm that Theorem \ref{thm:intro-main} holds with constant $c_0/2$.
\end{proof}

\subsection{Two-frequency Decomposition}\label{subsec:circle-decomposition}

We turn to the proof of Theorem~\ref{thm:intro-dyadic}. Adopting the circle-method approach of \cite[Section~4]{TaoTeravainen2025}, we separate the two Fourier frequencies into a neighbourhood of the origin and its complement. In this subsection we make the reduction precise and in the next subsection, we establish the high-frequency estimate.

The setup throughout the rest of this section and Section \ref{sec:low-frequencies} is as follows. We set $X$ to be a sufficiently large asymptotic
parameter tending to infinity, and set
\begin{align*}
    R:=\big(\log_2X\big)^{1/4},
    \qquad
    I_X:=[\sqrt X,X].
\end{align*}
Throughout this and the following section, unless stated otherwise, all estimates are
uniform for $x\in I_X$, $(\nu_0,\nu_1)\in\{\omega,\Omega\}^2$ and $\mathbf{m}\in \Z^2$. In particular, all implied constants are independent of $x,\nu_0,\nu_1 , \boldsymbol{m}$.
For $x\in I_X$,
\begin{align}
    \log_2X-\log2\leq\log_2x\leq\log_2X,
    \label{eq:log-comparison}
\end{align}
and hence $\log_2x=\log_2X+O(1)$ and
$\log_2x\asymp\log_2X$. We shall use this comparability in error terms.

We proceed to apply the circle method with a pair of frequencies $\boldsymbol\alpha=(\alpha_0,\alpha_1)\in\T^2$. Define
\begin{equation}\label{eq:circle-correlation}
 C_x^{\nu_0, \, \nu_1}(\boldsymbol\alpha)
 :=\frac1x\sum_{x<n\leq2x}
 e\bigl(\alpha_0\nu_0(n)+\alpha_1\nu_1(n+1)\bigr).
\end{equation}
Using the standard orthogonality relation for characters on $\mathbb T$, we obtain the exact identity
\begin{align*}
\one_{\{\nu_0(n)=m_0,\ \nu_1(n+1)=m_1\}}
 =\int_{\T^2}
 e\bigl(\alpha_0(\nu_0(n)-m_0)+\alpha_1(\nu_1(n+1)-m_1)\bigr)
 \,\mathrm d\alpha_1\mathrm d\alpha_0.
\end{align*}
Therefore, averaging over $x<n\leq2x$ yields
\begin{equation}\label{eq:circle-inversion}
 \Tilde{P}_x^{\nu_0,\, \nu_1}(\boldsymbol m)
 =\int_{\T^2}e(-\boldsymbol\alpha\cdot\boldsymbol m)
        C_x^{\nu_0, \, \nu_1}(\boldsymbol\alpha)
        \,\mathrm d\boldsymbol\alpha.
\end{equation}
Following (\ref{eq:smooth-cutoff}) with $a=1, b =2$, we set  $\eta\in C_c^\infty(\mathbb R)$ to be an even function such that
\begin{align*}
    0\leq\eta\leq1,
    \qquad
    \eta=1 \text{ on }[-1,1],
    \qquad
    \operatorname{supp}\eta\subset[-2,2].
\end{align*}
For $\boldsymbol{\alpha}=(\alpha_0,\alpha_1)\in\mathbb T^2$, define
\begin{align}
    \Psi_R(\boldsymbol{\alpha})
    :=
    \eta(R\alpha_0)\eta(R\alpha_1).
    \label{eq:two-dimensional-cutoff}
\end{align}
For $X$ sufficiently large, we have $R>4$, thus having the support of $\eta(R\cdot)$ contained strictly inside $[-1/2,1/2)$, and hence $\Psi_R$ defines a smooth function on $\mathbb T^2$. Moreover, $0\leq\Psi_R\leq1$ and
\begin{align*}
    \Psi_R(\boldsymbol{\alpha}) &= 1
    \qquad\text{if}\qquad
    \max_{i=0,1}\|\alpha_i\|_{\mathbb T}\leq \frac{1}{R},\\
    \operatorname{supp}\Psi_R
    &\subset
    \left\{
        \boldsymbol{\alpha}\in\mathbb T^2:
        \max_{i=0,1}\|\alpha_i\|_{\mathbb T}\leq \frac{2}{R}
    \right\}.
\end{align*}
Inserting $1=\Psi_R+(1-\Psi_R)$ into \eqref{eq:circle-inversion}, write
\begin{equation}\label{eq:circle-split}
 \Tilde{P}_x^{\nu_0, \, \nu_1}(\boldsymbol m)
 =L_x^{\nu_0, \, \nu_1}(\boldsymbol m)+H_x^{\nu_0, \, \nu_1}(\boldsymbol m),
\end{equation}
where
\begin{align}
 L_x^{\nu_0, \, \nu_1}(\boldsymbol m)
 &:=\int_{\T^2}\Psi_R(\boldsymbol\alpha)
       e(-\boldsymbol\alpha\cdot\boldsymbol m)
       C_x^{\nu_0, \, \nu_1}(\boldsymbol\alpha)
       \,\mathrm d\boldsymbol\alpha, \label{eq:circle-low-definition}\\
H_x^{\nu_0, \, \nu_1}(\boldsymbol m)
 &:=\int_{\T^2}(1-\Psi_R(\boldsymbol\alpha))
       e(-\boldsymbol\alpha\cdot\boldsymbol m)
       C_x^{\nu_0, \, \nu_1}(\boldsymbol\alpha)
       \,\mathrm d\boldsymbol\alpha. \label{eq:circle-high-definition}
\end{align}
\begin{remark}
    The dependence of $L_x^{\nu_0, \, \nu_1}(\boldsymbol m)$ and $H_x^{\nu_0, \, \nu_1}(\boldsymbol m)$ on the
    ambient parameter $X$, through $R$, is suppressed in the notation. 
\end{remark}
To obtain estimates independent of $\boldsymbol{m}$, set
\begin{align}
 L_x^*&:=\max_{\nu_0,\nu_1\in\{\omega,\Omega\}}
       \sup_{\boldsymbol m\in\Z^2}
       |L_x^{\nu_0, \, \nu_1}(\boldsymbol m)
                         -G_{\log_2x}(\boldsymbol m)|,
       \label{eq:circle-low-majorant}\\
 H_x^*&:=\max_{\nu_0,\nu_1\in\{\omega,\Omega\}}
       \int_{\T^2}(1-\Psi_R(\boldsymbol\alpha))
       |C_x^{\nu_0, \, \nu_1}(\boldsymbol\alpha)|
       \,\mathrm d\boldsymbol\alpha.
       \label{eq:circle-high-majorant}
\end{align}
In order to establish (\ref{eq:intro-dyadic}), it suffices to prove the high-frequency estimate
\begin{align}
    \int_{I_X} H_x^*\,\frac{dx}{x}
    = o( (\log_2X)^{-1-c_1} \log X),
    \label{eq:circle-high-target}
\end{align}
for some absolute constant $c_1>0$, and the low-frequency estimate
\begin{align}
    L_x^*&\ll(\log_2X)^{-1-c_2}
    \label{eq:circle-low-target}
\end{align}
 for some absolute constant $c_2 >0$. Indeed, since $|e(-\boldsymbol{\alpha}\cdot\boldsymbol m)|=1$ and
$0\leq\Psi_R\leq1$, we obtain
\begin{align*}
    \left|H_x^{\nu_0, \, \nu_1}(\boldsymbol m)\right|
    &\leq
    \int_{\mathbb T^2}
    (1-\Psi_R(\boldsymbol{\alpha}))
    \left|C_x^{\nu_0, \, \nu_1}(\boldsymbol{\alpha})\right|
    \,d\boldsymbol{\alpha} \leq H_x^*.
\end{align*}
Consequently, by the triangle inequality
\begin{align*}
    \max_{\nu_0,\nu_1\in\{\omega,\Omega\}}
    \sup_{\boldsymbol m\in\mathbb Z^2}
    \left|
        \Tilde{P}_x^{\nu_0,\nu_1}(\boldsymbol m)
        -
        G_{\log_2 x}(\boldsymbol m)
    \right|
    &\leq L_x^*+H_x^*.
\end{align*}
Assuming (\ref{eq:circle-high-target}) and (\ref{eq:circle-low-target}), and upon setting $c_0:=\min\{c_1,c_2\}>0$, we obtain (\ref{eq:intro-dyadic}), as desired.

\subsection{The High-frequency Estimate}\label{subsec:circle-high}

The proof of the high-frequency estimate (\ref{eq:circle-high-target}) closely follows the approach of \cite{TaoTeravainen2025} as it primarily relies on Theorem \ref{thm:prelim-TT} and the uniform estimate from Lemma \ref{lem:nonpret} for $\|\alpha\|_{\T}\geq R^{-1}$. We record the necessary technical adaptations in the following stronger estimate that immediately implies (\ref{eq:circle-high-target}).

\begin{proposition}\label{prop:circle-high}
There is an absolute constant $\kappa>0$ such that
\begin{align}
 \frac1{\log X}\int_{I_X}H_x^*\,\frac{\mathrm dx}{x}
 \ll\exp(-\kappa\sqrt{\log_2 X})
\end{align}
for all sufficiently large $X$.
\end{proposition}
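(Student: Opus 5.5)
The plan is to bound $H_x^*$ pointwise in $\boldsymbol\alpha$ by Theorem~\ref{thm:prelim-TT}, and to absorb the exceptional scales using a discretisation of the high-frequency region. On $\operatorname{supp}(1-\Psi_R)$ we have $\max_i\|\alpha_i\|_{\T}\geq 1/R$. By symmetry we treat the region $\|\alpha_0\|_{\T}\geq 1/R$. Here $C_x^{\nu_0,\nu_1}(\boldsymbol\alpha)=\frac1x\sum_{x<n\leq 2x}g_1(n)g_2(n+1)$ with $g_1=f_{\nu_0,\alpha_0}$, $g_2=f_{\nu_1,\alpha_1}$, and these are $1$-bounded and multiplicative. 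Lemma~\ref{lem:nonpret} gives
\[
M\bigl(g_1;X^2,(\log X)^{1/125}\bigr)\geq c_0R^{-2}\log_2X-O(1)=c_0\sqrt{\log_2X}-O(1).
\]
Hence hypothesis \eqref{eq:prelim-TT-hypothesis} holds with $\mathcal L=\exp(c_0'\sqrt{\log_2 X})$, which is admissible since $\mathcal L\leq\log X$. The region $\|\alpha_1\|_{\T}\geq 1/R$ is handled identically with the roles of the shifts exchanged, that is, using $(h_1,h_2)=(1,0)$ with $g_1=f_{\nu_1,\alpha_1}$.

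The main obstacle is that the exceptional set $\mathcal E=\mathcal E(\boldsymbol\alpha,\nu_0,\nu_1)$ depends on $\boldsymbol\alpha$, while $H_x^*$ integrates over a continuum of $\boldsymbol\alpha$. To handle this, discretise. Take a grid $\Gamma\subset\T^2$ of mesh $\delta=\mathcal L^{-c/4}$. Since $|\nu(n)|\ll\log X$, we have
\[
|C_x(\boldsymbol\alpha)-C_x(\boldsymbol\beta)|\ll\|\boldsymbol\alpha-\boldsymbol\beta\|\log X,
\]
and this is too lossy. Instead I would use the bound $|e(\alpha\nu(n))-e(\beta\nu(n))|\ll|\alpha-\beta|\,\nu(n)$ together with $\frac1x\sum_{n\le 2x}\nu(n)\ll\log_2X$, which gives the improved Lipschitz bound
\[
|C_x(\boldsymbol\alpha)-C_x(\boldsymbol\beta)|\ll\|\boldsymbol\alpha-\boldsymbol\beta\|\log_2X.
\]
Each grid point $\boldsymbol\beta$ lies in the slightly enlarged region $\|\beta_0\|_{\T}\geq 1/(2R)$, where the same non-pretentiousness bound holds with a smaller constant. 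So for every $x\notin\mathcal E_\Gamma:=\bigcup_{\boldsymbol\beta\in\Gamma,\nu_0,\nu_1}\mathcal E(\boldsymbol\beta)$ we get
\[
H_x^*\ll\mathcal L^{-c}+\delta\log_2X.
\]
Since $\#\Gamma\ll\delta^{-2}=\mathcal L^{c/2}$, the union bound gives $\frac1{\log X}\int_{\mathcal E_\Gamma}\frac{dx}{x}\ll\mathcal L^{c/2}\mathcal L^{-c}=\mathcal L^{-c/2}$.

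Finally, using the trivial bound $H_x^*\leq 1$ on $\mathcal E_\Gamma$, we obtain
\[
\frac1{\log X}\int_{I_X}H_x^*\frac{dx}{x}\ll\mathcal L^{-c/2}+\mathcal L^{-c/4}\log_2X\ll\exp\bigl(-\kappa\sqrt{\log_2X}\bigr)
\]
for suitable $\kappa>0$, since $\log_2X$ is absorbed by the exponential. The point to double-check is that the dyadic ranges match. Theorem~\ref{thm:prelim-TT} gives sums over $N<n\leq 2N$ with $N\in[\sqrt X,X]$, which is exactly the form of $C_x$ for $x\in I_X$. The implied constants in Lemma~\ref{lem:nonpret} and Theorem~\ref{thm:prelim-TT} are uniform in $\alpha$, $\nu_0$ and $\nu_1$, so $\kappa$ is absolute.
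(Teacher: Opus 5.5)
Your argument is correct, but it takes a different route from the paper. For each fixed $\boldsymbol\alpha$ with $1-\Psi_R(\boldsymbol\alpha)\neq0$ and each pair $(\nu_0,\nu_1)$, the paper applies Theorem~\ref{thm:prelim-TT} with the same $g_1,g_2,h_1,h_2$ and $\mathcal L=\exp(\tfrac{c_*}{2}\sqrt{\log_2X})$ that you use. Splitting the $x$-integral into $\mathcal E_{\boldsymbol\alpha}$ and its complement gives $\frac1{\log X}\int_{I_X}|C_x^{\nu_0,\nu_1}(\boldsymbol\alpha)|\frac{dx}{x}\ll\mathcal L^{-c}$ uniformly in $\boldsymbol\alpha$. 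The paper then bounds the maximum over the four pairs by their sum and uses Tonelli to interchange the $x$-average with the $\boldsymbol\alpha$-integral. So what you call the main obstacle is not one for the estimate actually claimed. Since only an average over $x$ is required, an exceptional set that moves with $\boldsymbol\alpha$ is harmless, and the paper needs no regularity in $\boldsymbol\alpha$, no grid and no union bound, with no loss in the exponent.

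Your discretisation uses the Lipschitz bound $\ll\|\boldsymbol\alpha-\boldsymbol\beta\|\log_2X$ from the mean value of $\Omega$. The mesh $\delta=\mathcal L^{-c/4}$ is far smaller than $1/(2R)$, so the approximating grid points stay in the enlarged region $\|\beta_i\|_{\T}\geq1/(2R)$, and you take a union bound over $\ll\mathcal L^{c/2}$ applications. This costs a power of $\mathcal L$ but proves something stronger. There is a single, $\boldsymbol\alpha$-independent set of scales, of logarithmic measure $\ll\mathcal L^{-c/2}$, outside of which $\sup_{\boldsymbol\alpha}(1-\Psi_R(\boldsymbol\alpha))|C_x^{\nu_0,\nu_1}(\boldsymbol\alpha)|$, and hence $H_x^*$, is $\ll\exp(-\kappa\sqrt{\log_2X})$ pointwise in $x$. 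That is worth having if you want a pointwise-in-$x$ high-frequency bound on good scales, but for the proposition as stated the Tonelli argument is shorter.
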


\begin{proof}
Fix an ordered pair $(\nu_0,\nu_1)\in\{\omega,\Omega\}^2$, and a frequency $\boldsymbol{\alpha}=(\alpha_0,\alpha_1)\in\mathbb T^2$ such that $1-\Psi_R(\boldsymbol{\alpha})\neq 0$. By the defining properties of $\Psi_R$, we have $\Psi_R(\boldsymbol{\alpha})=1$ whenever $\max_{i=0,1}\|\alpha_i\|_{\mathbb T}\leq R^{-1}$. Consequently, there exists an index $i\in\{0,1\}$ such that $ \|\alpha_i\|_{\mathbb T}>R^{-1}$.

We set $f_{\nu,\alpha}(n):=e(\alpha\nu(n))$ as aforementioned. Let
\begin{align*}
    g_1:=f_{\nu_i,\alpha_i},
    \qquad
    h_1:=i,
\end{align*}
and
\begin{align*}
    g_2:=f_{\nu_{1-i},\alpha_{1-i}},
    \qquad
    h_2:=1-i.
\end{align*}
Thus, $h_1\neq h_2$, and
\begin{align*}
    \frac1x
    \sum_{x<n\leq2x}
    g_1(n+h_1)g_2(n+h_2)
    &=
    C_x^{\nu_0,\nu_1}(\boldsymbol{\alpha}).
\end{align*}
By Lemma \ref{lem:nonpret} and $\|\alpha_i\|_{\mathbb T}>R^{-1}$,
\begin{align*}
    M\!\left(
        g_1;X^2,(\log X)^{1/125}
    \right)
    &\geq
    \tilde{c}\|\alpha_i\|_{\mathbb T}^2\log_2X-O(1)\\
    &\geq
    \tilde{c}\sqrt{\log_2X}-O(1).
\end{align*}
Hence, after decreasing the absolute constant if necessary, there
exists an absolute $c_*>0$ such that, for all sufficiently large $X$,
\begin{align}
    M\!\left(
        g_1;X^2,(\log X)^{1/125}
    \right)
    \geq
    c_*\sqrt{\log_2X}.
    \label{eq:pretentious-lower-bound}
\end{align}
We now choose the parameter in Theorem \ref{thm:prelim-TT}. Set
\begin{align*}
    \mathcal L
    :=
    \exp\!\left(\frac{c_*}{2}\sqrt{\log_2X}\right).
\end{align*}
For sufficiently large $X$, we have $1\leq\mathcal L\leq\log X$, since $\big(c_*\sqrt{\log_2X}\big) / 2 \leq \log_2X$.
Moreover, by \eqref{eq:pretentious-lower-bound}, $\exp M\!\left(g_1;X^2,(\log X)^{1/125}\right) \geq \mathcal L^2
    \gg \mathcal L$.
Thus, the non-pretentious hypothesis of Theorem \ref{thm:prelim-TT} is satisfied. Let $c>0$ denote the absolute constant occurring in Theorem \ref{thm:prelim-TT}. Applying the theorem produces a measurable set $\mathcal{E} =\mathcal{E}_{X,(\nu_0,\nu_1),\boldsymbol{\alpha}}\subset I_X$ such that
\begin{align}
    \frac1{\log X}
    \int_{\mathcal{E}}\frac{dx}{x}
    \ll
    \mathcal L^{-c},
    \label{eq:TT-exceptional-measure}
\end{align}
and, for every $x\in I_X\setminus \mathcal{E}$,
\begin{align}
    \left|
        C_x^{\nu_0, \nu_1}(\boldsymbol{\alpha})
    \right|
    \ll
    \mathcal L^{-c}.
    \label{eq:TT-correlation-bound}
\end{align}
Splitting the integral into $\mathcal{E}$ and its complement, and using \eqref{eq:TT-exceptional-measure} and \eqref{eq:TT-correlation-bound}, we obtain
\begin{align*}
    \frac1{\log X}
    \int_{I_X}
    \left|
        C_x^{\nu_0, \nu_1}(\boldsymbol{\alpha})
    \right|
    \frac{dx}{x}
    &\leq
    \frac2{\log X}
    \int_{\mathcal{E}}\frac{dx}{x}+
    \frac1{\log X}
    \int_{I_X\setminus \mathcal{E}}
    \left|
        C_x^{\nu_0, \nu_1}(\boldsymbol{\alpha})
    \right|
    \frac{dx}{x}\\
    &\ll
    \mathcal L^{-c}
\end{align*}
Following the definition of $\mathcal L$, by setting $\kappa := c_*c/2$, we have proved
\begin{align}
    \frac1{\log X}
    \int_{I_X}
    \left|
        C_x^{\nu_0, \nu_1}(\boldsymbol{\alpha})
    \right|
    \frac{dx}{x}
    \ll
    e^{-\kappa\sqrt{\log_2X}}
    \label{eq:uniform-averaged-correlation}
\end{align}
whenever $1-\Psi_R(\boldsymbol{\alpha})\neq0$. The implied constant and $\kappa$ are independent of $(\nu_0, \nu_1)$ and $\boldsymbol{\alpha}$. 
We now return to $H_x^*$. By definition,
\[
    H_x^*=\max_{\nu_0,\nu_1\in\{\omega,\Omega\}}
       \int_{\T^2}(1-\Psi_R(\boldsymbol\alpha))
       |C_x^{\nu_0, \, \nu_1}(\boldsymbol\alpha)|
       \,\mathrm d\boldsymbol\alpha.
\]
Since the integrand is nonnegative, Tonelli's theorem gives
\begin{align*}
    \frac1{\log X}
    \int_{I_X}H_x^*\,\frac{dx}{x}
    &\leq
    \sum_{(\nu_0,\nu_1)\in\{\omega,\Omega\}^2}
    \int_{\mathbb T^2}
    (1-\Psi_R(\boldsymbol{\alpha}))
    \left[
        \frac1{\log X}
        \int_{I_X}
        \left|
            C_x^{\nu_0,\nu_1}(\boldsymbol{\alpha})
        \right|
        \frac{dx}{x}
    \right]
    d\boldsymbol{\alpha}.
\end{align*}
Whenever
$1-\Psi_R(\boldsymbol{\alpha})\neq0$, the inner bracketed term is bounded by
\eqref{eq:uniform-averaged-correlation}. Hence,
\begin{align*}
    \frac1{\log X}
    \int_{I_X}H_x^*\,\frac{dx}{x}
    &\ll
    e^{-\kappa\sqrt{\log_2X}},
\end{align*}
because $0\leq1-\Psi_R\leq1$, and $\mathbb T^2$ has normalised Haar measure one.
\end{proof}

\section{The Low-frequency Estimate}\label{sec:low-frequencies}

We now address the remaining low-frequency estimate \eqref{eq:circle-low-target}. To that end, we expand \eqref{eq:circle-low-definition}, interchange the finite sum and the integral, and apply the one-dimensional identity from \eqref{eq:smoothing-kernel-identity} in each coordinate, to obtain
\begin{align}
 L_x^{\nu_0, \nu_1}(\boldsymbol m)
 &=\frac1{xR^2}\sum_{x<n\leq2x}
   \widehat\eta\!\left(\frac{\nu_0(n)-m_0}{R}\right)
   \widehat\eta\!\left(\frac{\nu_1(n+1)-m_1}{R}\right)
       \notag\\
       &=\frac1x\sum_{x<n\leq2x}
 T_R\bigl(\nu_{0}(n)-m_0\bigr)
 T_R\bigl(\nu_{1}(n + 1)-m_1\bigr).
 \label{eq:circle-low-kernel}
\end{align}
Ultimately, we establish that for $x\in I_X$, $\boldsymbol m\in\mathbb Z^2$, and
$\nu_0,\nu_1\in\{\omega,\Omega\}$,
\begin{align}
    L_x^{\nu_0,\nu_1}(\boldsymbol m)
    =
    G_{\log_2 x}(\boldsymbol m)
    +
    O\!\left((\log_2 X)^{-9/8}\right),
    \label{eq:low-frequency-final}
\end{align}
which immediately implies \eqref{eq:circle-low-target} with $c_2=1/8$.

Following the argument of \cite[Section 4.2]{TaoTeravainen2025}, we first introduce a truncation of the prime-factor counts at a suitable scale \(z\) and use a sieve-theoretic argument to transfer the resulting arithmetic average to a suitable probabilistic model. The main new input is a uniform bivariate local limit theorem for this model, valid simultaneously for all four ordered pairs in $\{\omega,\Omega\}^2$. In addition to providing the Gaussian approximation, the local theorem yields the concentration estimate needed to justify the aforementioned truncation at the two-dimensional lattice scale. Finally, Lemma \ref{lem:smooth-lattice-kernel} allows us to remove the smoothing and recover $G_{\log_2 x}(\boldsymbol m)$ with a power-saving error, thereby establishing \eqref{eq:circle-low-target}.

\subsection{Truncation and the Sieve-theoretic Reduction}\label{subsec:low-sieve-transfer}

\subsubsection{Choice of Scales and Truncation} \label{subsubsec:low-truncation}

The setup remains the same as in the previous section, with two additional
scales $z:=x^{R^{-1/2}}$ and $H:=x^{R^{-1/4}}$, where \(x\in I_X=[\sqrt X,X]\). Thus $z$ and $H$ depend on \(x\) and \(X\), whereas \(R\) depends only on \(X\). In particular,
\begin{align*}
\frac{\log H}{\log z}=R^{1/4},
\qquad
\log_2 z=\log_2 x-\frac12\log R \asymp \log_2 X.
\end{align*}
The scale $z$ serves as a cutoff for the small prime factors of $n$ and $n+1$, allowing us to work with their truncated prime-divisor counts. The scale $H$ is used to restrict the corresponding $z$-smooth components to a range where the sieve-theoretic reduction can be carried out uniformly.
Recall that $n_{\le z}$ denotes the entire \(z\)-smooth component of $n$, including prime-power multiplicities, and that $\nu_{\le z}(n):=\nu(n_{\le z}), \,
\nu\in\{\omega,\Omega\}$. For $\mathbf{m}=(m_0,m_1)\in\mathbb Z^2$, define
\begin{align}
\widetilde L_x^{\nu_0,\nu_1}(\mathbf{m})
:=
\frac1x\sum_{x<n\le2x}
T_R\bigl(\nu_0(n_{\le z})-m_0\bigr)
T_R\bigl(\nu_1((n+1)_{\le z})-m_1\bigr).
\label{eq:low-truncated-kernel}
\end{align}
Our eventual reduction, as in the low-frequency argument of \cite[Section~4.2]{TaoTeravainen2025}, is to replace $L_x^{\nu_0,\nu_1}(\mathbf{m})$ by $\widetilde L_x^{\nu_0,\nu_1}(\mathbf m)$, that is, to replace the full prime-factor counts by their small-prime parts. Before doing so, we emphasise the accuracy required of this replacement.

Consider the scale $\log_2 X$. We know from \cref{eq:log-comparison} that $\log_2x \asymp \log_2X$, and hence the
Gaussian main term $G_{\log_2 x}(\mathbf m)$ appearing in the target estimate \eqref{eq:intro-dyadic} satisfies
\[
\sup_{\mathbf m\in\mathbb Z^2} G_{\log_2 x}(\mathbf m)
\asymp (\log_2 X)^{-1}.
\]
More precisely, for any fixed $C>0$, in the central range $|m_i-\log_2 x| \leq C \sqrt{\log_2X}$, one has $G_{\log_2 x}(\mathbf m)\asymp_C (\log_2 X)^{-1}$. Thus, in order for the replacement of \(L_x^{\nu_0,\nu_1}(\mathbf m)\) by \(\widetilde L_x^{\nu_0,\nu_1}(\mathbf m)\) to be negligible relative to the Gaussian term, the truncation error must be of order $o(({\log_2 X})^{-1})$. For the power-saving estimate required in \eqref{eq:circle-low-target}, we in fact need an error of size \(O(({\log_2 X})^{-1-c})\) for some \(c>0\). This is in natural contrast to the one-frequency argument of \cite{TaoTeravainen2025}, where the relevant local Gaussian term is of order $({\log_2 X})^{-1/2}$. 

Proceeding with a crude truncation estimate is insufficient for our purposes at the present choice of scales. We now isolate precisely where this loss appears. For
\(i\in\{0,1\}\), write
\begin{align}
u_i(n):=\nu_i((n+i)_{\le z})-m_i,
\qquad
\Delta_i(n):=\nu_i(n+i)-\nu_i((n+i)_{\le z}).
\label{eq:truncation-quantities-section4}
\end{align}
Therefore, $\nu_i(n+i)-m_i=u_i(n)+\Delta_i(n)$. For either $\nu_i=\omega$ or $\nu_i=\Omega$, the contribution \(\Delta_i(n)\) comes only from primes exceeding \(z\), and their product divides $n + i$. Consequently,
\begin{align*}
z^{\Delta_i(n)}
\leq (n+i)_{>z}
\leq 2x+1,
\end{align*}
and thus, $0\leq \Delta_i(n) \ll \sqrt R$
by the definition of $z$. Motivated by \eqref{eq:kernel-decay}, fix a sufficiently large integer \(A\) (say \(A=100\)), and define $W_R:\R\to \R$ by
\begin{align}
W_R(u):=
\left(1+\frac{|u|}{R}\right)^{-A}.
\end{align}
If $|v|\ll\sqrt R$ and $0\le\theta\le1$, then
\[
1+\frac{|u+\theta v|}{R}
\asymp
1+\frac{|u|}{R}
\]
uniformly in \(u\). Hence, Lemma \ref{lem:smooth-lattice-kernel} gives
\begin{align}
&|T_R'(u+\theta v)|
\ll R^{-2}W_R(u),\\
&|T_R(u+v)|+|T_R(u)|
\ll R^{-1}W_R(u).
\end{align}
By the fundamental theorem of calculus,
\begin{align}
|T_R(u+v)-T_R(u)| \ll R^{-3/2}W_R(u).
\end{align}
Combining these estimates and telescoping the
product, we obtain
\[
\begin{aligned}
\bigl|
T_R(u_0+v_0)T_R(u_1+v_1)
-
T_R(u_0)T_R(u_1)
\bigr|
&\le
|T_R(u_0+v_0)-T_R(u_0)|\,|T_R(u_1+v_1)| \\
&\quad+
|T_R(u_0)|\,|T_R(u_1+v_1)-T_R(u_1)|\\
&\ll
R^{-5/2}W_R(u_0)W_R(u_1).
\end{aligned}
\]
Consequently,
\begin{align}
\bigl|
L_x^{\nu_0,\nu_1}(\mathbf m)
-
\widetilde L_x^{\nu_0,\nu_1}(\mathbf m)
\bigr|
\ll
R^{-5/2}
\frac1x\sum_{x<n\le2x}
W_R(u_0(n))W_R(u_1(n)).
\label{eq:naive-truncation}
\end{align}
At this point, the naive argument simply uses $W_R\le1$, giving
\begin{align*}
\bigl|
L_x^{\nu_0,\nu_1}(m)
-
\widetilde L_x^{\nu_0,\nu_1}(m)
\bigr|
\ll
R^{-5/2}
=
(\log_2 X)^{-5/8}.
\end{align*}
This does not give the required \(o((\log_2 X)^{-1})\) error, let alone the power saving needed for the low-frequency estimate. One could try to increase $R$ in order to make this crude estimate sufficiently small. If $R=(\log_2 X)^\theta$, then $R^{-5/2}=o((\log_2 X)^{-1})$ whenever $\theta > 2/5$. Such a change, however, requires corresponding changes throughout the argument, including the definitions of \(z\) and \(H\), the high-frequency non-pretentiousness estimate, and the final Gaussian smoothing error. We instead retain the scale chosen above and improve the averaging step. More precisely, in similar fashion to \cite{TaoTeravainen2025}, rather than replacing the two weights by $1$ in \eqref{eq:naive-truncation}, we shall retain them until after the sieve reduction and the bivariate local limit theorem have been established. The concentration estimate obtained there
will provide the required error in \eqref{eq:low-frequency-final}. For now, we work with the truncated version $\widetilde L_x^{\nu_0,\nu_1}(\mathbf{m})$, and justify the validity of this truncation in Proposition \ref{prop:truncation-main}.

\subsubsection{The Sieve-theoretic Reduction}\label{subsubsec:low-exact-sieve}

Recall that we use the scale $H$ to bound the smooth components of $n$ and $n+1$. Essentially, we are able to discard $n \in (x, 2x]$ for which $(n+i)_{\leq z}>H$ at an exponentially small cost, where $i\in\{0,1\}$. We record this approximation without proof in the following lemma, which is due to an elementary argument using Rankin's trick.

\begin{lemma}\label{lem:low-smooth-tail}
There is an absolute constant $c>0$ such that, for $x\in I_X$ and $i\in\{0,1\}$,
\begin{equation}\label{eq:low-arithmetic-tail}
 \frac1x\#\{x<n\leq 2x:(n+i)_{\leq z}>H\}
 \ll\exp(-cR^{1/4}).
\end{equation}
\end{lemma}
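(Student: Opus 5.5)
We bound the set of bad $n$ by a sum over possible $z$-smooth parts and then apply Rankin's trick \eqref{eq:prelim-rankin}. Fix $i\in\{0,1\}$. If $(n+i)_{\le z}=d>H$, then $d\mid n+i$ and $d\in\N_{\le z}$. Hence
\begin{align*}
\frac1x\#\{x<n\le 2x:(n+i)_{\le z}>H\}
\le \frac1x\sum_{\substack{d\in\N_{\le z}\\ H<d\le 3x}}\Bigl(\frac{x}{d}+1\Bigr).
\end{align*}
The restriction $d\le 2x+1$ holds because $d\le n+i$.

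We treat the two terms separately, starting with the ``$+1$'' term. We may split according to whether $d\le x^{1/2}$ or not, but a cleaner route is to avoid it altogether. Since $\#\{d\in\N_{\le z}: d\le 3x\}\le \Psi(3x,z)$, the standard Rankin bound gives $\Psi(3x,z)\ll x\exp(-c\,u\log u)$ with $u=\log x/\log z=R^{1/2}$. This contributes at most $\exp(-cR^{1/2})$ after dividing by $x$. To stay self-contained with the tools stated in the paper, we instead bound $\Psi(3x,z)\le (3x)^{\delta'}\prod_{p\le z}(1-p^{-\delta'})^{-1}$ with $\delta'$ close to $1$. Alternatively, and more simply, we note that each $n$ with $d=(n+i)_{\le z}>H$ is already counted by the main term, if we write the count as $\sum_{d>H}\#\{n: d\mid n+i\}$ and bound $\#\{x<n\le2x: d\mid n+i\}\le x/d+1$ only for $d\le x$. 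The contribution from $d>x$ then comes from at most one $n$ per $d$ with $d=n+i$, i.e.\ $n+i$ itself $z$-smooth, which is again bounded by $\Psi(3x,z)/x$.

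The main term is
\begin{align*}
\sum_{\substack{d\in\N_{\le z}\\ d>H}}\frac1d
\le H^{-\delta}\sum_{d\in\N_{\le z}}d^{\delta-1}
=H^{-\delta}\prod_{p\le z}\bigl(1-p^{\delta-1}\bigr)^{-1}.
\end{align*}
Here we choose $\delta=1/\log z$. By \eqref{euler-product} and Mertens' theorem (Theorem~\ref{thm:Mertens}(i)), the product is
\begin{align*}
\ll \prod_{p\le z}(1-p^{-1})^{-1}\ll \log z.
\end{align*}
Meanwhile $H^{-\delta}=\exp(-\log H/\log z)=\exp(-R^{1/4})$. Since $\log z=R^{-1/2}\log x$, the $\log z$ factor is a genuine loss, and this is the \emph{main obstacle}: it is not absorbed by $\exp(-R^{1/4})$, because $R$ is only a power of $\log_2 X$.

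To fix this, we take $\delta=\lambda/\log z$ with $\lambda=\tfrac12 R^{1/4}$ instead. Then $p^{\delta-1}=p^{-1}e^{\lambda\log p/\log z}$, and
\begin{align*}
\sum_{p\le z}\frac{p^{\delta}-1}{p}\le \frac{e^{\lambda}}{\log z}\sum_{p\le z}\frac{\log p}{p}\cdot\lambda\ll \lambda e^{\lambda},
\end{align*}
using $p^\delta-1\le \delta\log p\, e^{\lambda}$ and Theorem~\ref{thm:Mertens}(ii). This gives
\begin{align*}
\prod_{p\le z}(1-p^{\delta-1})^{-1}\ll \log z\cdot \exp(O(\lambda e^\lambda)),
\end{align*}
which is too large when $\lambda=R^{1/4}/2$. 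So we take $\lambda=\tfrac12\log(R^{1/4})$, giving a product $\ll \log z\,\exp(O(R^{1/4}))$, which is still not good enough.

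The correct balancing is to pull out the factor $\log z$ via the other moment. We use instead the bound $\#\{n\in(x,2x]: d\mid n+i\}\le 2x/d$ for $d\le x$, and weight by $d^{\delta}$ with $\delta=1-\epsilon$, $\epsilon$ small. Equivalently, we compare with the standard Rankin estimate $\sum_{d\in\N_{\le z},d>H}1/d\ll \log z\cdot\exp(-\tfrac12 u\log u)$ for $u=\log H/\log z=R^{1/4}$, which is obtained by taking $\lambda=\log u$. In that case $e^{\lambda}\lambda= u\log u$, and the term $O(\lambda e^\lambda)$ in the exponent has a small constant, whereas $H^{-\delta}=e^{-u\log u}$. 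With this choice the total is $\log z\cdot\exp(-c\,u\log u)$.

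It remains to absorb the $\log z\asymp R^{-1/2}\log x$ loss, and this is where the argument must be organized carefully. Our plan is to avoid the factor entirely by working in the sieve-friendly range: we split $d=(n+i)_{\le z}$ as a product and use that $n+i$ has $(n+i)_{>z}$ free of small primes. The fundamental lemma, with $\mathcal P$ the primes $\le z$ and the sieve applied to $(n+i)/d$, gives
\begin{align*}
\#\{n:(n+i)_{\le z}=d\}\ll \frac{x}{d}\prod_{p\le z}(1-1/p)\ll \frac{x}{d\log z}.
\end{align*}
This extra $1/\log z$ exactly cancels the Euler product, and it is valid for $d\le x^{1/2}$. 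For $d>x^{1/2}$ we use the Rankin bound on $\Psi$ directly, which saves $\exp(-cR^{1/2}\log R)$. Combining the two ranges yields \eqref{eq:low-arithmetic-tail}.
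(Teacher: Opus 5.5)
Your treatment of $H<d\le x^{1/2}$ is sound. Sieving the cofactor $(n+i)/d$ gives $\#\{x<n\le 2x:(n+i)_{\le z}=d\}\ll x/(d\log z)$. This $1/\log z$ cancels the $\log z$ in $\sum_{d\in\N_{\le z},\,d>H}1/d\le H^{-\delta}\prod_{p\le z}(1-p^{\delta-1})^{-1}$ when $\delta=1/\log z$.

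\textbf{The gap is the range $x^{1/2}<d\le 2x+1$.} You dispose of it in one sentence that does not work as stated. The set of $n$ with $(n+i)_{\le z}>x^{1/2}$ is not counted by $\Psi(3x,z)$, which only sees integers that are entirely $z$-smooth.
\begin{itemize}
\item If you fall back on $\sum_{d>x^{1/2}}(x/d+1)$ with Rankin over $d$, the $\log z$ loss you yourself identified returns. You get $x\log z\,e^{-cR^{1/2}\log R}$, and $\log z=R^{-1/2}\log x$ is far larger than $e^{cR^{1/2}\log R}$, since $R^{1/2}\log R\asymp(\log_2X)^{1/8}\log_3X$.
\item If you instead write $n+i=dm$ and sum $\Psi(3x/m,z)$ over $m\le 3\sqrt x$, you lose $\sum_{m\le 3\sqrt x}1/m\asymp\log x$. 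This is again fatal unless you use $m\in\N_{>z}$, which gives $\sum 1/m\ll \log x/\log z=R^{1/2}$.
\item Plain Rankin for $\Psi$ (your ``self-contained'' variant with $\delta'$ near $1$) contains the same factor $\prod_{p\le z}(1-p^{-1})^{-1}\asymp\log z$. You would need the genuinely stronger bound $\Psi(y,z)\ll y e^{-u/2}$ (see \cite{Tenenbaum2015}, Ch.~III.5).
\end{itemize}
One repair within your framework: the upper bound $\#\{m\le y: m\in\N_{>z}\}\ll y/\log z$ holds for all $y\ge z$, so your estimate extends to $d\le x/z$. For $d>x/z$, the cofactor $(n+i)_{>z}<3z$ must be $1$ or a prime $p\in(z,3z)$. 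Then $\Psi(3x,z)+\sum_{z<p<3z}\Psi(3x/p,z)\ll x e^{-cR^{1/2}}$.

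\textbf{A shorter route.} The paper gives no proof beyond pointing to Rankin's trick. The natural argument needs neither the sieve nor a case split: apply Rankin to $n$ rather than to $d$, exactly mirroring the computation of $\mathbf E D_i^{\sigma}$ in the proof of Proposition~\ref{prop:low-sieve-transfer}. With $\delta=1/\log z$, write $(N_{\le z})^{\delta}=\sum_{d\mid N}g(d)$. Here $g\ge 0$ is multiplicative, supported on $\N_{\le z}$, with $g(p^k)=p^{(k-1)\delta}(p^{\delta}-1)$. Then
\[
\frac1x\#\{x<n\le 2x:(n+i)_{\le z}>H\}
\le \frac{H^{-\delta}}{x}\sum_{N\le 3x}(N_{\le z})^{\delta}
\le 3H^{-\delta}\sum_{d}\frac{g(d)}{d}
=3H^{-\delta}\prod_{p\le z}\frac{1-p^{-1}}{1-p^{\delta-1}}
\ll e^{-R^{1/4}}
\]
by \eqref{euler-product}. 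The $\log z$ never arises, because only the increments $p^{\delta}-1\ll\delta\log p$ enter the Euler product. This gives the lemma with $c=1$.
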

\begin{remark}
    The estimate provided by Lemma \ref{lem:low-smooth-tail} was also used in \cite[Section~4.2]{TaoTeravainen2025}. See \cite{Tenenbaum2015} for a more general background on Rankin's method and smooth numbers.
\end{remark}
In light of the result in Lemma \ref{lem:low-smooth-tail}, we make the first important reduction in \eqref{eq:low-truncated-kernel}. Namely, for $d_0,d_1\in\mathbb N_{\leq z}$, define the smooth-part count
\begin{equation}\label{eq:low-exact-count}
 \mathcal N_x(d_0,d_1)
 :=\#\{x<n\leq2x:n_{\leq z}=d_0,\ (n+1)_{\leq z}=d_1\}.
\end{equation}
Notice that $\mathcal N_x(d_0,d_1)=0$ unless $(d_0,d_1)=1$. Consequently, we set 
\begin{equation}\label{eq:low-smooth-pairs}
 \mathcal D_z(H):=
 \{(d_0,d_1)\in\N_{\leq z}^2:(d_0,d_1)=1,\ d_0,d_1\leq H\}.
\end{equation}
We now group the integers $n$ according to their two smooth
components. Set
\begin{align*}
    \mathcal G
    :=
    \left\{
        x<n\leq 2x:
        n_{\leq z}\leq H,\;
        (n+1)_{\leq z}\leq H
    \right\}
\end{align*}
and let $\mathcal{G}^c$ denote its complement. 
Hence, Lemma \ref{lem:low-smooth-tail} gives $\# \mathcal{G}^c/x \ll e^{-cR^{1/4}}$ for some $c > 0$. Now, let $F:\mathbb Z^2\to\mathbb C$ be bounded.  We obtain
\begin{align*}
    \frac1x\sum_{x<n\leq 2x}
    F\bigl(
        \nu_0(n_{\leq z}),
        \nu_{1}((n+1)_{\leq z})
    \bigr)
    &=
    \frac1x\sum_{n\in\mathcal G}
    F\bigl(
        \nu_0(n_{\leq z}),
        \nu_{1}((n+1)_{\leq z})
    \bigr)\\
    & \quad +
    O\left(
        \|F\|_\infty e^{-cR^{1/4}}
    \right).
\end{align*}
For each $n\in\mathcal G$, let $d_0:=n_{\leq z}$ and $d_1:=(n+1)_{\leq z}$.
Then, $d_0,d_1\in\mathbb N_{\leq z}$ and, by the definition of $\mathcal G$, $d_0,d_1\leq H$.
Moreover, $ d_0\mid n$ and $d_1\mid n+1$, which implies that $d_0$ and $d_1$ are coprime.  Thus, $(d_0,d_1)\in\mathcal D_z(H)$. The sets
\begin{align*}
    \left\{
        x<n\leq 2x:
        n_{\leq z}=d_0,\;
        (n+1)_{\leq z}=d_1
    \right\},
    \qquad
    (d_0,d_1)\in\mathcal D_z(H),
\end{align*}
form a disjoint partition of $\mathcal G$.  Therefore,
\begin{align*}
    \sum_{n\in\mathcal G}
    F\bigl(
        \nu_0(n_{\leq z}),
        \nu_{1}((n+1)_{\leq z})
    \bigr)
    =
    \sum_{(d_0,d_1)\in\mathcal D_z(H)}
    F\bigl(
        \nu_0(d_0),
        \nu_1(d_1)
    \bigr)
    \mathcal N_x(d_0,d_1).
\end{align*}
Combining the preceding identities gives
\begin{align}
    \frac1x\sum_{x<n\leq 2x}
    F\bigl(
        \nu_0(n_{\leq z}),
        \nu_{1}((n+1)_{\leq z})
    \bigr)
    &=
    \frac1x
    \sum_{(d_0,d_1)\in\mathcal D_z(H)}
    F\bigl(
        \nu_0(d_0),
        \nu_1(d_1)
    \bigr)
    \mathcal N_x(d_0,d_1)
    \notag\\
    &\quad+
    O\left(
        \|F\|_\infty e^{-cR^{1/4}}
    \right).
    \label{eq:low-grouping}
\end{align}
We further reduce \eqref{eq:low-grouping} by obtaining a uniform asymptotic formula for $\mathcal N_x(d_0,d_1)$, with $(d_0,d_1)\in\mathcal D_z(H)$. 
To that end, for coprime $d_0,d_1\in\N_{\leq z}$, define
\begin{equation}\label{eq:low-pi}
 \pi_z(d_0,d_1):=
 \frac{\varphi(d_0d_1)}{d_0^2d_1^2}
 \prod_{\substack{p\leq z\\p\nmid d_0d_1}}
       \left(1-\frac2p\right),
\end{equation}
and set $\pi_z(d_0,d_1)=0$ when $(d_0,d_1)>1$. The weight $\pi_z(d_0,d_1)$ gives the expected density of the simultaneous conditions $n_{\leq z}=d_0, \, (n+1)_{\leq z}=d_1$. The following lemma makes this precise over $(d_0,d_1)\in\mathcal D_z(H)$. Its proof is a straightforward application of the fundamental lemma of sieve theory, following the sieve-theoretic argument of \cite[Section 4.2]{TaoTeravainen2025}. We provide a short proof for the sake of completeness. 

\begin{lemma}
\label{lem:exact-valuation-sieve}
There exists an absolute constant $c>0$ such that, for all
sufficiently large $X$, uniformly for $x\in I_X$ and
$(d_0,d_1)\in\mathcal D_z(H)$,
\begin{align}
    \mathcal N_x(d_0,d_1)
    &=
    x\,\pi_z(d_0,d_1)
    \left(1+O\!\left(e^{-c\sqrt R}\right)\right)
    +O\!\left(h_0D\log(2D)\right),
    \label{eq:exact-valuation-sieve}
\end{align}
where $h_0:=\varphi\!\left(\operatorname{rad}(d_0d_1)\right)$ and $D:=x^{1/3}$.
\end{lemma}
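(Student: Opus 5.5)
The plan is to rewrite the condition $n_{\leq z}=d_0$, $(n+1)_{\leq z}=d_1$ as a sieve problem and apply the Fundamental Lemma. We have $n_{\leq z}=d_0$ iff $d_0\mid n$ and $n/d_0$ has no prime factor $p\leq z$ with $p\nmid d_0$ beyond those forced, i.e.\ for each $p\mid d_0$ we need $p\nmid n/d_0$, and for $p\leq z$, $p\nmid d_0$ we need $p\nmid n$. Similarly for $n+1$ and $d_1$.

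First, fix residues. Since $(d_0,d_1)=1$, the conditions $d_0\mid n$, $d_1\mid n+1$ define one class $n\equiv a\pmod{d_0d_1}$ by CRT. Write $n=a+d_0d_1 k$. The conditions "$p\nmid n/d_0$ for $p\mid d_0$" and "$p\nmid (n+1)/d_1$ for $p\mid d_1$" say that $n$ lies in a class modulo $p^{v_p(d_0)+1}$ avoiding one bad class, for each $p\mid d_0d_1$. Equivalently, $k$ avoids one residue class mod $p$ for each $p\mid d_0d_1$. Rather than sieving these primes, I would expand them exactly. Modulo $\operatorname{rad}(d_0d_1)$, $k$ ranges over $\prod_{p\mid d_0d_1}(p-1)=h_0$ admissible classes. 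This splits the count into $h_0$ arithmetic progressions $k\equiv b\pmod{\operatorname{rad}(d_0d_1)}$, which explains the factor $h_0$ in the error.

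Within each such progression, sieve the remaining primes $p\leq z$, $p\nmid d_0d_1$, requiring $p\nmid n(n+1)$. Let $\mathcal A$ be the multiset of values $n(n+1)$ for $n$ in $(x,2x]$ in the fixed class modulo $q:=d_0d_1\operatorname{rad}(d_0d_1)$. Then
\[
X=\frac{x}{q}, \qquad g(p)=\frac{2}{p},
\]
and for squarefree $e\mid P(z)$ coprime to $d_0d_1$,
\[
\mathcal A_e=X\,\frac{\tau(e)}{e}+r_e, \qquad |r_e|\leq \tau(e).
\]
The dimension condition \eqref{eq:sieve-dimension} holds with $\kappa=2$ and some absolute $K$ by Mertens' theorem. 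We need $p>2$, and for $p=2$, $n(n+1)$ is always even. If $2\nmid d_0d_1$ the count is zero, and correspondingly the factor $(1-2/2)=0$ makes $\pi_z=0$, consistent with the claim. Take $D=x^{1/3}=z^{s}$ with $s=R^{1/2}/3$. Then $e^{-cs}=e^{-c'\sqrt R}$, and the remainder is $\sum_{e\leq D}\tau(e)\ll D\log(2D)$.

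Now assemble. Each progression gives
\[
\frac{x}{q}\prod_{p\leq z,\ p\nmid d_0d_1}\Bigl(1-\frac2p\Bigr)\bigl(1+O(e^{-c\sqrt R})\bigr)+O(D\log 2D).
\]
Summing over the $h_0$ classes multiplies the main term by $h_0/q$. Since
\[
\frac{h_0}{d_0d_1\operatorname{rad}(d_0d_1)}=\frac{\varphi(d_0d_1)}{(d_0d_1)^2},
\]
this gives exactly $x\pi_z(d_0,d_1)$. The error becomes $O(h_0D\log 2D)$.

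The only care needed is checking that $d_0,d_1\leq H$ keeps $q\leq H^4=x^{4R^{-1/4}}$ much smaller than $x$. This makes the progression counts $x/q+O(1)$ legitimate, with the $O(1)$ absorbed into $r_e$. I also need to verify that the excluded classes mod $p$ for $p\mid d_0d_1$ are handled exactly rather than sieved. I expect this bookkeeping to be the main obstacle, while the sieve application itself is routine.
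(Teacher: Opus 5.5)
Your proposal is correct and follows essentially the same route as the paper: you split into the $h_0$ admissible classes modulo $d_0d_1\operatorname{rad}(d_0d_1)$, sieve the remaining primes $p\le z$, $p\nmid d_0d_1$, with $g(p)=2/p$ and $\kappa=2$, apply the fundamental lemma with $D=x^{1/3}$ and $s=\sqrt R/3$, and dispose of the parity case the same way the paper does. The smallness $q\le H^4$ that you flag is not needed for this lemma, because the count of $n\in(x,2x]$ in a fixed class modulo $eq$ is $x/(eq)+O(1)$ for any modulus; the bound $d_0,d_1\le H$ is only used afterwards, to show that $h_0D\log(2D)$ is negligible.
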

\begin{proof}
Fix $(d_0,d_1)\in\mathcal D_z(H)$ and let $q_0:=d_0d_1\operatorname{rad}(d_0d_1)$. For $X$  sufficiently large, $z\geq2$, and so $n_{\leq z}$ has the same parity as $n$. Hence, $\mathcal N_x(d_0,d_1)=0$ unless exactly one of $d_0,d_1$ is even. Therefore, we may assume that exactly one of $d_0,d_1$ is even.

The imposed valuations at primes dividing $d_0d_1$
determine exactly $h_0$ admissible residue classes
modulo $q_0$. In each such class, the remaining
conditions amount to sieving the primes
$p\le z$ with $p\nmid d_0d_1$, with local density
$g(p)=2/p$. The required condition in \eqref{eq:sieve-dimension} is satisfied with $\kappa = 2$ and some absolute $K \geq 1$, by an application of Mertens' Theorem.

Applying the fundamental lemma of sieve theory with $D$ and $s = \log D / \log z$,  and using $\sum_{r\le D}\mu^2(r)2^{\omega(r)}\ll D\log(2D)$, gives
\begin{align*}
\mathcal N_x(d_0,d_1)
=x\frac{h_0}{q_0}
\prod_{\substack{p\le z\\p\nmid d_0d_1}}
\left(1-\frac2p\right)
\left(1+O(e^{-c\sqrt R})\right)
+O(h_0D\log(2D)).
\end{align*}
Since
\begin{align*}
\frac{h_0}{q_0}
=\frac{\varphi(d_0d_1)}{d_0^2d_1^2},
\end{align*}
the assertion follows from the definition of
$\pi_z(d_0,d_1)$.
\end{proof}

Combining \eqref{eq:low-grouping} with Lemma~\ref{lem:exact-valuation-sieve} gives the form of the sieve reduction that will be used in the next subsection. Indeed, let
$F:\mathbb Z^2\to\mathbb C$ be bounded. We obtain
\begin{align*}
\frac1x\sum_{x<n\le 2x}
F\bigl(
        \nu_0(n_{\leq z}),
        \nu_{1}((n+1)_{\leq z})
    \bigr)
&=
\sum_{(d_0,d_1)\in\mathcal D_z(H)}
F\!\left(\nu_0(d_0),\nu_1(d_1)\right)\pi_z(d_0,d_1)\\
&\qquad\quad
+O\!\left(
\|F\|_\infty e^{-c\sqrt R}
\sum_{(d_0,d_1)\in\mathcal D_z(H)}
\pi_z(d_0,d_1)
\right)\\
&\qquad\quad
+O\!\left(\|F\|_\infty H^2x^{-1/2}\right)
+O\!\left(\|F\|_\infty e^{-cR^{1/4}}\right).
\end{align*}
In the next subsection we shall identify $\pi_z$ as a probability mass function. Hence, the first error term is actually $O\!\left(\|F\|_\infty e^{-c\sqrt R}\right)$. The second error term follows from the elementary estimate $h_0D\log(2D)\ll x^{1/2}$, together with the trivial bound $ \#\mathcal D_z(H)\le H^2$. Moreover, $H^2x^{-1/2} = x^{-1/2+2R^{-1/4}} \leq x^{-1/4}$ for all sufficiently large $X$, and this is negligible compared with
$e^{-cR^{1/4}}$, for $x\in I_X$. Consequently, after decreasing the absolute constant if necessary,
\begin{align}
    \frac1x\sum_{x<n\le 2x}
F\bigl(
        \nu_0(n_{\leq z}),
        \nu_{1}((n+1)_{\leq z})
    \bigr)
=
&\sum_{(d_0,d_1)\in\mathcal D_z(H)}
F\!\left(\nu_0(d_0),\nu_1(d_1)\right)\pi_z(d_0,d_1)
\notag\\
&+O\!\left(\|F\|_\infty e^{-cR^{1/4}}\right).
\label{eq:low-grouping-two}
\end{align}

\subsubsection{The Probabilistic Interpretation}
\label{subsubsec:low-model}

Up to an exponentially small error, \eqref{eq:low-grouping-two} shows that the distribution of the two $z$-smooth components is governed by the weights $\pi_z(d_0,d_1)$. We now interpret these weights probabilistically, and recognise them as the joint law of a random model, introduced in \cite[Definition 1.5]{TaoTeravainen2025}.

For every $p\leq z$, let $(A_{0,p},A_{1,p})$ be an $\N_0^2$-valued
random vector, with these vectors independent for distinct primes, and
with local law
\begin{align*}
 \mathbf P\bigl((A_{0,p},A_{1,p})=(0,0)\bigr)&=1-\frac2p,
       \\
 \mathbf P\bigl((A_{0,p},A_{1,p})=(j,0)\bigr)
 &=\mathbf P\bigl((A_{0,p},A_{1,p})=(0,j)\bigr)
  =\left(1-\frac1p\right)p^{-j}\quad(j\geq1).
\end{align*}
For each prime $p$, the above law is the limiting joint distribution of the \(p\)-adic valuation pair $(v_p(n),v_p(n+1))$.

Next, define the random smooth integers
\begin{align*}
 D_i:=\prod_{p\leq z}p^{A_{i,p}},\qquad i \in \{0, 1\}.
\end{align*}
They are \(z\)-smooth positive integers almost surely, and satisfy $(D_0,D_1)=1$. For every coprime $d_0,d_1\in\N_{\leq z}$, independence across primes and the law of $(A_{0,p},A_{1,p})$  give
\begin{align}
 \mathbf P(D_0=d_0,D_1=d_1)
 &=\prod_{p^a\parallel d_0d_1}\left(1-\frac1p\right)p^{-a}
    \prod_{\substack{p\leq z\\p\nmid d_0d_1}}\left(1-\frac2p\right)
 \notag\\*
 &=\pi_z(d_0,d_1).
 \label{eq:low-model-weight}
\end{align}
The same identity holds for noncoprime pairs, with both sides zero.
In particular,
\begin{equation}\label{eq:low-model-total-mass}
 \sum_{d_0,d_1\in\N_{\leq z}}\pi_z(d_0,d_1)=1.
\end{equation}
For $j\in\mathbb N_0$, define
\begin{align}
q_\omega(j):=\mathbf 1_{\{j\ge 1\}},
\qquad
q_\Omega(j):=j.
\end{align}
Given $(\nu_0,\nu_1)\in \{\omega, \Omega\}^2$, define
\begin{equation}
\mathbf{Y}_p^{(\nu_0,\nu_1)}
:=
\bigl(q_{\nu_0}(A_{0,p}),q_{\nu_1}(A_{1,p})\bigr),
\qquad
\mathbf{S}^{(\nu_0,\nu_1)}
:=
(S_0^{(\nu_0,\nu_1)},S_1^{(\nu_0,\nu_1)})
:=
\sum_{p\le z}\mathbf{Y}_p^{(\nu_0,\nu_1)},
\end{equation}
where the summation is coordinate-wise. By construction,
\begin{align}
S_i^{(\nu_0,\nu_1)}
=
\sum_{p\le z}q_{\nu_i}(A_{i,p})
=
\nu_i(D_i),
\qquad i \in \{0, 1\}.
\end{align}
Therefore, the same collection of random vectors $(A_{0,p},A_{1,p})$ gives rise to all four models, with the choice of $(\nu_0,\nu_1)$ affecting only the
coordinate maps $q_{\nu_0}$ and $q_{\nu_1}$. When $(\nu_0,\nu_1)$ is fixed, we suppress it from the notation and simply write $\mathbf{Y}_p$ and $\mathbf{S}=(S_0,S_1)$.

\begin{proposition}\label{prop:low-sieve-transfer}
There is an absolute constant $c>0$ such that for $x\in I_X$, $(\nu_0,\nu_1)\in \{\omega, \Omega\}^2$, and every bounded
$F:\Z^2\to\C$,
\begin{equation}\label{eq:low-sieve-transfer}
 \frac1x\sum_{x<n\leq2x}
 F\bigl(
        \nu_0(n_{\leq z}),
        \nu_{1}((n+1)_{\leq z})
    \bigr)
 =\mathbf{E} F(\mathbf{S}^{(\nu_0,\nu_1)})
  +O\bigl(\|F\|_\infty e^{-cR^{1/4}}\bigr).
\end{equation}
\end{proposition}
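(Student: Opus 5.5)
Starting from \eqref{eq:low-grouping-two}, which already equates the arithmetic average with
\[
\sum_{(d_0,d_1)\in\mathcal D_z(H)}F(\nu_0(d_0),\nu_1(d_1))\pi_z(d_0,d_1)
\]
up to an error $O(\|F\|_\infty e^{-cR^{1/4}})$, it remains to rewrite this finite sum as an expectation over the random model. I will then show that the pairs excluded by the restriction to $\mathcal D_z(H)$ carry exponentially small probability.

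\textbf{Step 1 (identification).} By \eqref{eq:low-model-weight}, $\pi_z(d_0,d_1)=\mathbf P(D_0=d_0,D_1=d_1)$ for all $d_0,d_1\in\N_{\le z}$, and non-coprime pairs have probability zero. Moreover $S_i^{(\nu_0,\nu_1)}=\nu_i(D_i)$. Summing over all of $\N_{\le z}^2$ therefore gives
\[
\sum_{d_0,d_1\in\N_{\le z}}F(\nu_0(d_0),\nu_1(d_1))\pi_z(d_0,d_1)=\mathbf E F(\mathbf S^{(\nu_0,\nu_1)}).
\]
The sum restricted to $\mathcal D_z(H)$ then differs from $\mathbf E F(\mathbf S)$ by at most
\[
\|F\|_\infty\bigl(\mathbf P(D_0>H)+\mathbf P(D_1>H)\bigr).
\]

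\textbf{Step 2 (tail bound).} This is the only real work. I apply Rankin's trick \eqref{eq:prelim-rankin} with $\delta=1/\log z$:
\[
\mathbf P(D_i>H)\le H^{-\delta}\,\mathbf E D_i^{\delta}=e^{-R^{1/4}}\prod_{p\le z}\mathbf E\, p^{\delta A_{i,p}},
\]
since $\log H/\log z=R^{1/4}$. For each prime,
\[
\mathbf E\, p^{\delta A_{i,p}}=1-\frac1p+\Bigl(1-\frac1p\Bigr)\sum_{j\ge1}p^{(\delta-1)j}=\frac{1-p^{-1}}{1-p^{\delta-1}}
\]
(after adding the mass $1-2p^{-1}$ at $A_{i,p}=0$ and the other-coordinate mass $(1-p^{-1})\sum_j p^{-j}=p^{-1}$). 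Here $p^{\delta-1}\le e\,p^{-1}$ must stay away from $1$; for small $p$ this holds because $\delta\to0$. By \eqref{euler-product} the product over $p\le z$ is $\ll1$, so $\mathbf P(D_i>H)\ll e^{-R^{1/4}}$.

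\textbf{Step 3 (conclusion).} Combining the three displays gives \eqref{eq:low-sieve-transfer}, since the total error is $O(\|F\|_\infty e^{-cR^{1/4}})$ with $c\le1$. All bounds are uniform in $x\in I_X$, in $(\nu_0,\nu_1)$ (the $D_i$ do not depend on $\nu$) and in $F$. I expect no genuine obstacle: the only care needed is the per-prime moment computation in Step 2 and checking that \eqref{euler-product} applies. This requires $z\ge e^2$, which holds for large $X$.
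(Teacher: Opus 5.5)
Your proposal is correct and follows essentially the same route as the paper. You start from \eqref{eq:low-grouping-two} and use \eqref{eq:low-model-weight} to identify the restricted sum with $\mathbf E\bigl[F(\mathbf S)\one_{\{D_0\le H,\,D_1\le H\}}\bigr]$. You then remove the restriction with Rankin's trick at $\sigma=1/\log z$, using the per-prime geometric moment $\frac{1-p^{-1}}{1-p^{\sigma-1}}$, the bound \eqref{euler-product}, and a union bound to get $\mathbf P(D_i>H)\ll e^{-R^{1/4}}$. This is exactly the paper's argument.
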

\begin{remark}
    The implied constant is independent of $F$. In particular, $F$ may depend on $X$, $x$, and the prescribed point $\mathbf m\in\Z^2$.
\end{remark}
\begin{proof}
Recalling \eqref{eq:low-grouping-two},
\begin{align*}
    \frac1x\sum_{x<n\le 2x}
F\bigl(
        \nu_0(n_{\leq z}),
        \nu_{1}((n+1)_{\leq z})
    \bigr)
=
&\sum_{(d_0,d_1)\in\mathcal D_z(H)}
F\!\left(\nu_0(d_0),\nu_1(d_1)\right)\pi_z(d_0,d_1)
\notag\\
&+O\!\left(\|F\|_\infty e^{-cR^{1/4}}\right),
\end{align*} 
and using \eqref{eq:low-model-weight}, the first-term on the right-hand side of \eqref{eq:low-grouping-two} is
\[
 \mathbf E\bigl[F(\mathbf{S}^{(\nu_0,\nu_1)})\one_{\{D_0\leq H,\ D_1\leq H\}}\bigr],
\]
as for the random pair $(D_0,D_1)$, belonging to $\mathcal D_z(H)$ is equivalent simply to $D_0\leq H$ and $D_1\leq H$. It remains to remove this restriction in the model at an acceptable error. We first record the marginal law of each coordinate. For every $j\in\mathbb N_0$,
\begin{align*}
\mathbf P(A_{i,p}=j)
=
\left(1-\frac1p\right)p^{-j}.
\end{align*}
Let $\sigma:=\frac1{\log z}$.  
Independence of the vectors $(A_{0,p},A_{1,p})$ across distinct primes
implies
\begin{align*}
\mathbf{E} D_i^\sigma
&=
\prod_{p\le z}\mathbf{E}\,p^{\sigma A_{i,p}} \\
&=
\prod_{p\le z}
\left(1-\frac1p\right)
\sum_{j\ge0}p^{j(\sigma-1)} \\
&=
\prod_{p\le z}
\frac{1-p^{-1}}{1-p^{\sigma-1}}
\ll 1,
\end{align*}
where the last estimate follows from \eqref{euler-product}. Following the application of Rankin's inequality \eqref{eq:prelim-rankin}, for $i=0,1$,
\begin{align*}
\mathbf P(D_i>H)
\le
H^{-\sigma}\mathbf E D_i^\sigma
\ll H^{-\sigma}=e^{-R^{1/4}}.
\end{align*}
By the union bound,
\begin{align*}
\left|
\mathbf{E}\!\left[
F(\mathbf{S}^{(\nu_0, \nu_1)})
\mathbf 1_{\{D_0>H\text{ or }D_1>H\}}
\right]
\right|
&\ll
\|F\|_\infty e^{-R^{1/4}}.
\end{align*}
Thus, the restriction $D_0,D_1\le H$ may be removed at an exponentially small cost.
\end{proof}

\subsection{A Bivariate Local Limit Theorem}\label{subsec:low-local-limit}
According to Proposition \ref{prop:low-sieve-transfer}, in order to finish the proof of the truncated version of the low-frequency estimate, it remains to understand the law of the random vector $\mathbf{S}^{(\nu_0, \nu_1)}$ introduced in the previous section. To that end, we prove a bivariate local limit theorem for $\mathbf{S}^{(\nu_0, \nu_1)}$, which is the direct two-frequency counterpart of the local limit theorem in \cite[Lemma 4.1]{TaoTeravainen2025}.

The rest of this section is organised as follows. We first calculate the moments of the four models.  We then show that it suffices to establish the local limit theorem for $(\omega,\omega)$, from which the remaining three cases can be deduced. Finally, for the $(\omega,\omega)$ model we give a direct Fourier-type proof of the required local limit theorem.

\subsubsection{Calculation of Moments}
\label{subsubsec:llt-moments}
Recall
\begin{equation}
    \mathbb P(A_{i,p}=j)
    =
    \left(1-\frac1p\right)p^{-j},
    \qquad i\in\{0,1\}, \, j\in\mathbb N_0.
    \label{distribution}
\end{equation}
For $\nu\in\{\omega,\Omega\}$,  $\mathbf{E} q_\nu(A_{i,p})$ and ${\operatorname{\mathbf{Var}}}(q_\nu(A_{i,p}))$ are independent of $i$.

Since $q_\omega(A_{i,p})=\mathbf 1_{\{A_{i,p}\ge1\}}$ is a Bernoulli random variable with parameter $1/p$,
\[
\mathbf{E} q_\omega(A_{i,p})=\frac1p,
\qquad
\operatorname{\mathbf{Var}}(q_\omega(A_{i,p}))=\frac1p-\frac1{p^2}.
\]
For $\Omega$, we have $q_\Omega(A_{i,p}) = A_{i,p}$, and the geometric distribution in \eqref{distribution} gives
\begin{align*}
\mathbf{E} q_\Omega(A_{i,p})=\frac1{p-1},
\quad
\operatorname{\mathbf{Var}}(q_\Omega(A_{i,p}))=\frac{p}{(p-1)^2}.
\end{align*}
In particular, uniformly for $\nu\in\{\omega,\Omega\}$,
\begin{equation}
    \mathbf{E}q_\nu(A_{i,p})=\frac1p+O(p^{-2}), \quad \operatorname{\mathbf{Var}}(q_\nu(A_{i,p})) = \frac1p+O(p^{-2}).
\end{equation}
Now, fix $(\nu_0,\nu_1)\in\{\omega,\Omega\}^2$. Since by construction $A_{0,p}$ and $A_{1,p}$ cannot be both simultaneously positive, it follows that $q_{\nu_0}(A_{0,p})q_{\nu_1}(A_{1,p})=0$ almost surely, and therefore,
\begin{equation}
\operatorname{\mathbf{Cov}}
\bigl(
q_{\nu_0}(A_{0,p}),
q_{\nu_1}(A_{1,p})
\bigr)
=
-\mathbf{E} q_{\nu_0}(A_{0,p})\mathbf{E} q_{\nu_1}(A_{1,p})
=
O(p^{-2}).
\end{equation}
Since the vectors $\mathbf{Y}_p^{(\nu_0,\nu_1)}$ are independent for distinct primes, the mean vector and covariance matrix of $\mathbf{S}^{(\nu_0,\nu_1)}$ are
\begin{align*}
\mu_{(\nu_0,\nu_1)}
:=
\mathbf{E}\mathbf{S}^{(\nu_0,\nu_1)}
&=
\sum_{p\le z}
\bigl(\mathbf{E} q_{\nu_0}(A_{i,p}), \mathbf{E} q_{\nu_1}(A_{i,p})\bigr)
=
(\log_2 z,\log_2 z)+O(1),\\
\Sigma_{(\nu_0,\nu_1)}
:=
\operatorname{\mathbf{Cov}}(\mathbf{S}^{(\nu_0,\nu_1)})
&=
\sum_{p\le z}
\begin{pmatrix}
\operatorname{\mathbf{Var}}(q_{\nu_0}(A_{i,p}))
&
-\mathbf{E} q_{\nu_0}(A_{i,p})\mathbf{E} q_{\nu_1}(A_{i,p})
\\
-\mathbf{E} q_{\nu_0}(A_{i,p})\mathbf{E}q_{\nu_1}(A_{i,p})
&
\operatorname{\mathbf{Var}}(q_{\nu_1}(A_{i,p}))
\end{pmatrix}
\notag\\
&=
\log_2 z \cdot I_2+O(1).
\end{align*}
where $I_2$ denotes the $2 \times 2$ identity matrix. Here we used Mertens' theorem and the convergence of
$\sum_p p^{-2}$. In the first identity, the summation and the error term are understood coordinate-wise, whereas in the second identity, the summation and the error term are understood entry-wise.

We shall also need uniform centered-moment bounds. Writing $\mathbf{Z}_p^{(\nu_0,\nu_1)} := \mathbf{Y}_p^{(\nu_0,\nu_1)} - \mathbf{E} \mathbf{Y}_p^{(\nu_0,\nu_1)}$, we obtain
\begin{equation}
\mathbf{E}\|\mathbf{Z}_p^{(\nu_0,\nu_1)}\|^2\ll \frac1p,
\qquad
\mathbf{E}\|\mathbf{Z}_p^{(\nu_0,\nu_1)}\|^3\ll \frac1p,
\qquad
\sum_{p\le z}
\mathbf{E}\|\mathbf{Z}_p^{(\nu_0,\nu_1)}\|^3
\ll \log_2 z.
\label{eq:centered-moments}
\end{equation}

\subsubsection{Reduction to the \texorpdfstring{$(\omega,\omega)$}{omega--omega} Model}
\label{subsubsec:llt-reduction}

The four pairs $(\omega, \omega), (\omega, \Omega), \, (\Omega, \omega)$ and $(\Omega, \Omega)$ are coupled through the same valuation vectors $(A_{0,p},A_{1,p})$. The identity
\begin{align}
q_\Omega(j)
=
q_\omega(j)+(j-1)_+,
\qquad
(j-1)_+:=\max\{j-1,0\},
\end{align}
shows that $q_\Omega(j)=q_\omega(j)$ for $j=0,1$, and that the two maps differ only when $j\ge2$. Thus the four models differ only on the events \(A_{i,p}\ge2\), corresponding arithmetically to divisibility by $p^2$.

At the level of weak convergence, this correction is negligible after the natural normalization, and one could use L\'evy's continuity theorem \cite[Theorem 4.3]{kallenberg2002} to deduce the same qualitative Gaussian limit for all four pairs from the $(\omega,\omega)$ case. For the local limit theorem (Theorem \ref{thm:2d-llt}), however, we require a uniform quantitative approximation to the probabilities
\[
\mathbb P\!\left(\mathbf{S}^{(\nu_0,\nu_1)}= \mathbf{a}\right),
\qquad \mathbf{a}\in\mathbb Z^2.
\]
We therefore compare each model $\mathbf{S}^{(\nu_0,\nu_1)}$ with the $(\omega,\omega)$ reference model at the level of characteristic functions. For notational clarity, we write
\[
\mathbf{B}_p:=\mathbf{Y}_p^{(\omega,\omega)},
\qquad
\mathbf{B}:=\sum_{p\le z}\mathbf{B}_p=\mathbf{S}^{(\omega,\omega)}.
\]

\begin{proposition}
\label{comparison-models}
For sufficiently large $z$, and 
$(\nu_0,\nu_1)\in\{\omega,\Omega\}^2$,
\begin{equation}
\sup_{\mathbf{a}\in\mathbb Z^2}
\left|
\mathbb P\!\left(\mathbf{S}^{(\nu_0,\nu_1)}=\mathbf{a}\right)
-
\mathbb P(\mathbf{B}=\mathbf{a})
\right|
\ll
(\log_2 z)^{-3/2}.
\end{equation}
\end{proposition}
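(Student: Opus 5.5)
The plan is to compare the two laws through their characteristic functions and to bound the difference in $L^1(\mathbb T^2)$; the per-prime independence makes both characteristic functions Euler products. By Fourier inversion on $\mathbb Z^2$ (Subsection~\ref{subsec:prelim-Fourier}),
\begin{align*}
\sup_{\mathbf a\in\mathbb Z^2}\left|\mathbb P(\mathbf S=\mathbf a)-\mathbb P(\mathbf B=\mathbf a)\right|
\leq \int_{\mathbb T^2}\left|\Phi_{\mathbf S}(\boldsymbol\alpha)-\Phi_{\mathbf B}(\boldsymbol\alpha)\right|d\boldsymbol\alpha ,
\end{align*}
and independence across primes gives $\Phi_{\mathbf S}=\prod_{p\le z}\phi_p$ and $\Phi_{\mathbf B}=\prod_{p\le z}\psi_p$, where $\phi_p:=\Phi_{\mathbf Y_p}$ and $\psi_p:=\Phi_{\mathbf B_p}$. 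Telescoping yields
\begin{align*}
\Phi_{\mathbf S}-\Phi_{\mathbf B}=\sum_{p\le z}(\phi_p-\psi_p)\prod_{q<p}\phi_q\prod_{q>p}\psi_q .
\end{align*}

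First, the local difference. The vectors $\mathbf Y_p$ and $\mathbf B_p$ agree unless $A_{0,p}\ge2$ or $A_{1,p}\ge2$, an event of probability $\ll p^{-2}$. On that event they differ by the vector $\bigl((A_{0,p}-1)_+\mathbf 1_{\nu_0=\Omega},(A_{1,p}-1)_+\mathbf 1_{\nu_1=\Omega}\bigr)$. Using $|e(\boldsymbol\alpha\cdot\mathbf v)-1|\ll\|\boldsymbol\alpha\|\,\|\mathbf v\|$ (with $\boldsymbol\alpha$ represented in $[-1/2,1/2)^2$) and the geometric tails, we get
\begin{align*}
|\phi_p(\boldsymbol\alpha)-\psi_p(\boldsymbol\alpha)|\le\mathbf E\bigl[\|\mathbf Y_p-\mathbf B_p\|\bigr]\,2\pi\|\boldsymbol\alpha\|\ll p^{-2}\|\boldsymbol\alpha\| .
\end{align*}

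Second, and this is the step I expect to be the main obstacle, we need a uniform decay bound for the factors: there is an absolute $q_0$ such that, for all $q>q_0$ and all four models,
\begin{align*}
|\phi_q(\boldsymbol\alpha)|,\ |\psi_q(\boldsymbol\alpha)|\le 1-\frac{c}{q}\bigl(\|\alpha_0\|_{\mathbb T}^2+\|\alpha_1\|_{\mathbb T}^2\bigr).
\end{align*}
For $\psi_q$ this is explicit: $\psi_q=1-\tfrac2q+\tfrac1q\bigl(e(\alpha_0)+e(\alpha_1)\bigr)$, so
\begin{align*}
\psi_q=1-\tfrac1q\bigl(1-e(\alpha_0)\bigr)-\tfrac1q\bigl(1-e(\alpha_1)\bigr).
\end{align*}
Its real part is $1-\tfrac{2}{q}\bigl(\sin^2\pi\alpha_0+\sin^2\pi\alpha_1\bigr)$, and its imaginary part is $O(1/q)$. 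Hence $|\psi_q|^2\le 1-\tfrac{c}{q}(\|\alpha_0\|^2+\|\alpha_1\|^2)+O(q^{-2})$, which is enough for $q>q_0$. For $\phi_q$, the law of $\mathbf Y_q$ differs from that of $\mathbf B_q$ by total mass $O(q^{-2})$, so $|\phi_q-\psi_q|\ll q^{-2}$ and the same bound follows after enlarging $q_0$. For $q\le q_0$ we use only $|\phi_q|,|\psi_q|\le1$. In the $p$-th telescoping term we also drop the factor at $p$ itself. Together with $\sum_{q_0<q\le z}1/q=\log_2z+O(1)$ and $1-t\le e^{-t}$, this gives
\begin{align*}
\Bigl|\prod_{q<p}\phi_q\prod_{q>p}\psi_q\Bigr|\ll\exp\bigl(-c\|\boldsymbol\alpha\|^2\log_2 z\bigr),
\end{align*}
uniformly in $p$.

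Combining the two steps and summing $\sum_p p^{-2}<\infty$,
\begin{align*}
|\Phi_{\mathbf S}(\boldsymbol\alpha)-\Phi_{\mathbf B}(\boldsymbol\alpha)|\ll\|\boldsymbol\alpha\|\exp\bigl(-c\|\boldsymbol\alpha\|^2\log_2 z\bigr).
\end{align*}
Integrating over $[-1/2,1/2)^2$ in polar coordinates with the substitution $\boldsymbol\alpha=\boldsymbol\beta/\sqrt{\log_2 z}$ gives $\ll(\log_2 z)^{-3/2}\int_{\mathbb R^2}\|\boldsymbol\beta\|e^{-c\|\boldsymbol\beta\|^2}d\boldsymbol\beta\ll(\log_2 z)^{-3/2}$. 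This is the claimed bound, uniformly in $(\nu_0,\nu_1)$, since all constants above are absolute.
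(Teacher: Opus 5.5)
Your architecture is the paper's: Fourier inversion on $\mathbb Z^2$, the telescoping identity, the local bound $|\phi_p-\psi_p|\ll\|\boldsymbol\alpha\|p^{-2}$, a Gaussian bound for the remaining partial products, and integration after rescaling. The gap is in the decay step, where you expected trouble. A perturbation that is $O(q^{-2})$ uniformly in $\boldsymbol\alpha$ cannot be absorbed into the saving $\frac{c}{q}\|\boldsymbol\alpha\|^2$, because that saving tends to $0$ with $\boldsymbol\alpha$. From $|\phi_q|\le|\psi_q|+Cq^{-2}$ you only get $|\phi_q|\le1-\frac{c}{2q}\|\boldsymbol\alpha\|^2$ when $\|\boldsymbol\alpha\|^2\gg1/q$. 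Enlarging $q_0$ does not help, since for every $q$ the range $0<\|\boldsymbol\alpha\|^2\ll1/q$ is nonempty. Your sharper bound $|\phi_q-\psi_q|\ll\|\boldsymbol\alpha\|q^{-2}$ is only linear in $\|\boldsymbol\alpha\|$, so it still fails for $\|\boldsymbol\alpha\|\ll1/q$. The same slip occurs for $\psi_q$: an additive $O(q^{-2})$ in $|\psi_q|^2$ is not ``enough for $q>q_0$''. There it is harmless, because $\operatorname{Im}\psi_q=\frac1q(\sin2\pi\alpha_0+\sin2\pi\alpha_1)=O(\|\boldsymbol\alpha\|/q)$, so the error is really $O(\|\boldsymbol\alpha\|^2q^{-2})$.

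The factor bound you want is true for all four models, and it should be proved directly rather than by perturbation. The paper uses symmetrization. With $\mathbf Y_q'$ an independent copy, $1-|\phi_q|^2=\mathbf E\bigl[1-\cos\bigl(2\pi\boldsymbol\alpha\cdot(\mathbf Y_q-\mathbf Y_q')\bigr)\bigr]$. Keep only the outcomes where one of $\mathbf Y_q,\mathbf Y_q'$ equals $\mathbf 0$ and the other equals $\mathbf e_i$. These have probability $\gg1/q$ for $q\ge3$ in every model, since $\mathbf P(\mathbf Y_q=\mathbf 0)=1-\frac2q$ and $\mathbf P(\mathbf Y_q=\mathbf e_i)\ge(1-\frac1q)\frac1q$. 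This gives $1-|\phi_q|^2\gg\|\boldsymbol\alpha\|^2/q$. Alternatively, repeat your $\psi_q$ computation for $\phi_q$ itself, writing $|\phi_q|^2=1-2\Re(1-\phi_q)+|1-\phi_q|^2$. The atoms at $\mathbf e_0,\mathbf e_1$ give $\Re(1-\phi_q)\ge(1-\frac1q)\frac1q\sum_i(1-\cos2\pi\alpha_i)$, while $|1-\phi_q|\le2\pi\|\boldsymbol\alpha\|\,\mathbf E\|\mathbf Y_q\|\ll\|\boldsymbol\alpha\|/q$. The error is then $O(\|\boldsymbol\alpha\|^2q^{-2})$ and is absorbed for $q>q_0$. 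A third option keeps your perturbation but applies the decay bound only for $q\ge C\|\boldsymbol\alpha\|^{-2}$. The discarded primes then cost only $\|\boldsymbol\alpha\|^2\bigl(\log_2(C\|\boldsymbol\alpha\|^{-2})+O(1)\bigr)=O(1)$ in the exponent. With any of these repairs, the rest of your argument goes through unchanged.
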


\begin{proof}
Fix $(\nu_0,\nu_1)\in\{\omega,\Omega\}^2$. Note that the case $(\nu_0,\nu_1)=(\omega,\omega)$ is trivial. We nevertheless include it in the argument below in order to keep the notation uniform across all four models. For
$\mathbf{t}=(t_0,t_1)\in[-1/2,1/2]^2$, set the local characteristic functions
\begin{align*}
\phi_{p,\nu}(\mathbf{t})
:=
\mathbf{E} e\!\left(\mathbf{t}\cdot \mathbf{Y}_p^{(\nu_0,\nu_1)}\right),
\qquad
\phi_{p,\mathbf{B}}(\mathbf{t})
:=
\mathbf{E} e(\mathbf{t}\cdot \mathbf{B}_p),
\end{align*}
and let
\begin{align*}
\Phi_{z,\nu}(\mathbf{t})
:=
\prod_{p\le z}\phi_{p,\nu}(\mathbf{t}),
\qquad
\Phi_{z,\mathbf{B}}(\mathbf{t})
:=
\prod_{p\le z}\phi_{p,\mathbf{B}}(\mathbf{t})
\end{align*}
be the corresponding full characteristic functions. Under the coupling introduced above, define
\begin{align}
    \mathbf{D}_p^{(\nu_0,\nu_1)}
:=
\mathbf{Y}_p^{(\nu_0,\nu_1)}-\mathbf{B}_p.
\end{align}
The components of the random vector are
\begin{align}
\left(\mathbf{D}_p^{(\nu_0,\nu_1)}\right)_i
=
\mathbf 1_{\{\nu_i=\Omega\}}(A_{i,p}-1)_+,
\qquad i=0,1.
\end{align}
Since $(A_{i,p}-1)_+ = A_{i,p}-\mathbf 1_{\{A_{i,p}\ge1\}},$
the calculations in Section \ref{subsubsec:llt-moments} give
\[
\mathbf{E}(A_{i,p}-1)_+
=
\frac1{p-1}-\frac1p
=
\frac1{p(p-1)}.
\]
Hence
\[
\mathbf{E}\left\|\mathbf{D}_p^{(\nu_0,\nu_1)}\right\|
\ll \frac1{p(p-1)}.
\]
Using the elementary inequality $|e(u)-e(v)|\le 2\pi|u-v|$ for $u, v \in \R$, and the Cauchy-Schwarz inequality, we obtain
\begin{align}
\left|\phi_{p,\nu}(\mathbf{t})-\phi_{p,\mathbf{B}}(\mathbf{t})\right|
&\leq
\mathbf{E}\left|
e\!\left(\mathbf{t}\cdot \mathbf{Y}_p^{(\nu_0,\nu_1)}\right)
-e(\mathbf{t}\cdot \mathbf{B}_p)
\right| \notag\\
&\leq
2\pi\,\mathbf{E}\left|\mathbf{t}\cdot \mathbf{D}_p^{(\nu_0,\nu_1)}\right| \notag\\
&\leq
2\pi\|\mathbf{t}\|\,\mathbf{E}\left\|\mathbf{D}_p^{(\nu_0,\nu_1)}\right\|
\ll
\frac{\|\mathbf{t}\|}{p(p-1)}.
\label{eq:char-1}
\end{align}
We next record a uniform decay estimate for the local characteristic functions. For any of the four pairs and $p \geq 3$,
\begin{align*}
\mathbf{P}(\mathbf{Y}_p=\mathbf{0})=1-\frac2p,
\qquad
\mathbf{P}(\mathbf{Y}_p=\mathbf{e}_i)
\ge
\left(1-\frac1p\right)\frac1p,
\qquad i=0,1,
\end{align*}
where $\mathbf{e}_0=(1,0)$ and $\mathbf{e}_1=(0,1)$. If $\mathbf{Y}_p'$ is an independent copy of
$\mathbf{Y}_p$, then
\begin{align*}
1 - \left|\mathbf{E} e(\mathbf{t}\cdot \mathbf{Y}_p)\right|^2
=
1 - \mathbf{E} e(\mathbf{t}\cdot \mathbf{Y}_p) \overline{\mathbf{E} e(\mathbf{t}\cdot \mathbf{Y}_p)}
&=
1 - \mathbf{E}
\left[
\cos\!\left(2\pi \mathbf{t}\cdot(\mathbf{Y}_p-\mathbf{Y}_p')\right)
\right] \\
&\gg
\frac{1-\cos(2\pi t_0)}p
+
\frac{1-\cos(2\pi t_1)}p \\
&\gg
\frac{\|\mathbf{t}\|^2}{p},
\end{align*}
where the last inequality follows from the basic inequality $1 - \cos{2\pi u} \gg u^2$ which holds whenever $\left|u\right| \leq 1/2$. Therefore, for some absolute $c>0$,
\begin{equation}
\left|\mathbf{E} e(\mathbf{t}\cdot \mathbf{Y}_p)\right|
\le
\exp\!\left(-\frac{c\|\mathbf{t}\|^2}{p}\right),
\qquad p\geq 3.
\label{eq:char-2}
\end{equation}
We now pass from the local comparison to the characteristic functions $\Phi_{z,\nu}$ and $\Phi_{z,\mathbf B}$.  Ordering the primes increasingly and telescoping the two products gives
\begin{align}
\Phi_{z,\nu}(\mathbf{t})-\Phi_{z,\mathbf{B}}(\mathbf{t})
=
\sum_{q\le z}
\bigl(\phi_{q,\nu}(\mathbf{t})-\phi_{q,\mathbf{B}}(\mathbf{t})\bigr)
\prod_{p<q}\phi_{p,\nu}(\mathbf{t})
\prod_{q<p\le z}\phi_{p,\mathbf{B}}(\mathbf{t}).
\label{eq:characteristic-telescoping}
\end{align}
Now, fix a prime $\ell \le z$, and consider the summand corresponding to $q = \ell$ in \eqref{eq:characteristic-telescoping}. Combining the two estimates \eqref{eq:char-1} and \eqref{eq:char-2}, the summand corresponding to $q=\ell$ is bounded by
\begin{align}
\ll
\frac{\|\mathbf t\|}{\ell(\ell-1)}
e^{-c_1(\log_2 z)\|\mathbf t\|^2},
\label{eq:characteristic-telescoping-bound}
\end{align}
for some $c_1 > 0$. Taking absolute values in the telescoping identity and using \eqref{eq:characteristic-telescoping-bound}, we obtain
\begin{align}
|\Phi_{z,\nu}(\mathbf{t})-\Phi_{z,\mathbf{B}}(\mathbf{t})|
&\ll
\sum_{q\le z}
\frac{\|\mathbf{t}\|}{q(q-1)}
\exp\!\left(
-c_1(\log_2 z)\|\mathbf{t}\|^2
\right) \notag\\
&\ll
\|\mathbf{t}\|
\exp\!\left(
-c_1(\log_2 z)\|\mathbf{t}\|^2
\right).
\end{align}
Finally, Fourier inversion on $\Z^2$ gives, uniformly in $\mathbf{a}\in\mathbb Z^2$,
\begin{align*}
\left|
\mathbf{P}\!\left(\mathbf{S}^{(\nu_0,\nu_1)}=\mathbf{a}\right)
-
\mathbf{P}(\mathbf{B}=\mathbf{a})
\right|
&\le
\int_{[-1/2,1/2]^2}
\left|
\Phi_{z,\nu}(\mathbf{t})-\Phi_{z,\mathbf{B}}(\mathbf{t})
\right|\,d\mathbf{t} \\
&\ll
\int_{\mathbb R^2}
\|\mathbf{t}\|
e^{-c_1(\log_2 z)\|\mathbf{t}\|^2}\,d\mathbf{t} \\
&\ll
(\log_2 z)^{-3/2},
\end{align*}
where in the last step we make the change of variables
$\mathbf{u}=\sqrt{\log_2 z}\,\mathbf{t}$.
\end{proof}
We record for later use that the same argument gives
\begin{align}
|\Phi_{z,\nu}(\mathbf{t})|
\ll
\exp\!\left(-c(\log_2 z)\|\mathbf{t}\|^2\right),
\qquad
\mathbf{t}\in[-1/2,1/2]^2,
\label{eq:global-decay-estimate}
\end{align}
uniformly over all four pairs.


\subsubsection{The \texorpdfstring{$(\omega,\omega)$}{omega--omega} Local Limit Theorem}
\label{subsubsec:llt-bernoulli-proof}

Proposition \ref{comparison-models} tells us that if one shows
\[
\mathbf{P}(\mathbf{B}=\mathbf{a})
=
G_{\log_2 z}(\mathbf{a})
+
O\!\left((\log_2 z)^{-3/2}\right)
\]
uniformly in $\mathbf{a}\in\mathbb Z^2$, then the same estimate holds for all four
models $\mathbf{S}^{(\nu_0,\nu_1)}$. Therefore, it remains to establish a local limit theorem for $\mathbf{B}$. 
To that end, set $\mathbf{Z}_p:=\mathbf{B}_p-\mathbf{E} \mathbf{B}_p$ and define the centered characteristic function for $\boldsymbol{t} \in \R^2$
\begin{align}
 \Psi_z(\boldsymbol t)
 :=\mathbf{E} e\bigl(\boldsymbol t\cdot(\mathbf{B}-\boldsymbol\mu_\mathbf{B})\bigr)
 =\prod_{p\leq z}\mathbf{E} e(\boldsymbol t\cdot \mathbf{Z}_p).
\end{align}
Note that $|\Phi_{z,\mathbf{B}}(\mathbf{t})| = |\Psi_z(\mathbf{t})|$.

Before stating and proving the $(\omega, \omega)$ local limit theorem, we record one additional technical result concerning the second-order expansion of the logarithm of a characteristic function, based on \cite[Lemma 4.10]{kallenberg2002}. We explicitly include the uniform remainder and the choice of logarithmic branch needed below.

\begin{lemma}
\label{lem:complex-log}
There exists an absolute constant $\rho>0$ such that, uniformly for
$\|\mathbf t\|\leq \rho$,
\[
\log \Psi_z(\mathbf{t})
=
-2\pi^2 \mathbf{t}^{\mathsf T}\Sigma_{\mathbf{B}} \mathbf{t}
+
O\!\left(\log_2z\|\mathbf{t}\|^3\right).
\]
Here the logarithm denotes the continuous branch equal to zero at the
origin, obtained by summing the principal logarithms of the local centered
characteristic functions.
\end{lemma}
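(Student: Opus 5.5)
The plan is to work prime by prime. Write $\psi_p(\mathbf t):=\mathbf E\, e(\mathbf t\cdot\mathbf Z_p)$, so that $\Psi_z=\prod_{p\le z}\psi_p$, and the logarithm in the statement is $\sum_{p\le z}\operatorname{Log}\psi_p$. Taylor expansion of the exponential, using $\mathbf E\mathbf Z_p=\mathbf 0$, gives
\begin{align*}
\psi_p(\mathbf t)=1-2\pi^2\,\mathbf E(\mathbf t\cdot\mathbf Z_p)^2+r_p(\mathbf t),\qquad |r_p(\mathbf t)|\le \tfrac{(2\pi)^3}{6}\|\mathbf t\|^3\,\mathbf E\|\mathbf Z_p\|^3 .
\end{align*}
The bound on $r_p$ comes from $|e^{iu}-1-iu+u^2/2|\le |u|^3/6$ and Cauchy--Schwarz $|\mathbf t\cdot\mathbf Z_p|\le\|\mathbf t\|\|\mathbf Z_p\|$. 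By \eqref{eq:centered-moments}, both $w_p(\mathbf t):=1-\psi_p(\mathbf t)$ and $r_p(\mathbf t)$ are $O(\|\mathbf t\|^2/p)$ and $O(\|\mathbf t\|^3/p)$ respectively, uniformly in $p$ and in the bounded range of $\mathbf t$.

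Next comes the choice of $\rho$. The vectors $\mathbf Z_p$ are bounded, since each coordinate of $\mathbf B_p$ lies in $\{0,1\}$ and the means are at most $1/2$, so $\|\mathbf Z_p\|\le\sqrt2$ deterministically. Hence $|w_p(\mathbf t)|\le \mathbf E|e(\mathbf t\cdot\mathbf Z_p)-1|\le 2\pi\sqrt2\,\|\mathbf t\|$ for \emph{all} primes, including $p=2$. Choose $\rho$ absolute so that $|w_p|\le1/2$ for $\|\mathbf t\|\le\rho$. Then $\psi_p$ stays in the disc $|1-\zeta|\le 1/2$, the principal logarithm is analytic there, and $\operatorname{Log}\psi_p=-w_p+O(|w_p|^2)$. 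The sum of these principal logarithms is continuous in $\mathbf t$, vanishes at $\mathbf 0$, and exponentiates to $\Psi_z$, so it is the branch described in the statement.

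The remaining work is bookkeeping. Summing over primes,
\begin{align*}
\sum_{p\le z}\operatorname{Log}\psi_p(\mathbf t)=-2\pi^2\sum_{p\le z}\mathbf E(\mathbf t\cdot\mathbf Z_p)^2+O\Bigl(\sum_{p\le z}|r_p|\Bigr)+O\Bigl(\sum_{p\le z}|w_p|^2\Bigr).
\end{align*}
By independence, the first sum equals $\mathbf t^{\mathsf T}\Sigma_{\mathbf B}\mathbf t$. The cubic remainder is $\ll\|\mathbf t\|^3\sum_{p\le z}\mathbf E\|\mathbf Z_p\|^3\ll(\log_2z)\|\mathbf t\|^3$ by \eqref{eq:centered-moments}. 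The quadratic error is the only point needing care, and I expect it to be the main (mild) obstacle. Using $|w_p|\ll\|\mathbf t\|^2/p$ gives $\sum_p|w_p|^2\ll\|\mathbf t\|^4\sum_p p^{-2}\ll\|\mathbf t\|^4\le\rho\|\mathbf t\|^3$, which is $O(\log_2 z\,\|\mathbf t\|^3)$ for $z$ large. Alternatively, one can use $|w_p|^2\le |w_p|\cdot\sup|w_p|\ll\|\mathbf t\|\cdot\|\mathbf t\|^2/p$, which sums to $O(\log_2z\,\|\mathbf t\|^3)$ directly. Either way the claimed expansion follows, with all implied constants absolute because every local bound used was uniform in $p$.
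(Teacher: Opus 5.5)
Your proposal is correct and follows essentially the same route as the paper. Both expand each local factor to second order with a cubic remainder, choose $\rho$ so that every local factor lies in the disc $|\zeta-1|\le 1/2$, sum the principal logarithms, and absorb the quadratic error via $\sum_p|w_p|^2\ll\|\mathbf t\|^4\le\rho\|\mathbf t\|^3$. The only cosmetic difference is that you pick $\rho$ from the Lipschitz bound $|w_p|\le 2\pi\sqrt2\,\|\mathbf t\|$ (using $\|\mathbf Z_p\|\le\sqrt2$), whereas the paper uses $|u_p|\ll\|\mathbf t\|^2/p$; both work.
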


\begin{proof}
For each prime $p\leq z$, write
\[
\psi_p(\mathbf{t}):=\mathbf{E} e(\mathbf{t}\cdot \mathbf{Z}_p),
\qquad
u_p(\mathbf{t}):=\psi_p(\mathbf{t})-1.
\]
Since $\mathbf{E} \mathbf{Z}_p=0$, the second-order Taylor expansion
\[
e(v)
=
1+2\pi i v-2\pi^2v^2+O(|v|^3),
\qquad v\in\mathbb R,
\]
applied with $v=\mathbf{t}\cdot \mathbf{Z}_p$, gives
\[
\psi_p(\mathbf{t})
=
1
-
2\pi^2\mathbf{E}(\mathbf{t}\cdot \mathbf{Z}_p)^2
+
O\!\left(\mathbf{E}|\mathbf{t}\cdot \mathbf{Z}_p|^3
\right).
\]
Now, $\mathbf{E}(\mathbf{t}\cdot \mathbf{Z}_p)^2 = \mathbf{t}^{\mathsf T}\operatorname{\mathbf{Cov}}(\mathbf{B}_p)\mathbf{t}$, while $|\mathbf{t}\cdot \mathbf{Z}_p|
\leq \|\mathbf{t}\|\,\|\mathbf{Z}_p\|$. Hence,
\begin{equation}
u_p(\mathbf{t})
=
-2\pi^2
\mathbf{t}^{\mathsf T}\operatorname{\mathbf{Cov}}(\mathbf{B}_p)\mathbf{t}
+
O\!\left(
\|\mathbf{t}\|^3\mathbb E\|\mathbf{Z}_p\|^3
\right).
\label{eq:Taylor-Characteristic}
\end{equation}
We also need a uniform bound showing that $\psi_p$ remain close
to $1$. Since
\begin{align*}
|e(v)-1-2\pi i v|
\leq 2\pi^2v^2,
\qquad v\in\mathbb R,
\end{align*}
and $\mathbf{E} \mathbf{Z}_p=0$, we obtain
\[
\begin{aligned}
|u_p(\mathbf{t})|
&=
\left|
\mathbf{E}
\bigl(
e(\mathbf{t}\cdot \mathbf{Z}_p)-1-2\pi i\,\mathbf{t}\cdot \mathbf{Z}_p
\bigr)
\right| \\
&\leq
2\pi^2\mathbf{E}|\mathbf{t}\cdot \mathbf{Z}_p|^2 \\
&\leq
2\pi^2\|\mathbf{t}\|^2\mathbf{E}\|\mathbf{Z}_p\|^2
\ll
\frac{\|\mathbf{t}\|^2}{p},
\end{aligned}
\label{eq:Taylor-Bound}
\]
where we used the centered second-moment bound from \eqref{eq:centered-moments}. Choose $\rho\leq1$ sufficiently small so that $|u_p(\mathbf{t})|\leq 1/2$ for every prime $p$ whenever $\|\mathbf{t}\|\leq\rho$. Then, each local factor $1+u_p(\mathbf{t})$ lies in the disk
\[
\{w\in\mathbb C:|w-1|\leq1/2\},
\]
on which the principal logarithm is analytic and satisfies $\operatorname{Log}(1+u)=u+O(|u|^2)$ \cite[Chapter 3.6]{stein2003complex}.
We may therefore define
\begin{align}
\mathcal L_z(\mathbf{t})
:=
\sum_{p\leq z}\operatorname{Log}(1+u_p(\mathbf{t})).
\end{align}
$\mathcal L_z$ is continuous for $\|\mathbf{t}\|\leq\rho$, satisfies
$\mathcal L_z(\mathbf 0)=0$, and
\[
\exp(\mathcal L_z(\mathbf{t}))
=
\prod_{p\leq z}(1+u_p(\mathbf{t}))
=
\prod_{p\leq z}\psi_p(\mathbf{t})
=
\Psi_z(\mathbf{t}).
\]
Thus $\mathcal L_z$ is precisely the branch of $\log\Psi_z$ specified in the statement. Using \eqref{eq:Taylor-Characteristic} and $\operatorname{Log}(1+u_p)=u_p+O(|u_p|^2)$, we obtain
\begin{align}
\log\Psi_z(\mathbf{t})
&=
-2\pi^2
\mathbf{t}^{\mathsf T}
\left(
\sum_{p\leq z}\operatorname{\mathbf{Cov}}(\mathbf{B}_p)
\right)
\mathbf{t} \notag \\
&\quad
+
O\!\left(
\|\mathbf{t}\|^3
\sum_{p\leq z}\mathbf{E}\|\mathbf{Z}_p\|^3
+
\sum_{p\leq z}|u_p(\mathbf{t})|^2
\right).
\end{align}
Since the $B_p$ are independent, $\sum_{p\leq z}\boldsymbol{\operatorname{Cov}}(\mathbf{B}_p) = \Sigma_\mathbf{B}$. Moreover, $|u_p(\mathbf t)|\ll \|\mathbf t\|^2/p$ gives
\begin{align*}
\sum_{p\leq z}|u_p(\mathbf{t})|^2
\ll
\|\mathbf{t}\|^4
\sum_p\frac1{p^2}
\ll
\|\mathbf{t}\|^4.
\end{align*}
Therefore, together with the moment estimates of \eqref{eq:centered-moments}, we obtain
\begin{align*}
\log\Psi_z(\mathbf{t})
=
-2\pi^2\mathbf{t}^{\mathsf T}\Sigma_B\mathbf{t}
+
O\!\left(
\log_2z\|\mathbf{t}\|^3+\|\mathbf{t}\|^4
\right).
\end{align*}
Since $\rho\leq1$ and $\log_2z\geq1$ for sufficiently large $z$, $\|\mathbf{t}\|^4 \leq
\log_2z\|\mathbf{t}\|^3$, and hence
\begin{align*}
\log\Psi_z(\mathbf{t})
=
-2\pi^2\mathbf{t}^{\mathsf T}\Sigma_\mathbf{B}\mathbf{t}
+
O\!\left(
\log_2z\|\mathbf{t}\|^3
\right).
\end{align*}
\end{proof}

\begin{theorem}[Bivariate Local Limit Theorem]
\label{thm:2d-llt}
For $x\in I_X$ and $\mathbf{a}\in\mathbb Z^2$,
\[
\mathbf{P}\!\left(\mathbf{B}=\mathbf{a}\right)
=
G_{\log_2 z}(\mathbf{a})
+
O\!\left((\log_2 z)^{-3/2}\right).
\]
\end{theorem}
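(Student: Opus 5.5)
We follow the standard Fourier route. Set $t:=\log_2 z$, $\boldsymbol\mu:=\boldsymbol\mu_{\mathbf B}$, $\Sigma:=\Sigma_{\mathbf B}$. By Fourier inversion on $\Z^2$,
\[
\mathbf P(\mathbf B=\mathbf a)=\int_{[-1/2,1/2]^2} e\bigl(-\mathbf t\cdot(\mathbf a-\boldsymbol\mu)\bigr)\Psi_z(\mathbf t)\,d\mathbf t .
\]
We split the domain into three ranges: $\|\mathbf t\|\le T:=t^{-1/2}\sqrt{C\log t}$ (central), $T<\|\mathbf t\|\le\rho$ (intermediate), and $\|\mathbf t\|>\rho$ (outer), with $\rho$ as in Lemma~\ref{lem:complex-log}. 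The outer and intermediate ranges are handled by the decay estimate \eqref{eq:global-decay-estimate}, which applies to $\Psi_z$ since $|\Psi_z|=|\Phi_{z,\mathbf B}|$. This gives a contribution $\ll e^{-c\rho^2 t}$ from the outer range. The intermediate range contributes $\ll \int_{\|\mathbf t\|>T}e^{-ct\|\mathbf t\|^2}\,d\mathbf t\ll t^{-1}e^{-cC\log t}$, which is $O(t^{-3/2})$ once $C$ is large.

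In the central range, Lemma~\ref{lem:complex-log} gives $\Psi_z(\mathbf t)=\exp(-2\pi^2\mathbf t^{\mathsf T}\Sigma\mathbf t)\,(1+O(t\|\mathbf t\|^3))$. Here $t\|\mathbf t\|^3\ll t^{-1/2}(\log t)^{3/2}=o(1)$, so $e^{w}-1\ll |w|$ is applicable. Integrating the error against the Gaussian gives $\int t\|\mathbf t\|^3e^{-ct\|\mathbf t\|^2}\,d\mathbf t\ll t\cdot t^{-5/2}=t^{-3/2}$, which is exactly the target size. (The Gaussian $\exp(-2\pi^2\mathbf t^{\mathsf T}\Sigma\mathbf t)$ dominates $e^{-ct\|\mathbf t\|^2}$ because $\Sigma=tI_2+O(1)$.) We then extend the main Gaussian integral back to all of $\R^2$ at a negligible cost. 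Its inverse Fourier transform is exactly the Gaussian density with mean $\boldsymbol\mu$ and covariance $\Sigma$, evaluated at $\mathbf a$.

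It remains to replace $N(\boldsymbol\mu,\Sigma)$ by $G_t$, the law $N((t,t),tI_2)$. This is the step I expect to be the main obstacle, since the naive perturbation of the mean loses too much. The covariance is easy: $\Sigma=tI_2+O(1)$ changes the density by a relative $O(1/t)$, i.e. $O(t^{-2})$ absolutely, via $\|\partial_tG_t\|$-type bounds or directly on the Fourier side. The difficulty is that $\boldsymbol\mu=(t,t)+O(1)$ (Mertens' constant $M$ plus $\sum p^{-2}$ terms), and a shift of size $O(1)$ changes $G_t$ by $\|\nabla G_t\|_\infty\cdot O(1)\ll t^{-3/2}$ by \eqref{eq: Gaussian-Smoothing-Prelim}. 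Fortunately this is also $O(t^{-3/2})$, precisely the allowed error, so the mean shift is affordable. Concretely, I would apply the mean value theorem with \eqref{eq: Gaussian-Smoothing-Prelim}, writing the density as $G$ with variance parameter adjusted, first moving the mean by $O(1)$ and then the variance by $O(1)$. Each step costs $O(t^{-3/2})$, giving the stated uniform bound. Uniformity in $x\in I_X$ is automatic, since all constants depend only on $t=\log_2 z\asymp\log_2X$.
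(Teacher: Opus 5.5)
Your proposal is correct and follows essentially the same route as the paper: Fourier inversion, Lemma~\ref{lem:complex-log} on a central region, the decay bound \eqref{eq:global-decay-estimate} on the rest of $[-1/2,1/2]^2$, evaluation of the Gaussian integral, and a final $O(1)$ mean shift paid for by $\|\nabla G_t\|_\infty\ll t^{-3/2}$ with $t=\log_2 z$. The differences are cosmetic. You cut off at $\|\mathbf u\|\asymp\sqrt{\log t}$ and use $e^w-1\ll|w|$, where the paper takes $U=(\log_2 z)^{1/12}$ and the integral identity for $e^{v+w}-e^v$. You also keep $\Sigma_{\mathbf B}$ in the main term and remove it afterwards, whereas the paper absorbs $\Sigma_{\mathbf B}-(\log_2 z)I_2$ directly into $E_z$. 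That removal must be done on the Fourier side, at cost $O(t^{-2})$, because the off-diagonal entries $-\sum_{p\le z}p^{-2}$ are not captured by the one-parameter family $G_s$.
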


\begin{proof}

By Fourier inversion on $\Z^2$,
\begin{align}
\mathbf{P}(\mathbf{B}=\mathbf{a})
=
\int_{[-1/2,1/2]^2}
e\!\left(-\mathbf{t}\cdot(\mathbf{a}-\mu_{\mathbf{B}})\right)\Psi_z(t)\,d\mathbf{t}.
\end{align}
We make the change of variables $\mathbf{t} = \mathbf{u} / \sqrt{\log_2 z}$ and set
\begin{align*}
Q_z
:=
\left[
-\frac{\sqrt{\log_2 z}}2,
\frac{\sqrt{\log_2 z}}2
\right]^2.
\end{align*}
It follows that
\begin{align}
\mathbf{P}(\mathbf{B}=\mathbf{a})
=
\frac1{\log_2 z}
\int_{Q_z}
e\!\left(
-\mathbf{u}\cdot
\bigg(\frac{\mathbf{a}-\mu_\mathbf{B}}{\sqrt{\log_2 z}}\bigg)
\right)
\Psi_z\!\left(
\frac{\mathbf{u}}{\sqrt{\log_2 z}}
\right)
\,d\mathbf{u}.
\end{align}
Now, set the scale $U:=(\log_2 z)^{1/12}$, so that $U/ \sqrt{\log_2z} \rightarrow 0$.  Thus, for $\|\mathbf{u}\|\le U$, Lemma \ref{lem:complex-log} applies for all sufficiently large $z$. Since $\Sigma_{\mathbf{B}}=(\log_2 z)I_2+O(1)$, we obtain
\begin{align}
\log
\Psi_z\!\left(
\frac{\mathbf{u}}{\sqrt{\log_2 z}}
\right)
=
-2\pi^2\|\mathbf{u}\|^2+E_z(\mathbf{u}),
\label{eq:complex_psi_z}
\end{align}
where
\begin{align*}
|E_z(\mathbf{u})|
\ll
\frac{\|\mathbf{u}\|^2}{\log_2 z}
+
\frac{\|\mathbf{u}\|^3}{\sqrt{\log_2 z}}.
\end{align*}
In fact, since $\|\mathbf{u}\|\le U$, $|E_z(\mathbf{u})| / \|\mathbf{u}\|^2 \ll 1 / \log_2z + U / \sqrt{\log_2z} = o(1)$. Hence, for sufficiently large $z$ and every $s\in[0,1]$,
\begin{align}
    \Re\!\left(
-2\pi^2\|\mathbf{u}\|^2+sE_z(u)
\right)
\le
-\pi^2\|\mathbf{u}\|^2.
\label{eq:gaussian-error-realpart}
\end{align}
We now aim to compare \eqref{eq:complex_psi_z} with the Gaussian $
e^{-2\pi^2\|\mathbf{u}\|^2}$.  Using \cite[Theorem 1.3.2]{stein2003complex}, we have
\begin{align*}
    \exp(v+w)-\exp(v) = w\int_0^1\exp(v+sw)\,ds,
\end{align*}
for $v, w \in \C$, and together with \eqref{eq:gaussian-error-realpart}, we obtain
\begin{align*}
\left|
\Psi_z\!\left(
\frac{\mathbf{u}}{\sqrt{\log_2 z}}
\right)
-
e^{-2\pi^2\|\mathbf{u}\|^2}
\right|
\ll
e^{-\pi^2\|\mathbf{u}\|^2}
\left(
\frac{\|\mathbf{u}\|^2}{\log_2 z}
+
\frac{\|\mathbf{u}\|^3}{\sqrt{\log_2 z}}
\right).
\end{align*}
Since Gaussian moments are finite, i.e. $\int_{\mathbb R^2} \|\mathbf{u}\|^k e^{-\pi^2\|\mathbf{u}\|^2}\,d\mathbf{u}<\infty$, integrating over $\|\mathbf{u}\|\leq U$ yields
\begin{align*}
\int_{\|\mathbf{u}\|\le U}
\left|
\Psi_z\!\left(
\frac{\mathbf{u}}{\sqrt{\log_2 z}}
\right)
-
e^{-2\pi^2\|\mathbf{u}\|^2}
\right|
\,d\mathbf u
\ll
(\log_2 z)^{-1/2}.
\end{align*}
It remains to treat the case $\|\mathbf{u}\|>U$. For this, the decay estimate obtained in \eqref{eq:global-decay-estimate} gives an absolute constant $c > 0$ such that
\[
\left|
\Psi_z\!\left(
\frac{\mathbf{u}}{\sqrt{\log_2 z}}
\right)
\right|
\ll
e^{-c\|\mathbf{u}\|^2},
\qquad \mathbf{u}\in Q_z.
\]
Therefore,
\[
\int_{Q_z\cap\{\|\mathbf{u}\|>U\}}
\left(
\left|
\Psi_z\!\left(
\frac{\mathbf{u}}{\sqrt{\log_2 z}}
\right)
\right|
+
e^{-2\pi^2\|\mathbf{u}\|^2}
\right)
\,du
\ll
e^{-c'U^2}
\]
for some constant $c' > 0$, which shows that the contribution of the region $\|\mathbf{u}\|>U$ is negligible. 
Extending the Gaussian integral from $Q_z$ to $\mathbb R^2$ produces only
an exponentially small error. Hence, for $\mathbf{a}\in\mathbb Z^2$,
\[
\begin{aligned}
\mathbf{P}(\mathbf{B}=\mathbf{a})
&=
\frac1{\log_2 z}
\int_{\mathbb R^2}
e\!\left(
-\mathbf{u}\cdot
\frac{\mathbf{a}-\mu_{\mathbf{B}}}{\sqrt{\log_2 z}}
\right)
e^{-2\pi^2\|\mathbf{u}\|^2}
\,d\mathbf{u}
+
O\!\left((\log_2 z)^{-3/2}\right).
\end{aligned}
\]
Using the two-dimensional Gaussian identity gives
\begin{align}
\mathbf{P}(\mathbf{B}=\mathbf{a})
=
\frac1{2\pi\log_2 z}
\exp\!\left(
-\frac{\|\mathbf{a}-\mu_{\mathbf{B}}\|^2}{2\log_2 z}
\right)
+
O\!\left((\log_2 z)^{-3/2}\right).
\end{align}
By the moment calculations in Section \ref{subsubsec:llt-moments}, $\mu_\mathbf{B}=(\log_2 z,\log_2 z)+O(1)$. Together with the uniform bound $\|\nabla G_t\|_\infty\ll t^{-3/2}$ from the preliminaries, we may replace $\mu_\mathbf{B}$ by
$(\log_2 z,\log_2 z)$ at a cost
$O((\log_2 z)^{-3/2})$. Thus
\[
\mathbf P(\mathbf B=\mathbf{a})
=
G_{\log_2 z}(\mathbf{a})
+
O\!\left((\log_2 z)^{-3/2}\right).
\]
\end{proof}

\begin{remark}
Utilising the estimate $\|\partial_t G_t\|_\infty\ll t^{-3/2}$
given in Section \ref{subsec:prelim-Fourier}, the mean value theorem gives
\begin{align}
\sup_{\mathbf{a}\in\mathbb R^2}
\left|G_{\log_2 z}(\mathbf{a})-G_{\log_2 x}(\mathbf{a})\right|
\ll
(\log R)(\log_2 x)^{-3/2}.
\end{align}
Combining this with the preceding theorem  and using $\log_2 z\asymp\log_2 x$, we obtain
\begin{align}
\mathbf{P}\!\left(\mathbf{S}^{(\nu_0,\nu_1)}=\mathbf{a}\right)
=
G_{\log_2 x}(\mathbf{a})
+
O\!\left((\log R)(\log_2 x)^{-3/2}\right).
\label{eq:preconcentration-estimate}
\end{align}
The theorem also yields the following uniform concentration estimate
\begin{align}
\max_{(\nu_0,\nu_1)\in\{\omega,\Omega\}^2}
\sup_{a\in\mathbb Z^2} \mathbf{P}
\mathbf\!\left(\mathbf{S}^{(\nu_0,\nu_1)}= \mathbf a\right)
\ll
(\log_2 x)^{-1}.
\label{eq:concentration-estimate}
\end{align}
\end{remark}

\subsection{Completion of the Low-frequency Estimate}

We now return to the truncation at the scale $z$ that we postponed in Section \ref{subsubsec:low-truncation}. The concentration estimate \eqref{eq:concentration-estimate} supplied by Theorem \ref{thm:2d-llt} allows us to recover the saving that was missing from the crude estimate in Section \ref{subsubsec:low-truncation}. We then evaluate the resulting truncated expression by Gaussian smoothing.

\begin{proposition}[Truncation] \label{prop:truncation-main}
For $x\in I_X$, $(\nu_0,\nu_1)\in\{\omega,\Omega\}^2$, and
$\mathbf m =(m_0,m_1)\in\mathbb Z^2$, it holds that
\[
\begin{aligned}
\frac1x\sum_{x<n\leq 2x}
\Big|
&T_R\!\left(\nu_0(n)-m_0\right)
 T_R\!\left(\nu_1(n+1)-m_1\right) \\
&-
T_R\!\left(\nu_0(n_{\leq z})-m_0\right)
T_R\!\left(\nu_1((n+1)_{\leq z})-m_1\right)
\Big|
\ll
\frac{1}{(\log_2 x)\sqrt R}.
\end{aligned}
\]
Consequently,
\[
L_x^{\nu_0,\nu_1}(\mathbf m)
=
\widetilde L_x^{\nu_0,\nu_1}(\mathbf m)
+
O\!\left(\frac{1}{(\log_2x)\sqrt R}\right).
\]
\end{proposition}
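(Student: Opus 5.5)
The plan starts from the pointwise bound already derived in Section~\ref{subsubsec:low-truncation}. Since $0\le\Delta_i(n)\ll\sqrt R$, the telescoping argument there gives
\[
\bigl|T_R(u_0+\Delta_0)T_R(u_1+\Delta_1)-T_R(u_0)T_R(u_1)\bigr|\ll R^{-5/2}W_R(u_0(n))W_R(u_1(n)),
\]
where $u_i(n)=\nu_i((n+i)_{\le z})-m_i$. The left-hand side of the proposition is therefore
\[
\ll R^{-5/2}\,\frac1x\sum_{x<n\le 2x}W_R(u_0(n))W_R(u_1(n)).
\]
The task is to show that this average is $\ll R^2/\log_2 x$ rather than the trivial $O(1)$. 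That gives a total of $R^{-1/2}(\log_2x)^{-1}$, as claimed.

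First, the weight depends on $n$ only through the truncated counts. So I apply Proposition~\ref{prop:low-sieve-transfer} with the bounded function
\[
F(\mathbf a):=W_R(a_0-m_0)W_R(a_1-m_1),
\]
which satisfies $\|F\|_\infty\le1$ and may depend on $\mathbf m$. This turns the average into
\[
\mathbf E\, W_R(S_0-m_0)W_R(S_1-m_1)+O(e^{-cR^{1/4}}).
\]
The error is $\ll e^{-cR^{1/4}}$, which is far smaller than $R^2/\log_2x$ because $R=(\log_2X)^{1/4}$ and $\log_2x\asymp\log_2X$.

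Second, I expand the expectation over lattice points:
\[
\mathbf E\,F(\mathbf S)=\sum_{\mathbf a\in\mathbb Z^2}\mathbf P(\mathbf S=\mathbf a)\,W_R(a_0-m_0)W_R(a_1-m_1).
\]
I then use the uniform concentration estimate \eqref{eq:concentration-estimate}, $\sup_{\mathbf a}\mathbf P(\mathbf S^{(\nu_0,\nu_1)}=\mathbf a)\ll(\log_2x)^{-1}$, which holds uniformly over all four pairs. This bounds the sum by
\[
(\log_2x)^{-1}\Bigl(\sum_{k\in\mathbb Z}(1+|k|/R)^{-A}\Bigr)^2\ll(\log_2x)^{-1}R^2,
\]
since $A=100>1$.

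Combining the two steps gives $R^{-5/2}\cdot R^2(\log_2x)^{-1}=R^{-1/2}(\log_2x)^{-1}$. The statement about $L_x^{\nu_0,\nu_1}(\mathbf m)-\widetilde L_x^{\nu_0,\nu_1}(\mathbf m)$ then follows from the triangle inequality, using \eqref{eq:circle-low-kernel} and \eqref{eq:low-truncated-kernel}.

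The step I expect to need most care is the pointwise bound: the uniformity $1+|u+\theta v|/R\asymp1+|u|/R$ for $|v|\ll\sqrt R$, and the fact that $T_R$ and its derivative are applied at the non-integer points $u+\theta\Delta$. This is where the factor $R^{-3/2}$ per coordinate comes from, and it has to be checked uniformly in $u\in\mathbb Z$. The rest of the argument is a direct combination of results already established.
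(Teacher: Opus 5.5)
Your proposal is correct and follows the paper's proof essentially step for step. You start from the pointwise $R^{-5/2}W_R(u_0)W_R(u_1)$ bound, transfer the weight average to the model via Proposition~\ref{prop:low-sieve-transfer} with $F(\mathbf a)=W_R(a_0-m_0)W_R(a_1-m_1)$, and bound the model expectation by $R^2/\log_2 x$ using the concentration estimate \eqref{eq:concentration-estimate}. You also make explicit the lattice sum $\sum_{k\in\mathbb Z}(1+|k|/R)^{-A}\ll R$ that the paper leaves implicit in \eqref{eq:truncation-estimate-prefinal}.
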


\begin{proof}
Recall from the pointwise estimate preceding \eqref{eq:naive-truncation}, we obtained
\begin{align}
&\frac1x\sum_{x<n\leq2x}
\Big|
T_R(\nu_0(n)-m_0)T_R(\nu_1(n+1)-m_1) \notag\\
&\qquad
-
T_R(\nu_0(n_{\leq z})-m_0)
T_R(\nu_1((n+1)_{\leq z})-m_1)
\Big| \notag\\
&\qquad\ll
R^{-5/2}\frac1x\sum_{x<n\leq2x}
W_R(u_0(n))W_R(u_1(n)).
\label{eq:truncation-target-final}
\end{align}
where $u_i(n):=\nu_i((n+i)_{\le z})-m_i$ for $i \in \{0,1\}$, and
\begin{align*}
    W_R(u):=
\left(1+\frac{|u|}{R}\right)^{-A},
\end{align*}
with $A$ being a sufficiently large integer. It therefore remains to estimate the right-hand side of \eqref{eq:truncation-target-final}. This is where Proposition \ref{prop:low-sieve-transfer} being stated for general bounded $F: \Z^2 \mapsto \C$ and the concentration estimate \eqref{eq:concentration-estimate} come directly in use. Namely, for $F(a_0,a_1) :=
W_R(a_0-m_0)W_R(a_1-m_1)$,
\[
\begin{aligned}
\frac1x\sum_{x<n\leq2x}
W_R(u_0(n))W_R(u_1(n))
&=
\mathbf E\!\left[
W_R(S_0-m_0)W_R(S_1-m_1)
\right] +
O\!\left(e^{-cR^{1/4}}\right),
\end{aligned}
\]
where, for the rest of the proof, $\mathbf S=\mathbf{S}^{(\nu_0,\nu_1)} = (S_0, S_1)$. On the other hand, \eqref{eq:concentration-estimate} allows us to obtain 
\begin{align}
\mathbf E\!\left[
W_R(S_0-m_0)W_R(S_1-m_1)
\right]
\ll
\frac{R^2}{\log_2x}.
\label{eq:truncation-estimate-prefinal}
\end{align}
Combining these estimates, and substituting back into \eqref{eq:truncation-target-final}, we obtain
\[
\ll
R^{-5/2}
\left(
\frac{R^2}{\log_2x}
+
e^{-cR^{1/4}}
\right)
\ll
\frac{1}{(\log_2x)\sqrt R}.
\]
Here the exponentially small term is negligible since
$\log_2x\asymp\log_2X=R^4$.
The final assertion follows immediately from the triangle inequality.
\end{proof}

\begin{lemma}[Gaussian smoothing]
\label{lem:Gaussian-Smoothing}
For every $y\geq2$, uniformly in $\mathbf m\in\mathbb Z^2$,
\begin{align}
\sum_{\mathbf a\in\mathbb Z^2}
T_R(a_0-m_0)T_R(a_1-m_1)G_y(\mathbf a)
=
G_y(\mathbf m)
+
O(R^2y^{-2}).
\label{eq:Gaussian-Smoothing}
\end{align}
\end{lemma}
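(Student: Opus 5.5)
We prove \eqref{eq:Gaussian-Smoothing} by a second-order Taylor expansion of $G_y$ around $\mathbf m$, combined with the moment and vanishing properties of $T_R$ from Lemma \ref{lem:smooth-lattice-kernel}. Substitute $\mathbf a=\mathbf m+\mathbf k$ with $\mathbf k=(k_0,k_1)\in\mathbb Z^2$, so that the left-hand side becomes
\[
\sum_{\mathbf k\in\mathbb Z^2}T_R(k_0)T_R(k_1)\,G_y(\mathbf m+\mathbf k).
\]
By Taylor's theorem with Lagrange remainder on the segment from $\mathbf m$ to $\mathbf m+\mathbf k$,
\[
G_y(\mathbf m+\mathbf k)=G_y(\mathbf m)+\nabla G_y(\mathbf m)\cdot\mathbf k+r(\mathbf m,\mathbf k),\qquad |r(\mathbf m,\mathbf k)|\le \tfrac12\|D^2G_y\|_\infty\|\mathbf k\|^2\ll y^{-2}\|\mathbf k\|^2,
\]
where the Hessian bound is \eqref{eq: Gaussian-Smoothing-Prelim}, valid for $y\ge1$.

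Insert this expansion term by term; every series converges absolutely by \eqref{eq:kernel-moments}. The constant term contributes
\[
G_y(\mathbf m)\Bigl(\sum_{k_0}T_R(k_0)\Bigr)\Bigl(\sum_{k_1}T_R(k_1)\Bigr)=G_y(\mathbf m),
\]
using $\sum_m T_R(m)=1$. The linear term splits as $\partial_{u_0}G_y(\mathbf m)\sum_{k_0}k_0T_R(k_0)\cdot\sum_{k_1}T_R(k_1)$ plus the symmetric term. Each vanishes because $\sum_m mT_R(m)=0$.

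For the remainder, bound $\|\mathbf k\|^2=k_0^2+k_1^2$ to obtain
\[
\sum_{\mathbf k}|T_R(k_0)||T_R(k_1)|\,|r(\mathbf m,\mathbf k)|\ll y^{-2}\Bigl(\sum_{k_0}k_0^2|T_R(k_0)|\sum_{k_1}|T_R(k_1)|+\sum_{k_0}|T_R(k_0)|\sum_{k_1}k_1^2|T_R(k_1)|\Bigr).
\]
By \eqref{eq:kernel-moments} with $j=2$ and $j=0$, this is $\ll y^{-2}R^2$, uniformly in $\mathbf m$. This is exactly the claimed error.

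No step is a serious obstacle. The one point needing care is that $T_R$ takes negative values, so we must bound the remainder with absolute values and use the absolute moment estimate \eqref{eq:kernel-moments}, rather than an identity such as $\sum_m m^2T_R(m)$. We also need $\sum_m|T_R(m)|\ll1$, which is the $j=0$ case of that estimate.
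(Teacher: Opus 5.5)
Your proposal is correct and follows essentially the same argument as the paper. Both use a second-order Taylor expansion of $G_y$ around $\mathbf m$ with the Hessian bound, kill the constant and linear terms via $\sum_m T_R(m)=1$ and $\sum_m mT_R(m)=0$, and bound the quadratic remainder using the absolute moment estimates \eqref{eq:kernel-moments} with $j=0$ and $j=2$.
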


\begin{proof}
Writing $\mathbf h=\mathbf a- \mathbf m$, the left-hand side of \eqref{eq:Gaussian-Smoothing} becomes
\begin{align}
\sum_{h\in\mathbb Z^2}
T_R(h_0)T_R(h_1)G_y(\mathbf m+ \mathbf h).
\label{eq:Rewritten-Smoothing}
\end{align}
By the Hessian bound $\|D^2G_y\|_\infty\ll y^{-2}$ from the preliminaries, Taylor's theorem gives
\begin{align}
G_y(\mathbf m+ \mathbf h)
=
G_y(\mathbf m)
+
\nabla G_y(\mathbf m)\cdot \mathbf h
+
O(y^{-2}\|\mathbf h\|^2)\label{eq:Taylor-Smoothing}
\end{align}
uniformly in $\mathbf m$ and $\mathbf h$.

By Lemma \ref{lem:smooth-lattice-kernel}, $\sum_{j\in\mathbb Z}T_R(j)=1, \,
\sum_{j\in\mathbb Z}jT_R(j)=0$. Hence,
\begin{align*}
\sum_{h\in\mathbb Z^2}
T_R(h_0)T_R(h_1)=1, \qquad
\sum_{h\in\mathbb Z^2}
h_iT_R(h_0)T_R(h_1)=0 \quad \text{for}\,\, i=0,1.
\end{align*}
Therefore, inserting \eqref{eq:Taylor-Smoothing} into \eqref{eq:Rewritten-Smoothing} gives that the constant term contributes $G_y(m)$ only, the linear term vanishes, and for the quadratic remainder, using Lemma \ref{lem:smooth-lattice-kernel},
\begin{align*}
\sum_{h\in\mathbb Z^2}
\|h\|^2
|T_R(h_0)T_R(h_1)|
&=
2
\left(
\sum_{j\in\mathbb Z}j^2|T_R(j)|
\right)
\left(
\sum_{j\in\mathbb Z}|T_R(j)|
\right) \\
&\ll
R^2.
\end{align*}
Therefore the total remainder is $O(R^2y^{-2})$, as claimed.
\end{proof}
We now combine the preceding results to complete the low-frequency estimate. By Proposition \ref{prop:truncation-main},
\[
L_x^{\nu_0,\nu_1}(\mathbf m)
=
\widetilde L_x^{\nu_0,\nu_1}(\mathbf m)
+
O\left(\frac{1}{(\log_2 x)\sqrt R}\right).
\]
On the other hand, applying Proposition \ref{prop:low-sieve-transfer} with $F(a_0,a_1)
:= T_R(a_0-m_0)T_R(a_1-m_1)$,
for which \(\|F\|_\infty\ll R^{-2}\), together with the estimate \eqref{eq:preconcentration-estimate} and Lemma \ref{lem:smooth-lattice-kernel} gives
\begin{align}
\widetilde L_x^{\nu_0,\nu_1}(\mathbf m)
&=
\sum_{a\in\mathbb Z^2}
T_R(a_0-m_0)T_R(a_1-m_1)G_{\log_2 x}(a) \\
& \notag\quad+
O\!\left(
(\log R)(\log_2 x)^{-3/2}
+
R^{-2}e^{-cR^{1/4}}
\right).
\end{align}
Finally, using Lemma \ref{lem:Gaussian-Smoothing} with $y=\log_2 x$, we conclude that
\begin{align}
L_x^{\nu_0,\nu_1}(\mathbf m)
&=
G_{\log_2 x}(\mathbf m) \notag\\
& +
O\left(
\frac{1}{(\log_2 x)\sqrt R}
+
(\log R)(\log_2 x)^{-3/2}
+
R^2(\log_2 x)^{-2}
+
R^{-2}e^{-cR^{1/4}}
\right).
\end{align}
We recall one more time that $R = (\log_2X)^{1/4}$ and $ \log_2 x \asymp \log_2 X$ for \(x\in I_X\). Therefore, handling the error term yields 
\[
L_x^{\nu_0,\nu_1}(\mathbf m)
=
G_{\log_2 x}(\mathbf m)
+
O\!\left((\log_2 X)^{-9/8}\right), 
\]
which implies the required low-frequency estimate with \(c_2=1/8\).

\section{Dynamical Results}
\label{sec:dynamical}

In this section we prove Theorem \ref{thm:joint-dynamical-limit}. To that end, we recall the statement of the theorem. Namely, we are given the set $\mathcal{X}$ obtained from Theorem \ref{thm:intro-main}. We want to show that for a uniquely ergodic $\N^2$-system, denoted by $(Y, \mu, T_0, T_1$), for every $F \in C(Y)$
\begin{align}
\label{eq:main-joint-dynamics}
\max_{\nu_0,\nu_1\in\{\omega,\Omega\}}
\sup_{y\in Y}
\left|
\frac1N\sum_{n\le N}
F\bigl(T_0^{\nu_0(n)}T_1^{\nu_1(n+1)}y\bigr)
-\int_Y F\,d\mu
\right|\longrightarrow 0,
\end{align}
as $N \to \infty$ with $N \in \mathcal{X}$. Additionally, we aim to show 
\begin{align}
\label{eq:main-joint-log-dynamics}
\max_{\nu_0,\nu_1\in\{\omega,\Omega\}}
\sup_{y\in Y}
\left|
\frac1{\log N}\sum_{n\le N}
\frac{F\bigl(T_0^{\nu_0(n)}T_1^{\nu_1(n+1)}y\bigr)}{n}
-\int_Y F\,d\mu
\right|\longrightarrow 0
\end{align}
as $N\to\infty$.

Our proof follows the strategy described in \cite[Remark 1.3]{BergelsonRichter2022}. We first rewrite the average in \eqref{eq:main-joint-dynamics} as weighted orbit average. Namely, notice that
\begin{align}
    \frac1N\sum_{n\le N}
F\bigl(T_0^{\nu_0(n)}T_1^{\nu_1(n+1)}y\bigr) = \sum_{\mathbf{m} \in \N_0^2} P_N^{\nu_0, \nu_1}(\mathbf{m})F(T_0^{m_0}T_1^{m_1}y)
\end{align}
where the weights are precisely the normalized joint counts of $\nu_0(n)$ and $\nu_1(n+1)$ for $\nu_0,\nu_1\in\{\omega,\Omega\}$. Our local limit theorem allows us to
show that these weights become asymptotically invariant, in
$\ell^1(\mathbb Z^2)$, under a unit shift in either coordinate.
Consequently, every weak-$*$ limit of the associated orbit measures is invariant under both coordinate transformations. Joint unique ergodicity allows us to conclude our result.  Finally, partial summation and the logarithmic
density-one property of $\mathcal X$ yield the logarithmically averaged conclusion in \eqref{eq:main-joint-log-dynamics}.

The necessary background needed for the proof is recorded in Section~\ref{subsec:dyn-preliminaries}, and the proof is given in
Section~\ref{subsec:dyn-proof}.

\subsection{Preliminary Results}\label{subsec:dyn-preliminaries}

We first recall some standard notions and results concerning
uniquely ergodic topological dynamical systems, weak-$*$ convergence, pushforward measures and total variation. For a more detailed background, we refer to \cite{EinsiedlerWard2011, Walters1982}.

Let $Y$ be a compact metric space, and let $T_0,T_1:Y\to Y$ be
two commuting continuous maps. For $\mathbf m=(m_0,m_1)\in\mathbb N^2$, define $T^{\mathbf m}:=T_0^{m_0}\circ T_1^{m_1}$. By commutativity, $T^{\mathbf m+\mathbf k}=T^{\mathbf m}\circ T^{\mathbf k}$, so these maps define an action of the additive
semigroup $\mathbb N^2$ on $Y$. We denote by $\mathcal P(Y)$ the
set of Borel probability measures on $Y$. A Borel probability measure $\mu$ on $Y$ is invariant under this action if it is invariant under both $T_0$ and $T_1$, that is, if $\int_Y F\circ T_i\,d\mu=\int_Y F\,d\mu$, with $i \in \{0, 1\}$, for every $F\in C(Y)$. A straightforward adaptation of the Krylov–Bogolyubov argument to the action of $\N^2$ shows that there exists at least one such Borel probability measure on $Y$ (cf.  \cite[Theorem 4.1 and Corollary 4.2]{EinsiedlerWard2011}). We call the action uniquely ergodic if there is exactly one Borel probability measure invariant under both transformations. In this case, we
write $(Y,\mu,T_0,T_1)$ for the system.


We collect some facts regarding weak$*$ convergence and pushforward measures. For $y\in Y$, we denote by $\delta_y$ the Dirac probability measure, which is characterized
by $\int_Y F\,d\delta_y=F(y)$ with $F \in C(Y)$. We say that a sequence $(\eta_j)\subset\mathcal P(Y)$ converges weak-$*$ to
$\eta\in\mathcal P(Y)$, written
$\eta_j\stackrel{*}{\rightharpoonup}\eta$, if
\begin{equation}\label{eq:dyn-weakstar-definition}
 \int_Y F\,d\eta_j\longrightarrow\int_Y F\,d\eta
 \qquad\text{for every }F\in C(Y).
\end{equation}
We record the following standard result \cite[Theorem~B.11(2)--(4)]{EinsiedlerWard2011}. 
\begin{proposition}
\leavevmode
\label{prop:dyn-measure-compactness}
\begin{enumerate}
    \item Continuous functions distinguish Borel probability measures on $Y$, i.e., for any $\mu_1, \mu_2 \in \mathcal P(Y)$,
    \begin{align}
        \mu_1=\mu_2
\quad\Longleftrightarrow\quad
\int_Y F\,d\mu_1=\int_Y F\,d\mu_2
\qquad\text{for every }F\in C(Y).
    \end{align}
    \item The weak-$*$ topology on $\mathcal P(Y)$ is compact and metrizable. By sequential compactness, every sequence in $\mathcal P(Y)$ has a weak-$*$ convergent subsequence whose limit is again a probability measure.
\end{enumerate}
\end{proposition}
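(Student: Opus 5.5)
This proposition is quoted from \cite[Theorem~B.11]{EinsiedlerWard2011}, so one could simply cite it; here is how I would prove it directly, treating $Y$ as a compact metric space throughout. The tools are the Riesz representation theorem and separability of $C(Y)$.

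For part (1), only the direction ``$\Leftarrow$'' needs proof. Suppose $\int_Y F\,d\mu_1=\int_Y F\,d\mu_2$ for all $F\in C(Y)$. First I show that $\mu_1$ and $\mu_2$ agree on closed sets. Fix a closed set $C\subset Y$ and put
\[
F_k(y):=\max\{0,1-k\,d(y,C)\}.
\]
Each $F_k$ is continuous with $0\le F_k\le 1$, and $F_k\downarrow \one_C$ pointwise because $C$ is closed. Dominated convergence then gives $\mu_1(C)=\mu_2(C)$. Closed sets form a $\pi$-system generating the Borel $\sigma$-algebra, so Dynkin's $\pi$--$\lambda$ theorem yields $\mu_1=\mu_2$. (Alternatively, this is the uniqueness part of Riesz representation, since a finite Borel measure on a metric space is regular.)

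For part (2), I would use the Stone--Weierstrass theorem to pick a countable set $\{F_j\}_{j\ge1}$ that is dense in the unit ball of $C(Y)$; this is possible because $Y$ is compact metric, so $C(Y)$ is separable. Define
\[
\rho(\eta,\eta'):=\sum_{j\ge1}2^{-j}\left|\int_Y F_j\,d\eta-\int_Y F_j\,d\eta'\right|.
\]
Then:
\begin{itemize}
\item $\rho$ is a metric on $\mathcal P(Y)$. Positivity follows from part (1) together with density: if $\rho(\eta,\eta')=0$, the integrals agree on a dense set, hence on all of $C(Y)$.
\item $\rho$ metrizes the weak-$*$ topology. If the integrals converge for every $F_j$, an $\varepsilon/3$ argument using density and $\|\eta\|=1$ extends convergence to every $F\in C(Y)$. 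The converse direction is immediate.
\end{itemize}

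Compactness is the main step. Given a sequence $(\eta_n)\subset\mathcal P(Y)$, note that each sequence of integrals $\int_Y F_j\,d\eta_n$ is bounded by $\|F_j\|_\infty$. A diagonal argument produces a subsequence along which $\Lambda(F_j):=\lim\int_Y F_j\,d\eta_n$ exists for every $j$. Density extends $\Lambda$ to a bounded linear functional on $C(Y)$, equal to the limit of $\int_Y F\,d\eta_n$ for every $F$. This $\Lambda$ is positive and satisfies $\Lambda(1)=1$, since both properties pass to the limit. The Riesz representation theorem then gives $\eta\in\mathcal P(Y)$ with $\Lambda(F)=\int_Y F\,d\eta$, so $\eta_n\stackrel{*}{\rightharpoonup}\eta$ along the subsequence. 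Hence $\mathcal P(Y)$ is sequentially compact, and since it is metrizable, it is compact.

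The only subtle point is that the limit is again a \emph{probability} measure, and this is exactly where compactness of $Y$ enters. The constant function $1$ lies in $C(Y)$, so $\Lambda(1)=1$ and no mass can escape.
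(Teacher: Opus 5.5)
The paper gives no argument here. It quotes \cite[Theorem~B.11]{EinsiedlerWard2011} and uses the result as a black box for Lemma~\ref{lem:dyn-uniform-compactness}, so your proof is a correct, self-contained substitute for a citation.

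Part (1), via $\max\{0,1-k\,d(y,C)\}\downarrow\mathbf{1}_C$ and the $\pi$--$\lambda$ theorem, is the standard argument. In part (2), your diagonal extraction followed by Riesz representation replaces the usual appeal to Banach--Alaoglu. That route applies Banach--Alaoglu to the weak-$*$ closed set of positive functionals $\Lambda$ with $\Lambda(1)=1$ inside the unit ball of $C(Y)^*$; yours in effect reproves sequential Banach--Alaoglu for the separable space $C(Y)$. Your version shows exactly where compactness of $Y$ enters, namely $1\in C(Y)$, whereas the citation buys brevity.

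One step should be tightened. For metrizability you only check that $\rho$-convergent and weak-$*$ convergent sequences coincide, and that alone does not identify two topologies. There are two easy fixes:
\begin{enumerate}
\item Run the same $\varepsilon/3$ estimate on basic weak-$*$ neighbourhoods, truncating the tail of $\rho$ for the reverse inclusion.
\item Use that metric topologies are sequential. Your sequential statement then shows every weak-$*$ open set is $\rho$-open. Hence the identity from the compact metric space $(\mathcal P(Y),\rho)$ to $\mathcal P(Y)$ with the weak-$*$ topology, which is Hausdorff by part (1), is a continuous bijection from a compact space onto a Hausdorff space, hence a homeomorphism.
\end{enumerate}
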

For $T:Y\to Y$ continuous and $\mu\in\mathcal P(Y)$, we define the pushforward measure $T_*\mu$ by
\begin{align*}
 (T_*\mu)(E)=\mu(T^{-1}E)
 \qquad(E\subseteq Y\text{ Borel}).
\end{align*}
$T_*\mu$ is again a probability measure and it satisfies, for $F\in C(Y)$ and $\mu \in\mathcal P(Y)$,
\begin{equation}\label{eq:dyn-pushforward-integral}
 \int_Y F\,d(T_*\mu)=\int_Y F\circ T\,d\mu.
\end{equation}
Moreover, the map $\mu\mapsto T_*\mu$ is weak-$*$ continuous. Finally, 
\begin{equation}\label{eq:dyn-invariance-tests}
 T_*\mu=\mu
 \quad\Longleftrightarrow\quad
 \int_Y(F\circ T-F)\,d\mu=0\quad\text{for every }F\in C(Y).
\end{equation}
In particular, $T_*\delta_y=\delta_{Ty}$. These identities and the invariance criterion are standard \cite[Equation~(B.4) and Lemma~B.12]{EinsiedlerWard2011}. 

We record the following result, which isolates the only use of unique ergodicity we will need in the proof of Theorem \ref{thm:joint-dynamical-limit}. We omit the proof as the result is a standard consequence of Proposition \ref{prop:dyn-measure-compactness} and \eqref{eq:dyn-invariance-tests}. 

\begin{lemma}
\label{lem:dyn-uniform-compactness}
Let $(Y,\mu,T_0,T_1)$ be uniquely ergodic. Let $\mathcal A\subseteq\mathbb N$ be unbounded. For each $x\in\mathcal A$, let $\Theta_x$ be a nonempty index set and let $\eta_{x,\theta}\in\mathcal P(Y)$ for
$\theta\in\Theta_x$. Suppose that, for $i=0,1$ and every $F\in C(Y)$,
\begin{equation}\label{eq:dyn-abstract-defect}
 \lim_{\substack{x\to\infty\\x\in\mathcal A}}
 \sup_{\theta\in\Theta_x}
 \left|\int_Y(F\circ T_i-F)\,d\eta_{x,\theta}\right|=0.
\end{equation}
Then, for every $F\in C(Y)$,
\begin{equation}\label{eq:dyn-abstract-conclusion}
 \lim_{\substack{x\to\infty\\x\in\mathcal A}}
 \sup_{\theta\in\Theta_x}
 \left|\int_Y F\,d\eta_{x,\theta}-\int_Y F\,d\mu\right|=0.
\end{equation}
\end{lemma}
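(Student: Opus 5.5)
We argue by contradiction, using the weak-$*$ compactness of $\mathcal P(Y)$ from Proposition~\ref{prop:dyn-measure-compactness}. Suppose the conclusion \eqref{eq:dyn-abstract-conclusion} fails for some $F\in C(Y)$. Then there exist $\varepsilon>0$, a sequence $x_j\in\mathcal A$ with $x_j\to\infty$, and indices $\theta_j\in\Theta_{x_j}$ such that
\[
\left|\int_Y F\,d\eta_{x_j,\theta_j}-\int_Y F\,d\mu\right|\geq\varepsilon
\qquad\text{for all } j.
\]
Such a sequence exists because $\mathcal A$ is unbounded and the supremum exceeds $\varepsilon$ along infinitely many $x\in\mathcal A$; for each such $x$ we pick a near-maximising $\theta$. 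Write $\eta_j:=\eta_{x_j,\theta_j}$.

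By part (2) of Proposition~\ref{prop:dyn-measure-compactness}, after passing to a subsequence we may assume $\eta_j\stackrel{*}{\rightharpoonup}\eta$ for some $\eta\in\mathcal P(Y)$. We next show that $\eta$ is invariant under both $T_0$ and $T_1$. Fix $G\in C(Y)$ and $i\in\{0,1\}$. Since $T_i$ is continuous, $G\circ T_i-G\in C(Y)$, so weak-$*$ convergence gives
\[
\int_Y(G\circ T_i-G)\,d\eta_j\longrightarrow\int_Y(G\circ T_i-G)\,d\eta .
\]
By hypothesis \eqref{eq:dyn-abstract-defect}, the left-hand side is bounded in absolute value by
\[
\sup_{\theta\in\Theta_{x_j}}\left|\int_Y(G\circ T_i-G)\,d\eta_{x_j,\theta}\right|,
\]
which tends to $0$ as $j\to\infty$. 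Hence $\int_Y(G\circ T_i-G)\,d\eta=0$ for every $G\in C(Y)$. By the invariance criterion \eqref{eq:dyn-invariance-tests}, this gives $(T_i)_*\eta=\eta$ for $i=0,1$.

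Unique ergodicity of the action now forces $\eta=\mu$. Weak-$*$ convergence then yields $\int_Y F\,d\eta_j\to\int_Y F\,d\mu$, which contradicts the choice of the sequence. This proves \eqref{eq:dyn-abstract-conclusion}.

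No step is a genuine obstacle. The only points requiring care are the extraction of a diagonal sequence, which lets us handle the supremum over the $x$-dependent index sets $\Theta_x$, and ensuring that the defect hypothesis is applied with the test function $G\circ T_i-G$, which is continuous because $T_i$ is. Part (1) of Proposition~\ref{prop:dyn-measure-compactness} enters only implicitly, through the invariance criterion \eqref{eq:dyn-invariance-tests}.
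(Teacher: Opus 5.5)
Your proof is correct and is exactly the standard argument the paper has in mind when it omits the proof as a consequence of Proposition~\ref{prop:dyn-measure-compactness} and \eqref{eq:dyn-invariance-tests}: pick violating pairs $(x_j,\theta_j)$, extract a weak-$*$ convergent subsequence, and use the defect hypothesis to show the limit is $T_0$- and $T_1$-invariant. Unique ergodicity then identifies the limit as $\mu$, giving the contradiction. One small wording point: no diagonal argument is involved, only one choice of $\theta_j$ for each $x_j$ followed by a single subsequence extraction.
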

\subsection{Proof of Theorem \ref{thm:joint-dynamical-limit}}\label{subsec:dyn-proof}

We first restate the arithmetic input supplied by Theorem \ref{thm:intro-main}. Recall that for $x \geq 1$, $(\nu_0,\nu_1)\in \{\omega,\Omega\}^2$ and $\mathbf m=(m_0,m_1)\in\mathbb Z^2$, we have the normalized joint count 
\begin{align*}
    P_x^{\nu_0, \, \nu_1}(\mathbf m)
 :=\frac1x \cdot \#\{n\le x:\nu_0(n)=m_0,\ \nu_1(n+1)=m_1\}.
\end{align*} It is nonnegative and satisfies $\sum_{\mathbf m\in\mathbb Z^2} P_x^{\nu_0, \, \nu_1}(\mathbf m) = 1$.
Theorem \ref{thm:intro-main} supplies an absolute $c>0$ and a set
$\mathcal X\subseteq\mathbb N$ with
\begin{equation}\label{eq:dyn-good-scales}
 \frac1{\log X}\sum_{\substack{x\le X\\x\notin\mathcal X}}\frac1x
 \ll(\log_2 X)^{-c},
\end{equation}
for $X$ sufficiently large, such that for $x \in \mathcal{X}$,
\begin{equation}\label{eq:dyn-arithmetic-input}
 \max_{\nu_0,\nu_1\in \{\omega,\Omega\}}
 \sup_{\mathbf m\in\mathbb Z^2}
 |P_x^{\nu_0, \, \nu_1}(\mathbf m)-G_{\log_2x}(\mathbf m)|
 \ll (\log_2x)^{-1-c}.
\end{equation}
For an absolutely summable function $w:\mathbb Z^2\to\mathbb C$, i.e. $\sum_{\mathbf m\in\mathbb Z^2}|w(\mathbf m)| < \infty$,
we write
\[
\|w\|_{\ell^1}
:=\sum_{\mathbf m\in\mathbb Z^2}|w(\mathbf m)|.
\]
Moreover, writing $\mathbf e_0=(1,0)$ and
$\mathbf e_1=(0,1)$, we define
\begin{align}
\label{eq:shift-defect}
V_i(w):=
\sum_{\mathbf m\in\mathbb Z^2}
|w(\mathbf m-\mathbf e_i)-w(\mathbf m)|,
\qquad i=0,1.
\end{align}
We need the following elementary estimate regarding the $\ell^1$ distance. 

\begin{lemma}
\label{lem:elem-estimate-shift-defect}
Let $p,q:\mathbb Z^2\to[0,\infty)$ satisfy
\[
\sum_{\mathbf m\in\mathbb Z^2}p(\mathbf m)=1,
\qquad
\sum_{\mathbf m\in\mathbb Z^2}q(\mathbf m)=S<\infty.
\]
Then, for every finite $Q\subseteq\mathbb Z^2$,
\[
\|p-q\|_{\ell^1}
\le
2\sum_{\mathbf m\in Q}|p(\mathbf m)-q(\mathbf m)|
+|1-S|
+2\sum_{\mathbf m \in Q^c}q(\mathbf m).
\]
\end{lemma}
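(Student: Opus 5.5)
We split the $\ell^1$ sum over $Q$ and $Q^c$:
\[
\|p-q\|_{\ell^1}=\sum_{\mathbf m\in Q}|p(\mathbf m)-q(\mathbf m)|+\sum_{\mathbf m\in Q^c}|p(\mathbf m)-q(\mathbf m)|.
\]
The first piece is already bounded by the first term on the right-hand side (even with constant $1$). For the tail we use the triangle inequality $|p-q|\le p+q$ pointwise, which gives
\[
\sum_{\mathbf m\in Q^c}|p(\mathbf m)-q(\mathbf m)|\le\sum_{\mathbf m\in Q^c}p(\mathbf m)+\sum_{\mathbf m\in Q^c}q(\mathbf m).
\]
It then remains to control the tail mass of $p$, about which we know nothing directly except that its total mass is $1$.

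The key step is to convert the tail mass of $p$ into quantities involving $q$, using the normalisations. Since $\sum p=1$ and $\sum q=S$,
\[
\sum_{\mathbf m\in Q^c}p(\mathbf m)=1-\sum_{\mathbf m\in Q}p(\mathbf m)=(1-S)+\sum_{\mathbf m\in Q}\bigl(q(\mathbf m)-p(\mathbf m)\bigr)+\sum_{\mathbf m\in Q^c}q(\mathbf m).
\]
Taking absolute values, this yields
\[
\sum_{\mathbf m\in Q^c}p(\mathbf m)\le|1-S|+\sum_{\mathbf m\in Q}|p(\mathbf m)-q(\mathbf m)|+\sum_{\mathbf m\in Q^c}q(\mathbf m).
\]
Substituting back, the $Q$-differences appear twice, the $q$-tail twice, and $|1-S|$ once, which is exactly the claimed inequality.

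There is no real obstacle here; the only points to watch are that all rearrangements are legitimate, which holds since $p$ and $q$ are nonnegative and summable and $Q$ is finite, and that the bookkeeping of constants matches the stated factors $2,1,2$.
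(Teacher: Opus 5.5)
Your proposal is correct and follows essentially the same route as the paper: you bound the tail mass of $p$ on $Q^c$ via the normalisations by $|1-S|+\sum_{\mathbf m\in Q}|p(\mathbf m)-q(\mathbf m)|+\sum_{\mathbf m\in Q^c}q(\mathbf m)$. You then combine this with $|p-q|\le p+q$ on $Q^c$ and add back the sum over $Q$, exactly as the paper does.
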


\begin{proof}
Since $p$ has total mass $1$,
\begin{align*}
\sum_{\mathbf m\in Q^c}p(\mathbf m)
&\leq |1-S|+\sum_{\mathbf m\in Q^c}q(\mathbf m)+\sum_{\mathbf m\in Q}|p(\mathbf m)-q(\mathbf m)|.
\end{align*}
By nonnegativity and the triangle inequality,
\begin{align*}
\sum_{\mathbf m\in Q^c}|p(\mathbf m)-q(\mathbf m)|
&\le \sum_{\mathbf m\in Q^c}p(\mathbf m)+\sum_{\mathbf m\in Q^c}q(\mathbf m)\\
&\le |1-S|+2\sum_{\mathbf m\in Q^c}q(\mathbf m)+\sum_{\mathbf m\in Q}|p(\mathbf m)-q(\mathbf m)|.
\end{align*}
Adding $\sum_{\mathbf m\in Q}|p(\mathbf m)-q(\mathbf m)|$ on both sides yields the claim.
\end{proof}
The following proposition is  central to the proof of our dynamical theorem, and it is where we use the input obtained from Theorem \ref{thm:intro-main}.

\begin{proposition}
\label{prop:dyn-arithmetic-shifts}
Let $c>0$ be as in \eqref{eq:dyn-arithmetic-input} and put
$\kappa:=\min\{c/2,1/2\}>0$. As $x\to\infty$ along $\mathcal X$,
\begin{equation}\label{eq:dyn-l1-approximation}
 \max_{\nu_0,\nu_1\in \{\omega,\Omega\}}
 \|P_x^{\nu_0, \, \nu_1}-G_{\log_2x}\|_{\ell^1}
 \ll(\log_2x)^{-\kappa}.
\end{equation}
In particular, for $i=0,1$,
\begin{equation}\label{eq:dyn-weight-shifts}
 \max_{\nu_0,\nu_1\in \{\omega,\Omega\}}V_i(P_x^{\nu_0, \, \nu_1})
 \ll(\log_2x)^{-\kappa}.
\end{equation}
\end{proposition}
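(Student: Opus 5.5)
We apply Lemma~\ref{lem:elem-estimate-shift-defect} with $p=P_x^{\nu_0,\nu_1}$ (total mass one, as noted above) and $q=G_{t}$, where $t:=\log_2 x$. For the finite set we take the box $Q=Q(t,r)$ from \eqref{eq:gaussian-dynamical-2}, with $r:=t^{1/2+\kappa/2}$ (any $r=t^{1/2+\epsilon}$ with $\epsilon$ small in terms of $c$ would do). The lemma then bounds $\|P_x^{\nu_0,\nu_1}-G_t\|_{\ell^1}$ by three terms, and we estimate each in turn, uniformly in $(\nu_0,\nu_1)$.

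The first term is the local comparison on $Q$. Since $\#Q\ll r^2=t^{1+\kappa}$, the uniform pointwise bound \eqref{eq:dyn-arithmetic-input} gives
\[
2\sum_{\mathbf m\in Q}|P_x^{\nu_0,\nu_1}(\mathbf m)-G_t(\mathbf m)|\ll t^{1+\kappa}\,t^{-1-c}=t^{\kappa-c}\leq t^{-c/2}\leq t^{-\kappa},
\]
using $\kappa\le c/2$. The second term is $|1-\sum_{\mathbf m}G_t(\mathbf m)|\ll t^{-1/2}\leq t^{-\kappa}$ by \eqref{eq:gaussian-dynamical-1}; this is the reason $\kappa$ is capped at $1/2$. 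The third term is the Gaussian tail outside $Q$, which by \eqref{eq:gaussian-dynamical-2} is $\ll\exp(-r^2/(4t))=\exp(-t^{\kappa}/4)$, negligible. Combining the three bounds proves \eqref{eq:dyn-l1-approximation}.

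For \eqref{eq:dyn-weight-shifts} we use the triangle inequality together with the translation invariance of the $\ell^1$ norm:
\[
V_i(P_x^{\nu_0,\nu_1})\le 2\|P_x^{\nu_0,\nu_1}-G_t\|_{\ell^1}+V_i(G_t).
\]
The first term is controlled by \eqref{eq:dyn-l1-approximation}. The second is $\ll t^{-1/2}$ by \eqref{eq:gaussian-dynamical-3}, so both are $\ll t^{-\kappa}$.

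The main obstacle is choosing the box size. A box of side $r$ contains about $r^2$ points, and the pointwise error $t^{-1-c}$ gives a total of $r^2t^{-1-c}$, so we need $r^2\ll t^{1+c/2}$. At the same time we need $r\gg\sqrt t$ by a power so that the Gaussian tail is small. The choice $r=t^{1/2+\kappa/2}$ satisfies both, since $\kappa\le c/2$ leaves exactly the saving $t^{-c/2}$. This step works because the error in Theorem~\ref{thm:intro-main} saves a power of $t$ beyond the $t^{-1}$ scale of the main term.
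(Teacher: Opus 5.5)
Your proposal is correct and follows the paper's proof essentially step for step. You apply Lemma~\ref{lem:elem-estimate-shift-defect} on a box $Q(t,r)$ with $r=t^{1/2+\delta}$, bound the three resulting terms using \eqref{eq:dyn-arithmetic-input}, \eqref{eq:gaussian-dynamical-1} and \eqref{eq:gaussian-dynamical-2}, and then use $V_i(P_x^{\nu_0,\nu_1})\le 2\|P_x^{\nu_0,\nu_1}-G_t\|_{\ell^1}+V_i(G_t)$. The only difference is cosmetic: the paper takes $\delta=\min\{c/4,1/8\}$, while you take $\delta=\kappa/2=\min\{c/4,1/4\}$, and both choices give the same bounds.
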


\begin{proof}
Fix $\delta:=\min\{c/4,1/8\}>0$ and $r:=(\log_2x)^{1/2+\delta}$. Define the set 
\begin{align*}
    Q(\log_2x,r):=\left\{\mathbf m=(m_0,m_1)\in\mathbb Z^2:
|m_i-\log_2x|\le r,\ i=0,1\right\}
\end{align*}
The interval $[\log_2x-r,\log_2x+r]$ contains at most $2r+2$ integers, so
\[
 \#Q(\log_2x,r)\le(2r+2)^2\ll (\log_2x)^{1+2\delta}.
\]
For $x\in\mathcal X$,
\eqref{eq:dyn-arithmetic-input} gives, uniformly in
$\nu_0, \nu_1 \in \{\omega, \Omega\}$,
\begin{equation}\label{eq:dyn-central-error}
 \sum_{\mathbf m\in Q(\log_2x,r)}
 |P_x^{\nu_0, \nu_1}(\mathbf m)-G_{\log_2x}(\mathbf m)|
 \ll (\log_2x)^{2\delta - c} \leq (\log_2x)^{-c/2}.
\end{equation}
The Gaussian estimates in \eqref{eq:gaussian-dynamical-1} and \eqref{eq:gaussian-dynamical-2} give
\[
 \left|1-\sum_{\mathbf m \in \Z^2}G_{\log_2x}(\mathbf m)\right|\ll (\log_2x)^{-1/2}, \, 
 \sum_{\mathbf m\notin Q(\log_2x,r)}G_{\log_2x}(\mathbf m)
 \ll\exp\left(-\frac14 (\log_2x)^{2\delta}\right).
\]
We apply Lemma \ref{lem:elem-estimate-shift-defect} with
$p=P_x^{\nu_0, \nu_1}$, $q=G_{\log_2x}$, and $Q=Q(\log_2x,r)$ to conclude that
\begin{align}
 \|P_x^{\nu_0, \nu_1}-G_{\log_2x}\|_{\ell^1}
 &\ll (\log_2x)^{-c/2}+(\log_2x)^{-1/2}
        +\exp\left(-\frac14 (\log_2x)^{2\delta}\right)
 \ll (\log_2x)^{-\kappa}.\label{eq:dyn-l1-before-shift}
\end{align}
This proves \eqref{eq:dyn-l1-approximation}.
For $i=0,1$, using the triangle inequality, we obtain:
\begin{align*}
 V_i(P_x^{\nu_0, \nu_1})
 &\le\sum_{\mathbf m\in\mathbb Z^2}
       |P_x^{\nu_0, \nu_1}(\mathbf m-\mathbf e_i)
          -G_{\log_2x}(\mathbf m-\mathbf e_i)|
       +V_i(G_{\log_2x})\\
       & \quad+\sum_{\mathbf m\in\mathbb Z^2}|G_{\log_2x}(\mathbf m)-P_x^{\nu_0, \nu_1}(\mathbf m)|\\
 &=2\|P_x^{\nu_0, \nu_1}-G_{\log_2x}\|_{\ell^1}+V_i(G_{\log_2x}).
\end{align*}
The last equality follows by the change of variables $\mathbf k=\mathbf m-\mathbf e_i$. Finally, using the estimate from \eqref{eq:dyn-l1-before-shift} and the bound given in \eqref{eq:gaussian-dynamical-3}, we conclude that
\[
V_i(P_x^{\nu_0, \nu_1})
\ll (\log_2x)^{-\kappa}+(\log_2x)^{-1/2}
\ll (\log_2x)^{-\kappa},
\]
since $\kappa\le 1/2$. This proves \eqref{eq:dyn-weight-shifts}.
\end{proof}
We now complete the proof of Theorem \ref{thm:joint-dynamical-limit}.

\begin{proof}[Proof of Theorem \ref{thm:joint-dynamical-limit}]
We have the uniquely ergodic $\N^2$-system $(Y, \mu, T_0, T_1$).
For an integer $x\ge1$, $\nu_0, \nu_1 \in \{\omega, \Omega\}$, and $y\in Y$, define the measures
\begin{equation}\label{eq:dyn-orbit-measures}
 \eta_{x,y}^{\nu_0, \nu_1}
 :=\frac1x\sum_{n\le x}
       \delta_{T_0^{\nu_0(n)}T_1^{\nu_1(n+1)}y}
 =\sum_{\mathbf m\in\mathbb N_0^2}
       P_x^{\nu_0, \nu_1}(\mathbf m)\delta_{T^{\mathbf m}y}.
\end{equation}
The sums are finite for $x$ fixed. In particular, for $F \in C(Y)$,
\begin{equation}\label{eq:dyn-average-as-integral}
 \frac1x\sum_{n\le x}
 F\bigl(T_0^{\nu_0(n)}T_1^{\nu_1(n+1)}y\bigr)
 =\int_Y F\,d\eta_{x,y}^{\nu_0, \nu_1}.
\end{equation}
Recall that a finite signed Borel measure is a difference of two
finite positive Borel measures. For such a measure
$\sigma$ on $Y$, we define its total variation norm by
\begin{align*}
\|\sigma\|_{\mathrm{TV}}
:=
\sup_{\|h\|_\infty\le 1}
\left|\int_Y h\,d\sigma\right|,
\end{align*}
where the supremum is taken over all bounded
Borel measurable functions $h:Y\to\mathbb C$. In particular, for any absolutely summable family of real numbers $(a_{\mathbf m})$ and points $z_{\mathbf m}\in Y$, we have
\begin{align}
\label{eq:dyn-atomic-tv}
\left\|
\sum_{\mathbf m \in \Z^2}a_{\mathbf m}\delta_{z_{\mathbf m}}
\right\|_{\mathrm{TV}}
\le
\sum_{\mathbf m \in \Z^2}|a_{\mathbf m}|.
\end{align}
This follows directly from the definition and
the triangle inequality.

We now fix $i\in\{0,1\}$. By commutativity,
$T_iT^{\mathbf m}=T^{\mathbf m+\mathbf e_i}$. The pushforward identity $T_*\delta_y=\delta_{Ty}$ therefore gives
\[
 (T_i)_*\eta_{x,y}^{\nu_0, \nu_1}
 =\sum_{\mathbf m\in\mathbb N_0^2}
       P_x^{\nu_0, \nu_1}(\mathbf m)
       \delta_{T^{\mathbf m+\mathbf e_i}y}
 =\sum_{\mathbf m\in\mathbb N_0^2}
       P_x^{\nu_0, \nu_1}(\mathbf m-\mathbf e_i)
       \delta_{T^{\mathbf m}y}.
\]
The second equality follows by a change of variables, using the fact that $P_x^{\nu_0, \nu_1}$ vanishes outside of
$\mathbb N_0^2$. Subtracting \eqref{eq:dyn-orbit-measures} on both sides gives
\begin{equation}\label{eq:dyn-pushforward-difference}
 (T_i)_*\eta_{x,y}^{\nu_0, \nu_1}-\eta_{x,y}^{\nu_0, \nu_1}
 =\sum_{\mathbf m\in\mathbb N_0^2}
   \bigl(P_x^{\nu_0, \nu_1}(\mathbf m-\mathbf e_i)
         -P_x^{\nu_0, \nu_1}(\mathbf m)\bigr)
     \delta_{T^{\mathbf m}y}.
\end{equation}
By the total variation inequality in \eqref{eq:dyn-atomic-tv}, it follows that
\begin{equation}\label{eq:dyn-orbit-defect-TV}
 \|(T_i)_*\eta_{x,y}^{\nu_0, \nu_1}-\eta_{x,y}^{\nu_0, \nu_1}
       \|_{\mathrm{TV}}
 \le V_i(P_x^{\nu_0, \nu_1}).
\end{equation}
In particular, this gives that for every $F\in C(Y)$,
\begin{equation}\label{eq:dyn-orbit-defect-tests}
 \left|\int_Y(F\circ T_i-F)\,d\eta_{x,y}^{\nu_0, \nu_1}\right|
 \le\|F\|_\infty V_i(P_x^{\nu_0, \nu_1}).
\end{equation}
Proposition~\ref{prop:dyn-arithmetic-shifts} shows that the right-hand side tends to zero uniformly along $\mathcal X$.

It remains to identify the limiting measures using unique ergodicity, which we accomplish by applying Lemma \ref{lem:dyn-uniform-compactness} with $\mathcal A=\mathcal X$, $\Theta_x=\{\omega,\Omega\}^2\times Y$ and the family of measures $\eta_{x,y}^{\nu_0,\nu_1}$. Indeed, by \eqref{eq:dyn-orbit-defect-tests} and Proposition \ref{prop:dyn-arithmetic-shifts},
for every $F\in C(Y)$ and $i=0,1$,
\begin{align*}
&\max_{\nu_0,\nu_1\in\{\omega,\Omega\}}
\sup_{y\in Y}
\left|
\int_Y(F\circ T_i-F)
\,d\eta_{x,y}^{\nu_0,\nu_1}
\right|\le
\|F\|_\infty
\max_{\nu_0,\nu_1\in\{\omega,\Omega\}}
V_i(P_x^{\nu_0,\nu_1})
\longrightarrow 0,
\end{align*}
as $x\to\infty$ along $\mathcal X$.
Thus, the hypotheses of Lemma \ref{lem:dyn-uniform-compactness} are satisfied, and we conclude that
\begin{align}
\max_{\nu_0,\nu_1\in\{\omega,\Omega\}}
\sup_{y\in Y}
\left|
\int_Y F\,d\eta_{x,y}^{\nu_0,\nu_1}
-\int_Y F\,d\mu
\right|
\longrightarrow 0.
\end{align}
Recalling \eqref{eq:dyn-orbit-measures}, this establishes the
first claim asserted in Theorem \ref{thm:joint-dynamical-limit}. The logarithmically averaged conclusion \eqref{eq:intro-joint-log-dynamics} follows by a straightforward argument using partial summation.
\end{proof}
\printbibliography[title={References}]

\end{document}